\DeclareFontFamily{T1}{calligra}{}
\DeclareFontShape{T1}{calligra}{m}{n} {<-> callig15}{}
\newtheorem{thm}{Theorem}[section]
\newtheorem{prop}[thm]{Proposition}
\newtheorem{cor}[thm]{Corollary}
\newtheorem{lem}[thm]{Lemma}
\newtheorem{con}[thm]{Conjecture}
\newtheorem{rem}[thm]{Remark}
\renewenvironment{proof}{\noindent{\em Proof.}}{\hfill $\Box$}
\numberwithin{equation}{section}
\newcommand{\bS}{\mathbf{S}}
\newcommand{\codim}{{\rm codim\,}}
\newcommand{\ov}{\overline}
\newcommand{\B}{\mathcal{B}}
\newcommand{\Z}{\mathcal{Z}}
\newcommand{\X}{\mathcal{X}}
\newcommand{\F}{\mathcal{F}}
\newcommand{\Sl}{\mathfrak{s}\mathfrak{l}}
\newcommand{\nil}{\mathfrak{n}}
\newcommand{\rar}{\rightarrow}
\begin{document}

\title[Singular components of Springer fibers in the two-column case]{Some characterizations of singular components of Springer fibers in the two-column
case}

\author{Lucas Fresse}
\address{Department of Mathematics, the Weizmann Institute of Science, Rehovot 76100, Israel}
\email{lucas.fresse@weizmann.ac.il}
\author{Anna Melnikov}
\address{Department of Mathematics, University of Haifa, Haifa 31905, Israel}
\email{melnikov@math.haifa.ac.il}

\begin{abstract}
Let $u$ be a nilpotent endomorphism of a finite dimensional $\mathbb{C}$-vector space.
The set ${\mathcal F}_u$ of $u$-stable complete flags is a projective algebraic variety called a Springer fiber.
Its irreducible components are parameterized by a set of standard tableaux.
We provide three characterizations of the singular components of ${\mathcal F}_u$ in the case $u^2=0$.
First, we give the combinatorial description of standard tableaux corresponding to singular components.
Second, we prove that a component is singular if and only if its Poincar\'e polynomial is not palindromic.
Third, we
show that a component is singular when it has too many intersections of codimension one
with other components.
Finally, relying on the second criterion,
we infer that, for $u$ general, whenever ${\mathcal F}_u$ has a singular component,
it admits a component whose Poincar\'e polynomial is not palindromic.
This work relies on a previous criterion of singularity for components of ${\mathcal F}_u$ in the case $u^2=0$
by the first author and on
the description of the $B$-orbit decomposition of orbital varieties of nilpotent order two by the second author.

{\em Keywords.}
Flag varieties, Springer fibers, Young tableaux, link patterns, singularity criteria, Poincar\'e polynomial

\subjclass{ 14M15 (primary), 05E10, 20C08 (secondary)}

\end{abstract}

\maketitle

\section{Introduction}
\subsection{Springer fibers and singularity of their irreducible components}
\label{introduction-1}
Let $V$ be a $\mathbb{C}$-vector space of dimension $n\geq 0$
and let $u:V\rightarrow V$ be a nilpotent endomorphism.
Let ${\mathcal F}$ be the set of complete flags, i.e. maximal chains of subspaces
$(0=V_0\subset V_1\subset ...\subset V_n=V)$. The set ${\mathcal F}$ is a
projective algebraic variety, called the {\em flag variety}.
Let ${\mathcal F}_u$ be the subset of $u$-stable complete flags, i.e. flags
$(V_0,...,V_n)$ such that $u(V_i)\subset V_i$ for every $i$.
The set ${\mathcal F}_u$ is a closed subvariety of ${\mathcal F}$. It is called
{\em Springer fiber}, since it can be identified with the fiber over $u$ of the Springer resolution
(see \cite{Springer1}, \cite{Springer2}).

Obviously, the variety ${\mathcal F}_u$ depends only on the Jordan form of $u.$
It is reducible and singular unless $u$ is zero or regular,
but the irreducible components of ${\mathcal F}_u$ can be singular or smooth.
Up to now, the problem to determine, for a given $u$, all the singular components of ${\mathcal F}_u$
is  solved only  in a few special cases.
All the components of ${\mathcal F}_u$ are smooth in three cases:
if $u$ has only one nontrivial Jordan block (Vargas, cf. \cite{V}), if $u$ has only two Jordan blocks
(Fung, cf. \cite{Fung}), if $u$ has two nontrivial and one trivial Jordan blocks (Fresse-Melnikov, cf. \cite{F-M}).
In all other case (for $n>6$) ${\mathcal F}_u$ has some singular components. However only in the case $u^2=0$
a necessary and sufficient condition
of singularity for components is given (cf. \cite{F}, or \ref{previous-criterion} below).
In this article, we mainly concentrate on the case $u^2=0$,
for which we give three new characterizations of
the singular components of ${\mathcal F}_u$.

\subsection{Parametrization of the irreducible components of ${\mathcal F}_u$ by standard Young tableaux}
\label{parameterization-components}
Let $\lambda(u)=(\lambda_1\geq...\geq \lambda_r)$ be the sizes of the Jordan blocks
of $u$, and let the Young diagram $Y(u)$, or $Y_\lambda$,
 be an array of $r$  rows
of boxes starting on the left, with the $i$-th row containing
$\lambda_i$ boxes.
Since $(\lambda_1,...,\lambda_r)$ is a partition of $\mathrm{dim}\,V=n$, the Young diagram $Y(u)$ contains $n$ boxes.
Let $\lambda(u)^*=(\mu_1,...,\mu_s)$ denote the  conjugate partition, that is the list of the lengths
of the columns in $Y(u)$.
By \cite[\S II.5.5]{Spaltenstein}, the dimension of ${\mathcal F}_u$
is given by the formula
\begin{eqnarray}\label{eq1}
\mathrm{dim}\,{\mathcal F}_u=\sum_{i=1}^s\frac{\mu_i(\mu_i-1)}{2}.
\end{eqnarray}
In the case $u^2=0$, which we consider in this article, the diagram $Y(u)$ has two columns,
thus $\mathrm{dim}\,{\mathcal F}_u=\frac{1}{2}\mu_1(\mu_1-1)+\frac{1}{2}\mu_2(\mu_2-1)$.

Given a Young diagram $Y(u)$, fill in its boxes with the numbers
$1,\ldots,n$ in such a way that the entries increase in rows from
left to right and in columns from top to bottom. Such an array is
called  a {\em standard Young tableau} or simply a {\em standard tableau} of shape $Y(u)$.


Following \cite{Spaltenstein}, we introduce a parametrization
of the components of ${\mathcal F}_u$ by standard tableaux of shape $Y(u)$.
For a standard tableau $T$, for $i=1,...,n$, let $T_i$ be the subtableau of $T$
containing the entries $1,...,i$ and let $Y_i^T\subset Y(u)$ be the shape of $T_i.$
Let $F=(V_0,...,V_n)\in{\mathcal F}_u$, then for $i=1,...,n$,
the subspace $V_i$ is $u$-stable, thus, we can consider the restriction
map $u_{|V_i}:V_i\rightarrow V_i$ which is a nilpotent endomorphism.
Its Jordan form is represented by a Young diagram $Y(u_{|V_i})$,
which is a subdiagram of $Y(u)$.
Set
\[{\mathcal F}_u^T=\{(V_0,...,V_n)\in{\mathcal F}_u:Y(u_{|V_i})=Y_i^T\ \ \forall i=1,...,n\}.\]
We get a partition ${\mathcal F}_u=\bigsqcup_{T}{\mathcal F}_u^T$ parameterized
by the standard tableaux of shape $Y(u)$. By \cite[\S II.5.4--5]{Spaltenstein},
for each $T$, the set ${\mathcal F}_u^T$ is a locally closed, irreducible
subset of ${\mathcal F}_u$, and $\mathrm{dim}\,{\mathcal F}_u^T=\mathrm{dim}\,{\mathcal F}_u$.
Define ${\mathcal K}^T=\overline{{\mathcal F}_u^T}$, to be  the  closure in Zariski topology.
Then ${\mathcal K}^T$, for $T$ running over the set of standard tableaux
of shape $Y(u)$, are the irreducible components of ${\mathcal F}_u$.
Moreover, we have $\mathrm{dim}\,{\mathcal K}^T=\mathrm{dim}\,{\mathcal F}_u$ for every $T$.

\medskip

In the sequel, we suppose that $Y(u)$ has two columns of lengths $(n-k,k)$.

\subsection{A basic criterion for the singularity of a component}
\label{previous-criterion}
We recall from \cite{F} a necessary and sufficient condition of singularity
for the components of ${\mathcal F}_u$.

We call {\em row-standard tableau} an array obtained from $Y(u)$
by filling in its boxes with the numbers $1,\ldots,n$ in such a way that the entries increase in rows from left to right.
We associate a flag $F_\tau$ to each row-standard tableau $\tau$.
To do this, we fix a Jordan basis $(e_1,...,e_n)$ of $u$, such that $u(e_i)=0$ for $i=1,...,n-k$
and $u(e_i)=e_{i-n+k}$ for $i=n-k+1,...,n$.
We rely on the particular tableau $\tau_0$ of shape $Y(u)$ numbered from top to bottom
with the entries $1,...,n-k$ in the first column and the entries $n-k+1,...,n$ in the second column.
For a permutation $w\in\mathbf{S}_n$, let $w\tau_0$ be the tableau obtained from $\tau_0$ after replacing
each entry $i$ by $w(i)$. For $\tau$ row-standard, let $w_\tau\in\mathbf{S}_n$ be such
that $\tau=w_\tau^{-1}\tau_0$. Then let $F_\tau=(V_0,\ldots,V_n)$ be the flag defined by
$V_i=\langle e_{w_\tau(1)},\ldots,e_{w_\tau(i)}\rangle$. Thus $F_\tau\in{\mathcal F}_u$.
For $T$ standard, a characterization of row-standard tableaux $\tau$ such that $F_\tau\in{\mathcal K}^T$
is given in \cite[\S 2.3]{F}.

Let $X(\tau_0)$ denote the set of row-standard tableaux obtained from
$\tau_0$ by interchanging two entries $i,j$ with $i\leq n-k$.
By  \cite[Theorem 3.1]{F} one has

\begin{thm}\label{previous-crit}
Suppose that $Y(u)$ has two columns of lengths $(n-k,k)$.
Let $T$ be a standard tableau of shape $Y(u)$.
The component ${\mathcal K}^T$ is singular if and only if $|\{\tau\in X(\tau_0):F_\tau\in{\mathcal K}^T\}|>\frac{1}{2}(n-k)(n-k-1)$.
\end{thm}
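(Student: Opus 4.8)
The plan is to reduce the global smoothness of the irreducible variety $\mathcal{K}^T$ to the dimension of a single Zariski tangent space, and then to compute that dimension combinatorially. First I would put a $\mathbb{C}^*$-action on $\mathcal{F}_u$: choose a cocharacter $\gamma\colon\mathbb{C}^*\to GL(V)$ acting diagonally on the Jordan basis, $\gamma(t)e_i=t^{d_i}e_i$, with the $d_i$ generic subject to $d_i=d_{i-n+k}+1$ for $i>n-k$, so that $\gamma(t)u\gamma(t)^{-1}=t\,u$. Then $\gamma(\mathbb{C}^*)$ preserves $\mathcal{F}_u$, hence (being connected) each component $\mathcal{K}^T$, and for generic $d_i$ the $\gamma(\mathbb{C}^*)$-fixed points of $\mathcal{F}$ are the coordinate flags, so those of $\mathcal{F}_u$ are exactly the $F_\tau$ with $\tau$ row-standard. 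I would use $F_{\tau_0}$ as a test point: it lies in $\mathcal{F}_u^{\tau_0}$, and from the characterization of the row-standard $\tau$ with $F_\tau\in\mathcal{K}^T$ recalled in \cite[\S2.3]{F} one checks that $\tau_0$ always qualifies, i.e.\ $F_{\tau_0}\in\mathcal{K}^T$ for every standard $T$. Since the singular locus of $\mathcal{K}^T$ is closed and $\gamma(\mathbb{C}^*)$-stable, it is non-empty if and only if it contains a $\gamma(\mathbb{C}^*)$-fixed point; a separate argument (see the last paragraph) shows it must then contain $F_{\tau_0}$. Thus $\mathcal{K}^T$ is singular if and only if $\dim T_{F_{\tau_0}}\mathcal{K}^T>\dim\mathcal{F}_u=\binom{n-k}{2}+\binom{k}{2}$.

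Next I would produce tangent directions at $F_{\tau_0}$ from $\gamma(\mathbb{C}^*)$-stable rational curves. If $\tau\in X(\tau_0)$ is obtained from $\tau_0$ by interchanging entries $p<q$ with $p\leq n-k$, then $F_{\tau_0}$ and $F_\tau$ are the two $\gamma(\mathbb{C}^*)$-fixed points of a $\gamma(\mathbb{C}^*)$-stable curve $C_\tau\cong\mathbb{P}^1$, obtained by rotating $e_p$ into $e_q$ in the steps $p\leq m<q$; the hypothesis $p\leq n-k$, which makes $u(e_p)=0$, together with the row-standardness of $\tau$, is precisely what keeps $C_\tau$ inside $\mathcal{F}_u$, and this is the reason for the restriction in the definition of $X(\tau_0)$. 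The tangent line to $C_\tau$ at $F_{\tau_0}$ is a $\gamma$-weight line, and the key point is that it lies in $T_{F_{\tau_0}}\mathcal{K}^T$ exactly when $F_\tau\in\mathcal{K}^T$. Since these weight lines are pairwise distinct, together they span a subspace of $T_{F_{\tau_0}}\mathcal{K}^T$ of dimension $N_T:=|\{\tau\in X(\tau_0):F_\tau\in\mathcal{K}^T\}|$.

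To finish, I would show that the remaining tangent directions at $F_{\tau_0}$ contribute a number $c$ that is independent of $T$ and equal to $\dim\mathcal{F}_u-\binom{n-k}{2}=\binom{k}{2}$; concretely they come from the $\binom{k}{2}$ further $\gamma(\mathbb{C}^*)$-stable curves in $\mathcal{F}_u$ obtained by interchanging two entries of the second column of $\tau_0$, which sweep out, through $F_{\tau_0}$, a smooth subvariety of $\mathcal{K}^T$ essentially isomorphic to the flag variety of $V/\ker u\cong\mathbb{C}^k$ and present for every $T$. Granting the resulting identity $\dim T_{F_{\tau_0}}\mathcal{K}^T=N_T+\binom{k}{2}$, the inequality $\dim T_{F_{\tau_0}}\mathcal{K}^T\geq\dim\mathcal{K}^T$ gives $N_T\geq\binom{n-k}{2}$, with equality exactly when $\mathcal{K}^T$ is smooth at $F_{\tau_0}$; combined with the reduction of the first paragraph, $\mathcal{K}^T$ is singular if and only if $N_T>\binom{n-k}{2}$, which is the assertion.

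The hard part is the two inputs treated above as black boxes. One is the exact identity $\dim T_{F_{\tau_0}}\mathcal{K}^T=N_T+\binom{k}{2}$: showing that no unexpected tangent direction occurs, and that the curves $C_\tau$ land in $\mathcal{K}^T$ precisely for the $\tau$ counted by $N_T$, requires an explicit local model of $\mathcal{K}^T$ at $F_{\tau_0}$ and uses the combinatorics of \cite[\S2.3]{F} in an essential way. The other is that $F_{\tau_0}$ is the ``worst'' fixed point, i.e.\ $\mathcal{K}^T$ is smooth as soon as it is smooth at $F_{\tau_0}$; the natural route is to provide, at each $\gamma(\mathbb{C}^*)$-fixed point $F_\tau\in\mathcal{K}^T$, a $\gamma(\mathbb{C}^*)$-equivariant neighbourhood in $\mathcal{K}^T$ that is, up to a smooth factor, isomorphic to a neighbourhood of the analogous point in a component of a two-column Springer fiber of smaller dimension, thereby reducing the statement to an induction on $n$. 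These two points carry the weight of the proof; the rest is the bookkeeping sketched above.
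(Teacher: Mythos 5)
A point of order first: the paper does not prove this statement. It is quoted verbatim from the first author's earlier work (\cite[Theorem 3.1]{F}), so there is no internal proof to compare yours against; your proposal must be judged as a reconstruction of the argument of \cite{F}. Your overall strategy --- a one-parameter torus contracting $\mathcal{F}_u$ onto the coordinate flags $F_\tau$, reduction of smoothness of $\mathcal{K}^T$ to smoothness at the single point $F_{\tau_0}$, and a count of tangent directions at $F_{\tau_0}$ indexed by the adjacent fixed points $F_\tau$, $\tau\in X(\tau_0)$ --- is the right shape and close to what \cite{F} actually does. One simplification you miss: the reduction to $F_{\tau_0}$ does not require the equivariant-neighbourhood induction you sketch in your last paragraph. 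The singular locus of $\mathcal{K}^T$ is closed and $Z(u)$-stable, every $Z(u)$-orbit closure in $\mathcal{F}_u$ contains the minimal orbit $\mathcal{Z}_{\sigma_0}$ (Section 2.3 of the present paper, with no circularity), and $F_{\tau_0}\in\mathcal{Z}_{\sigma_0}$ since $\sigma(\tau_0)=\sigma_0$; hence $\mathcal{K}^T$ is singular if and only if it is singular at $F_{\tau_0}$.

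The genuine gap is that the two steps you yourself label as black boxes are the entire content of the theorem, and one intermediate claim you present as routine is not. First, the equivalence ``the tangent line to $C_\tau$ at $F_{\tau_0}$ lies in $T_{F_{\tau_0}}\mathcal{K}^T$ exactly when $F_\tau\in\mathcal{K}^T$'' does not follow from the fact that both fixed points of $C_\tau$ lie in $\mathcal{K}^T$: a $\mathbb{C}^*$-stable curve in $\mathcal{F}_u$ has its generic point in some stratum $\mathcal{F}_u^S$, and nothing forces $S=T$ just because the two endpoints lie in $\mathcal{K}^T$, nor is the converse direction (tangent line in $T_{F_{\tau_0}}\mathcal{K}^T$ implies $F_\tau\in\mathcal{K}^T$) automatic. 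Second, and more seriously, the upper bound $\dim T_{F_{\tau_0}}\mathcal{K}^T\le N_T+\binom{k}{2}$ is precisely what turns the argument into an equivalence; without it your construction only yields a lower bound on the tangent space and hence only the implication ``$N_T>\binom{n-k}{2}$ implies $\mathcal{K}^T$ singular.'' In \cite{F} this upper bound comes from an explicit affine local model of $\mathcal{K}^T$ at $F_{\tau_0}$ (explicit equations in a chart), not from the curve count. As written, your text is a plausible plan whose proof is deferred exactly at its two critical points, so it cannot be accepted as a proof.
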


Our  three new characterizations of singular components of ${\mathcal F}_u$ are based on this criterion.
The first one, purely combinatorial, simplifies drastically  the criterion above,
the other two involve remarkable properties.

\subsection{Combinatorial criterion of singularity}
\label{first-criterion}
Let $\mathbf{S}_n^2$ be the set of involutions in the symmetric group $\mathbf S_n$, that is  $\bS_n^2=\{\sigma\in\mathbf{S}_n\ :\ \sigma^2=I\}$,
and let $\mathbf{S}_n^2(k)\subset\mathbf{S}_n^2$ be the subset of permutations
which are product of $k$ pairwise disjoint transpositions, that is any $\sigma\in\mathbf{S}_n^2(k)$ can be written
(in a cyclic form) as
$\sigma=(i_1,j_1)\cdots (i_k,j_k)$ where $i_p<j_p$ for every $p=1,...,k$ and $\sigma(i_p)=j_p$.
Moreover this factorization is unique
up to the order of the factors. For $i\ :\ 1\leq i\leq n$ we call $i$ an end point of $\sigma$ if $\sigma(i)\ne i$,
   we call $i$ a fixed point of $\sigma$ if  $\sigma(i)=i$.
For $i,j\ :\ 1\leq i<j\leq n$ we write $(i,j)\in \sigma$ if $\sigma(i)=j$ and we write $(i,j)\notin \sigma$ otherwise.

Let $T$ be a standard tableau of shape $Y(u)$. We associate
the involution $\sigma_T\in \mathbf{S}_n^2(k)$ to $T$ by the following procedure.
Let $a_1<...<a_{n-k}$ (resp. $j_1<...<j_k$) be the entries in the first
(resp. second) column of $T$.
Put $\sigma_T=(i_1,j_1)\cdots(i_k,j_k)$ where $i_1=j_1-1$
and $i_p=\mathrm{max}\{a\in\{a_1,...,a_{n-k}\}\setminus \{i_1,...,i_{p-1}\}:a<j_p\}$
for $p=2,...,k$.

For $i=1,...,n$, let $c_T(i)\in\{1,2\}$ be the index of the column of  $T$
containing $i$. Write $\tau^*(T)=\{i\in\{1,...,n-1\}\ :\ c_T(i)<c_T(i+1)\}$.
Let $|\tau^*(T)|$ be the cardinality of $\tau^*(T)$.

\medskip \noindent
{\em Example.} Let $T=\mbox{\scriptsize $\young(14,26,37,5,8)$}$. Then $\sigma_T=(3,4)(5,6)(2,7)$.
Thus $2,3,4,5,6,7$ are the end points of $\sigma_T$, and $1,8$ are the fixed points.
We have $\tau^*(T)=\{3,5\}$.

\medskip
Our first criterion gives an explicit description of tableaux $T$
for which the component ${\mathcal K}^T$ is singular.

\begin{thm}\label{first-crit} Suppose that $Y(u)$ has two columns. Let $T$ be a standard tableau of shape $Y(u)$.
Let ${\mathcal K}^T\subset {\mathcal F}_u$ be the irreducible component
associated to $T$. \\
(a) If $|\tau^*(T)|=1$, then ${\mathcal K}^T$ is smooth. \\
(b) If $|\tau^*(T)|=2$, then ${\mathcal K}^T$ is smooth if and only if at least one of
$\{1,n\}$ is an end point of $\sigma_T$. \\
(c) If $|\tau^*(T)|=3$, then ${\mathcal K}^T$ is smooth if and only if both
$1$ and $n$ are end points of $\sigma_T$ and $(1,n)\notin\sigma_T$. \\
(d) If $|\tau^*(T)|\geq 4$, then ${\mathcal K}^T$ is singular.
\end{thm}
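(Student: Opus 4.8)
The plan is to reduce Theorem~\ref{first-crit} to the numerical criterion of Theorem~\ref{previous-crit} by making explicit which tableaux $\tau\in X(\tau_0)$ satisfy $F_\tau\in\mathcal{K}^T$, counting them, and comparing with the threshold $\frac12(n-k)(n-k-1)$. First I would set up the dictionary between the combinatorial data attached to $T$ and the geometry: the involution $\sigma_T\in\mathbf{S}_n^2(k)$ encodes the closure $\mathcal{K}^T$ (via the corresponding orbital variety of nilpotent order two, as in Melnikov's work cited in the abstract), and the set $\tau^*(T)$ records the descents between the two columns. A tableau $\tau\in X(\tau_0)$ is obtained from $\tau_0$ by swapping two entries $i<j$ with $i\le n-k$; I would first write down, using the characterization of $\{\tau : F_\tau\in\mathcal{K}^T\}$ from \cite[\S2.3]{F}, a clean necessary-and-sufficient condition on the pair $(i,j)$ for $F_\tau\in\mathcal{K}^T$, phrased purely in terms of $\sigma_T$ (e.g. in terms of whether $i,j$ are end points or fixed points of $\sigma_T$, and of the relative position of the arc containing them). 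This is the combinatorial heart of the argument, and I expect it to be the main obstacle: translating the flag-theoretic membership condition into a transparent statement about arcs of $\sigma_T$ requires care, and one must handle the two columns of $Y(u)$ asymmetrically because the swap is constrained by $i\le n-k$.

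Once that local condition is available, the count $N(T):=|\{\tau\in X(\tau_0):F_\tau\in\mathcal{K}^T\}|$ becomes a sum over admissible pairs $(i,j)$, and I would organize it according to how $i$ and $j$ relate to the arcs of $\sigma_T$: both fixed, one fixed and one an end point, both end points in the same arc, both end points in different arcs, and so on. The key observation I expect to isolate is that the ``excess'' $N(T)-\frac12(n-k)(n-k-1)$ is a function only of $|\tau^*(T)|$ and of the small set of boolean data appearing in the statement — namely whether $1$ is an end point, whether $n$ is an end point, and whether $(1,n)\in\sigma_T$. Concretely I would show that passing from a generic configuration to one where $1$ (or $n$) becomes a fixed point, or where the arc $(1,n)$ appears, shifts $N(T)$ by a controlled amount, so that the inequality $N(T)>\frac12(n-k)(n-k-1)$ flips exactly at the thresholds described in (a)--(d). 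The cases $|\tau^*(T)|=1,2,3$ are then finite case checks on these booleans, and $|\tau^*(T)|\ge4$ follows because the excess grows (at least quadratically) with $|\tau^*(T)|$ and is already strictly positive at $|\tau^*(T)|=4$ regardless of the booleans.

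For the monotonicity step I would argue as follows: increasing $|\tau^*(T)|$ by one corresponds to a local modification of the column-pattern $c_T$, which in turn enlarges the family of admissible swaps $(i,j)$ landing in $\mathcal{K}^T$; I would exhibit enough new admissible pairs to guarantee that $N(T)$ strictly exceeds the threshold once $|\tau^*(T)|\ge4$. A convenient way to make this rigorous is to first verify it on the smallest tableaux realizing each value of $|\tau^*(T)|$ (those with $n$ as small as possible, e.g. $n=2|\tau^*(T)|$ or nearby), and then to show that adding a fixed point, or lengthening an existing arc, never decreases the excess $N(T)-\frac12(n-k)(n-k-1)$ — indeed it can only create further admissible swaps. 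Assembling these pieces, parts (a), (b), (c) come out as the three borderline regimes where the excess is $0$ or $\pm$ a correction governed by the endpoints $1$ and $n$, and part (d) is the stable regime; invoking Theorem~\ref{previous-crit} in each case converts the count into the stated smoothness/singularity dichotomy, completing the proof.
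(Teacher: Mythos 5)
Your plan reduces everything---both the smooth and the singular halves---to the swap-count of Theorem \ref{previous-crit}, and the decisive steps are only asserted, not proved. The claim that the excess $N(T)-\frac12(n-k)(n-k-1)$ is ``a function only of $|\tau^*(T)|$ and of the boolean data'' is precisely the hard content of the theorem, and nothing in the proposal establishes it: whether a given swap $(i,j)$ satisfies $\sigma(\tau)\preceq\sigma_T$ (equivalently $R_{a,b}(\sigma(\tau))\le R_{a,b}(\sigma_T)$ for all intervals $[a,b]$) depends on the full nesting structure of the link pattern $P_{\sigma_T}$ --- the sizes and positions of the pyramids of concentric arcs --- not merely on whether $1$ and $n$ are end points and whether $(1,n)\in\sigma_T$. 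Likewise the two monotonicity claims (``adding a fixed point, or lengthening an existing arc, never decreases the excess'') are unproved and delicate: adding a fixed point raises the threshold $\frac12(n-k)(n-k-1)$ as well as enlarging $X(\tau_0)$, and the theorem itself shows the effect cannot be uniformly monotone, since for $|\tau^*(T)|=1$ the component stays smooth no matter how many fixed points are added, while for $|\tau^*(T)|=2$ turning $1$ and $n$ into fixed points flips smoothness. As written, the proposal is a program rather than a proof.

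It is also worth recording that the paper does not carry out this program for the singular direction, and the route it takes instead is genuinely different. For the smooth cases the paper does use Theorem \ref{previous-crit}, but only with a direct count in the case $|\tau^*(T)|=1$; cases (b) and (c) are handled by induction on $n$, using Lemma \ref{observ-2} (deleting the entry $n$ when it lies in the second column can only decrease the relevant count) together with the Sch\"utzenberger duality $T\mapsto T^S$ to assume $n$ is an end point. For the singular cases the swap-count is abandoned entirely: the paper decomposes $\mathcal{F}_u$ into $Z(u)$-orbits $\mathcal{Z}_\sigma$, shows each orbit has a cell decomposition with orbit-independent cell codimensions, deduces that $P^T(x)=N^T(x^2)I(x^2)$ so that symmetry of the Betti numbers of $\mathcal{K}^T$ is equivalent to symmetry of the orbit-dimension distribution, and then proves singularity by showing that $\mathcal{K}^T$ contains more than $\overline{k}$ orbits of codimension one while the whole of $\mathcal{F}_u$ has only $\overline{k}$ orbits of dimension $d_0+1$; smoothness would force equality via Poincar\'e duality. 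If you want to salvage your approach you must either compute $N(T)$ exactly for every link pattern without crossings or bridges (a substantial combinatorial analysis that must track the pyramid sizes $k_1,k_2,k_3,\dots$, as in the paper's Proposition \ref{prop-singular-components}), or replace the unproved monotonicity statements by an inductive scheme such as the one based on Lemma \ref{observ-2}.
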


For example, for $T$  above, we have $|\tau^*(T)|=2$ and both $1,8$
are not end points of $\sigma_T$, hence the component ${\mathcal K}^T$ is singular.

\subsection{Singularity and Betti numbers distribution}
\label{second-criterion}
We characterize the singular components by their cohomology.
We consider the classical sheaf cohomology with rational coefficients.
Let $X$ be an algebraic variety of dimension $d_X$.
Let $H^m(X,\mathbb{Q})$ denote the cohomology space in degree $m$.
We have $H^m(X,\mathbb{Q})=0$ unless $m\in\{0,...,2d_X\}$.
The numbers $\{\mathrm{dim}\,H^m(X,\mathbb{Q})\}_{m=0}^{2d_X}$ are called Betti numbers.
The distribution of Betti numbers is called  symmetric if
$\mathrm{dim}\,H^m(X,\mathbb{Q})=\mathrm{dim}\,H^{2d_X-m}(X,\mathbb{Q})$
for any $ m=0,...,d_X$.
Equivalently, we say that the Poincar\'e polynomial $P_X(x):=\sum_{m=0}^{2d_X}H^{2d_X-m}(X,\mathbb{Q})\,x^m$ is palindromic.
If $X$ is irreducible, smooth and projective,
then, by Poincar\'e duality,  the distribution of Betti numbers
of $X$ is symmetric.
In particular, the distribution of Betti numbers of a smooth component of ${\mathcal F}_u$
is symmetric.
Our second criterion is the following.

\begin{thm}\label{second-crit}
Suppose that $Y(u)$ has two columns. Let $T$ be a standard tableau of shape $Y(u)$.
The component ${\mathcal K}^T\subset {\mathcal F}_u$ is smooth
if and only if the Poincar\'e polynomial of ${\mathcal K}^T$ is palindromic.
\end{thm}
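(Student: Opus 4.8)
The plan is to prove both implications separately, using Theorem 1.3 (the combinatorial criterion) as a black box for the characterization of which tableaux give singular components. The ``only if'' direction is essentially immediate: if $\mathcal{K}^T$ is smooth then, being an irreducible projective variety, it satisfies Poincar\'e duality, so its Betti numbers are symmetric and $P_{\mathcal{K}^T}$ is palindromic. The real content is the converse: if $\mathcal{K}^T$ is singular, then $P_{\mathcal{K}^T}$ is \emph{not} palindromic. By the contrapositive and Theorem 1.3, it suffices to show that whenever $T$ falls into one of the singular cases --- $|\tau^*(T)|\geq 4$, or $|\tau^*(T)|\in\{2,3\}$ with the stated failure of the endpoint condition on $\{1,n\}$ --- the Poincar\'e polynomial of $\mathcal{K}^T$ fails to be palindromic.

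The key step is to compute, or at least control, the Betti numbers of $\mathcal{K}^T$. Since $\mathcal{K}^T = \overline{\mathcal{F}_u^T}$ and the cells $\mathcal{F}_u^{T'}$ partition $\mathcal{F}_u$, the natural approach is to use the fact that $\mathcal{F}_u$ (in the two-column case, $u^2=0$) admits a cell decomposition into affine spaces --- indeed, the whole Springer fiber has a paving by affines, so each component $\mathcal{K}^T$ is paved by those $\mathcal{F}_u^{T'}$ it contains, and hence $H^*(\mathcal{K}^T,\mathbb{Q})$ is concentrated in even degrees with $\dim H^{2i}(\mathcal{K}^T,\mathbb{Q})$ equal to the number of cells $\mathcal{F}_u^{T'}\subset\mathcal{K}^T$ of dimension $d-i$, where $d=\dim\mathcal{F}_u$. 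Thus $P_{\mathcal{K}^T}(x)=\sum_{T':\,\mathcal{F}_u^{T'}\subset\mathcal{K}^T} x^{2\,\mathrm{codim}\,\mathcal{F}_u^{T'}}$, and palindromicity becomes the purely combinatorial assertion that the generating function of codimensions of cells inside $\mathcal{K}^T$ is symmetric about $d/2$. One then uses the combinatorial description from \cite{F} of which row-standard (equivalently, standard) tableaux $T'$ satisfy $\mathcal{F}_u^{T'}\subset\mathcal{K}^T$, together with the known formula for $\mathrm{codim}\,\mathcal{F}_u^{T'}$ (expressible via the associated involution $\sigma_{T'}$ and its crossings/nestings), to evaluate this generating function.

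The heart of the argument, and the main obstacle, is the combinatorial bookkeeping: one must show that the symmetry of $\sum x^{2\,\mathrm{codim}}$ over the poset of cells below $\mathcal{K}^T$ holds precisely in the smooth cases of Theorem 1.3 and fails in the singular cases. I expect the cleanest route is to isolate the \emph{top} of the polynomial --- the coefficient of $x^{2d}$ is always $1$ (the open cell $\mathcal{F}_u^T$ has codimension $0$) --- so palindromicity forces the coefficient of $x^0$, i.e.\ the number of zero-dimensional cells in $\mathcal{K}^T$, to equal $1$ as well; more useful is to compare low-degree coefficients, e.g.\ the coefficient of $x^2$ (codimension-one cells in $\mathcal{K}^T$) against the coefficient of $x^{2d-2}$. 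In fact this connects directly to the \emph{third} criterion of the paper (``too many codimension-one intersections''): the number of codimension-one cells $\mathcal{F}_u^{T'}\subset\mathcal{K}^T$ is what Theorem 1.1 counts, and comparing it with the number of subdiagram steps near the bottom of $\mathcal{F}_u^T$ should already detect non-palindromicity in the singular cases. So the plan is: (i) establish the affine paving and the resulting even-degree, cell-counting formula for $P_{\mathcal{K}^T}$; (ii) translate palindromicity into a symmetry statement for the codimension generating function of cells in $\mathcal{K}^T$; (iii) in each smooth case of Theorem 1.3, exhibit an explicit isomorphism of $\mathcal{K}^T$ with a product of smooth pieces (e.g.\ a product of smaller Springer-fiber components or flag-type varieties) whose Poincar\'e polynomials are manifestly palindromic, or alternatively argue smoothness $\Rightarrow$ palindromic directly; (iv) in each singular case, compute enough coefficients --- realistically the extreme ones and those of degree $2$ and $2d-2$ --- to witness the failure of the symmetry, invoking the count from Theorem 1.1 to handle the delicate boundary cases $|\tau^*(T)|\in\{2,3\}$. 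The hard part will be (iv): controlling the full codimension distribution, rather than just its extremes, in the borderline cases where only the position of $1$ and $n$ among the endpoints of $\sigma_T$ distinguishes smooth from singular.
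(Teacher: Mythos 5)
There is a genuine gap at the foundation of your argument, in step (i). You assert that the Spaltenstein pieces $\mathcal{F}_u^{T'}$ give an affine paving of $\mathcal{F}_u$, that $\mathcal{K}^T$ is paved by ``those $\mathcal{F}_u^{T'}$ it contains,'' and that $\dim H^{2i}(\mathcal{K}^T,\mathbb{Q})$ counts the pieces of dimension $d-i$. None of this is correct: the pieces $\mathcal{F}_u^{T'}$ are locally closed and irreducible but are \emph{not} affine spaces, they all have the \emph{same} dimension $d=\dim\mathcal{F}_u$ (this is precisely why their closures are the irreducible components), and consequently $\mathcal{K}^T=\overline{\mathcal{F}_u^{T}}$ contains only the single piece $\mathcal{F}_u^T$ among them as a full subset. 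Your generating function $\sum_{T'} x^{2\,\mathrm{codim}\,\mathcal{F}_u^{T'}}$ would collapse to a constant. The decomposition that actually does the work is different: one partitions $\mathcal{F}_u$ into orbits of the centralizer $Z(u)$, indexed by involutions $\sigma\in\mathbf{S}_n^2(k)$, with $\mathcal{K}^T$ the union of the orbits $\mathcal{Z}_\sigma$ for $\sigma\preceq\sigma_T$; these orbits do have varying dimensions. Even then they are not cells, so one must further decompose each orbit into affine cells and — crucially — verify that the number of cells of each codimension \emph{inside an orbit} is the same for every orbit (this uses a Bialynicki-Birula argument plus a Schubert-type decomposition of a partial flag variety of $\ker u$). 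Only after that does palindromicity of $P_{\mathcal{K}^T}$ reduce to a symmetry statement for the distribution of orbit dimensions, namely $n(T{:}m)=n(T{:}d-d_0-m)$ where $n(T{:}m)$ counts $m$-codimensional $Z(u)$-orbits in $\mathcal{K}^T$.

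Your step (iv), detecting non-palindromicity by comparing the codimension-one count near the top against the count near the bottom, is exactly the right idea and is how the argument is completed: one shows that the whole of $\mathcal{F}_u$ has exactly $\overline{k}$ orbits of dimension $d_0+1$ (so $n(T{:}d-d_0-1)\le\overline{k}$), while for every $T$ of singular type $\mathcal{K}^T$ contains strictly more than $\overline{k}$ orbits of codimension one; this breaks the symmetry. But this count is a count of $Z(u)$-orbits, computable via the link-pattern combinatorics (crossings, bridges, concentric and consecutive pairs), not of Spaltenstein pieces, and it cannot be set up without first establishing the orbit decomposition and the uniform cell structure described above. A secondary caveat: you cannot take Theorem 1.3 as a black box here, since its singular half is itself proved via non-palindromicity; what must be shown directly is that every $T$ violating the combinatorial conditions has $n(T{:}1)>\overline{k}$, and separately that every $T$ satisfying them gives a smooth (hence palindromic) component via the criterion of Theorem 1.1.
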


We compute Betti numbers via the construction of a cell decomposition of ${\mathcal K}^T$ in Section \ref{section-cell-decomposition}.

We would like to understand the distribution of Betti numbers for singular components outside of the two-column case.
For $n\leq6$ all the components outside of two-column case are smooth. For $n=7$, as it is shown in \cite{F-M}, there is a unique singular component of $\F_u$ outside of two-column case, and $Y(u)=(3,2,2)$ in this case.
In Section \ref{further-speculations} we  check that the distribution of Betti numbers for this component is non-symmetric as well.
Moreover, using this fact and the above theorem for the two-column case we show in Section \ref{further-speculations} that, if $\F_u$ has singular components, then it has at least one singular component with non-symmetric distribution of Betti
numbers.

This brings us to the conjecture that it may be a general phenomenon, namely

\begin{con}
Let $T$ be a standard tableau of shape $Y(u)$.
The component ${\mathcal K}^T\subset {\mathcal F}_u$ is smooth
if and only if the Poincar\'e polynomial of ${\mathcal K}^T$ is palindromic.
\end{con}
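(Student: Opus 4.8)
We sketch a possible approach. One implication is not conjectural: a smooth component $\mathcal{K}^T$ is an irreducible smooth projective variety, so Poincar\'e duality makes its Poincar\'e polynomial palindromic, exactly as recalled just before Theorem~\ref{second-crit}. Thus the conjecture reduces to the implication ``$\mathcal{K}^T$ singular $\Rightarrow$ $P_{\mathcal{K}^T}$ not palindromic'', which is the content of Theorem~\ref{second-crit} when $Y(u)$ has two columns and which, for $n\leq 7$, follows by combining that theorem with the analysis of Section~\ref{further-speculations}. The plan is to prove this implication in general by induction on $n=\dim V$, using these facts as base cases and the Spaltenstein--Vargas fibration of a component to carry out the inductive step.

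For the inductive step, fix $T$; since the largest entry $n$ occupies a corner box of $Y(u)$, deleting that box yields a standard tableau $T'$ of a Young diagram $Y'$ with $n-1$ boxes. Sending a flag $(V_0,\dots,V_n)\in\mathcal{K}^T$ to the $u$-stable hyperplane $V_{n-1}$ defines a morphism $\rho\colon\mathcal{K}^T\to\B$ onto a projective variety $\B$, a subvariety of the projective space of $u$-stable hyperplanes; over the locus of $\B$ on which $u$ restricted to $V_{n-1}$ has Jordan type $Y'$, the fibres of $\rho$ are isomorphic to the component $\mathcal{K}^{T'}\subset\F_{u'}$ of the smaller Springer fiber attached to $u'=u_{|V_{n-1}}$. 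The first task is to show that this fibration is regular enough --- ideally a Zariski-locally trivial bundle, possibly after restriction to the relevant stratum of $\B$ --- for a Leray--Hirsch argument to produce the factorisation
\[
P_{\mathcal{K}^T}(x)=P_{\B}(x)\cdot P_{\mathcal{K}^{T'}}(x).
\]
Granting this, the recursive part of the induction closes: coefficient reversal is multiplicative, so a nonzero palindromic polynomial times a non-palindromic one is never palindromic; hence if $\mathcal{K}^{T'}$ is singular then $P_{\mathcal{K}^T}$ is not palindromic by the induction hypothesis, and if $\B$ is singular one is reduced to proving that $P_{\B}$ is not palindromic, $\B$ being itself amenable to the same circle of ideas (for instance to a Carrell--Peterson-type criterion when $\B$ is of Schubert type). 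What this fibration does \emph{not} reach is a $T$ for which $\mathcal{K}^T$ is singular yet $\B$ and all the smaller components obtained by iterating the construction are smooth --- a singularity intrinsic to the shape $Y(u)$ --- and for such $T$ one must compute $P_{\mathcal{K}^T}$ directly.

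The step I expect to be the real obstacle is exactly this last one. The only tool available there is the cell decomposition of $\mathcal{K}^T$ built in Section~\ref{section-cell-decomposition}, which would have to be extended from two columns to an arbitrary shape and then analysed so as to exhibit, for such $T$, a failure of the symmetry between the number of cells of dimension $c$ and the number of cells of dimension $\dim\mathcal{K}^T-c$; but the two-column analysis rests on the second author's description of the $B$-orbit decomposition of orbital varieties of nilpotent order two, and no comparable description of the relevant Bott--Samelson-type cells of $\mathcal{K}^T$ is known for nilpotent elements of higher order, so the argument halts here. Underneath this lies a second, more structural difficulty, present already in the smooth direction of a hypothetical general proof: a palindromic Poincar\'e polynomial encodes only Poincar\'e duality for ordinary rational cohomology, which by itself does not even imply rational smoothness --- a nodal cubic has the palindromic Poincar\'e polynomial $1+x+x^2$ --- so the conjecture really asserts a form of rigidity (Springer fiber components never display this pathology) that must be proved in its own right, presumably via a pattern-avoidance or explicit local-model argument modelled on the type~$A$ theory of Schubert varieties; and extracting the behaviour of the \emph{ordinary} cohomology of $\mathcal{K}^T$ near its singular locus, as opposed to its always-palindromic intersection cohomology, is exactly the input one has for $u^2=0$ and lacks in general.
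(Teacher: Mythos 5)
This statement is stated in the paper as a \emph{conjecture}, and the paper does not prove it: what the paper actually establishes is (a) the two-column case (Theorem~\ref{second-crit}), (b) the explicit non-palindromic Poincar\'e polynomial of the single singular component for $Y(u)=(3,2,2)$, $n=7$ (Section~\ref{example322}), and (c) the weaker existential statement that whenever $\F_u$ has a singular component, \emph{some} component has non-palindromic Poincar\'e polynomial (the theorem closing Section~\ref{further-speculations}). Your sketch correctly reproduces all three ingredients --- the easy direction via Poincar\'e duality, the reduction to ``singular $\Rightarrow$ non-palindromic'', and the fibration $\rho:(V_0,\dots,V_n)\mapsto V_{n-1}$, which is exactly the paper's Proposition~\ref{fiber-bundle} --- but, as you yourself say, it does not close the argument, so it is not a proof of the conjecture.

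The concrete gap is the one you half-identify. Proposition~\ref{fiber-bundle} gives a locally trivial bundle with base $\mathbb{P}^{m-1}$ only when $n$ lies in the \emph{last} column of $T$; for a general $T$ the fibre of $\rho$ jumps, and restricting to the stratum of $\B$ where $u_{|V_{n-1}}$ has type $Y'$ gives an open, non-projective base over which Leray--Hirsch and the clean product formula $P_{\mathcal{K}^T}=P_{\B}\cdot P_{\mathcal{K}^{T'}}$ are no longer available. This is precisely why the paper, running the induction in the opposite direction (choosing a $T$ with $T_{n-1}=T'$ and $n$ in the last column, rather than analysing a given $T$), only obtains the existential theorem and not the conjecture. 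Your fallback for the ``intrinsic'' singular components --- a direct cell count --- also has no general tool behind it: the cell decomposition of Section~\ref{section-cell-decomposition} rests on the $B$-orbit/link-pattern description of \cite{Ml-p}, which is specific to $u^2=0$, and the paper's $n=7$, $(3,2,2)$ computation in Section~\ref{example322} is an ad hoc verification, not an instance of a general method. Finally, your structural worry is well-founded and worth keeping: palindromicity of the ordinary Poincar\'e polynomial does not by itself imply even rational smoothness, so the hard implication genuinely requires new input beyond formal duality arguments. In short, your proposal stalls exactly where the paper stops, and the statement remains open there.
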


Recall that, among classical varieties, Schubert varieties share this property
that their singularity is characterized by the non-symmetry of the distribution
of Betti numbers (proved by Carrell and Peterson \cite{Carrell2}).

Note also that an algebraic variety for which the Poincar\'e duality fails is in particular rationally singular
(see for example \cite[Proposition 6.19]{Carrell}). Then,
due to the above, we obtain that the description of Springer fibers admitting rationally singular
irreducible components coincides with the description of those admitting singular components.

The fact that the singular components of $\F_u$ for $u^2=0$ are rationally singular
(and also the fact that they are normal) is shown
 by Perrin and Smirnov \cite{Smirnov} using different methods.

\subsection{Outline of the proof of Theorems \ref{first-crit} and \ref{second-crit}}
\label{plan}
Before stating our third criterion, let us describe our plan.

Let $Z(u)=\{g\in GL(V):gu=ug\}$ be the stabilizer of $u$ in $GL(V).$
It is  a closed, connected subgroup of $GL(V)$. Its natural action on flags
leaves ${\mathcal F}_u$ and every component of ${\mathcal F}_u$ invariant.
As a preliminary step, assuming that $Y(u)$ has two columns of lengths $(n-k,k)$,
we show that
${\mathcal F}_u$ is a finite union of $Z(u)$-orbits,
parameterized by the permutations $\sigma\in \mathbf{S}_n^2(k)$,
and we also describe the decomposition into $Z(u)$-orbits
of each component ${\mathcal K}^T\subset{\mathcal F}_u$
(Section \ref{section-Z(u)-orbites}).
In particular, we show that ${\mathcal F}_u$ has a unique $Z(u)$-orbit of minimal dimension
$d_0$.
We derive this description of $Z(u)$-orbits of components of ${\mathcal F}_u$
from the description of $B$-orbits of orbital varieties given in \cite{Ml-p}.

Next, we show that each $Z(u)$-orbit of ${\mathcal F}_u$ admits
a cell decomposition, such that the number of cells in the decomposition
and their codimensions are the same for all the $Z(u)$-orbits (cf. Proposition \ref{prop-cell-decomposition}).
We deduce that the distribution of Betti numbers of a component ${\mathcal K}^T$
is symmetric if and only if the number of $Z(u)$-orbits of ${\mathcal K}^T$
of codimension $m$ is equal to the number of $Z(u)$-orbits of ${\mathcal K}^T$
of dimension $d_0+m$, for every $m$.

Set a standard tableau $T$ to be of first type if
$|\tau^*(T)|=1$, or $|\tau^*(T)|=2$ and $1$ or $n$ is an end point of $\sigma_T$,
or $|\tau^*(T)|=3$ and $1,n$ are end points of $\sigma_T$ and $(1,n)\notin \sigma_T$,
and set $T$ to be of second type otherwise.
In Section \ref{section-on-nonsingular-components} we prove that, if $T$ is of first type, then
the component ${\mathcal K}^T$ is smooth, by using Theorem \ref{previous-crit}
and an inductive argument.
Set
$$\overline{k}=\left\{\begin{array}{ll}
k &{\rm if}\ k=\frac{n}{2},\\
k+1 & {\rm otherwise}.\\
\end{array}\right. $$
In Section \ref{section-on-singular-components}, we show that ${\mathcal F}_u$ has exactly $\overline{k}$ $Z(u)$-orbits
of dimension $d_0+1$, whereas, if $T$ is of second type, then
${\mathcal K}^T$ contains more than $\overline{k}$ $Z(u)$-orbits of codimension 1.
This
proves Theorem \ref{first-crit} and Theorem \ref{second-crit}.

\subsection{ Singularity and intersections of codimension 1}
\label{third-criterion}
By the way, we obtain the third criterion of singularity.
For  a standard tableau $T$, let $\eta(T)$ be the number
of components ${\mathcal K}'\subset{\mathcal F}_u$ such
that $\mathrm{codim}\,{\mathcal K}^T\cap {\mathcal K}'=1$.
We have the following

\begin{thm}\label{third-crit}
Suppose that $Y(u)$ has two columns of lengths $(n-k,k)$. Let $T$ be a standard tableau of shape $Y(u)$.
The component ${\mathcal K}^T\subset {\mathcal F}_u$ is singular
if and only if $\eta(T)>\overline{k}$.
\end{thm}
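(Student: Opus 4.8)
The plan is to deduce Theorem~\ref{third-crit} from Theorems~\ref{first-crit} and \ref{second-crit} together with the description of the $Z(u)$-orbit stratification of ${\mathcal F}_u$ worked out in Sections~\ref{section-Z(u)-orbites}--\ref{section-on-singular-components}; the link between the geometry of the pairwise intersections and this combinatorics will be the identity
\begin{equation}\label{eq-etastar}
\eta(T)=\bigl|\{\,Z(u)\text{-orbits of }{\mathcal F}_u\text{ of codimension }1\text{ contained in }{\mathcal K}^T\,\}\bigr|.
\end{equation}
Once \eqref{eq-etastar} is known, the statement follows at once from what is established for Theorems~\ref{first-crit} and \ref{second-crit}. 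Indeed, if ${\mathcal K}^T$ is singular then, by Theorem~\ref{first-crit}, $T$ is of second type, so by Section~\ref{section-on-singular-components} the component ${\mathcal K}^T$ contains strictly more than $\overline{k}$ orbits of codimension $1$, whence $\eta(T)>\overline{k}$ by \eqref{eq-etastar}. Conversely, if ${\mathcal K}^T$ is smooth then, by Theorem~\ref{second-crit}, its Poincar\'e polynomial is palindromic, which (by the cell-decomposition argument of Section~\ref{section-cell-decomposition}, cf.\ Section~\ref{plan}) means that for every $m$ the number of $Z(u)$-orbits of codimension $m$ in ${\mathcal K}^T$ equals the number of $Z(u)$-orbits of dimension $d_0+m$ in ${\mathcal K}^T$; taking $m=1$ and using that ${\mathcal F}_u$ has exactly $\overline{k}$ orbits of dimension $d_0+1$ in all (Section~\ref{section-on-singular-components}), we obtain that ${\mathcal K}^T$ has at most $\overline{k}$ orbits of codimension $1$, hence $\eta(T)\leq\overline{k}$ by \eqref{eq-etastar}. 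Combining the two implications gives the theorem.

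It thus remains to prove \eqref{eq-etastar}. Set $d=\mathrm{dim}\,{\mathcal F}_u$. For $T'\neq T$, the components ${\mathcal K}^T$ and ${\mathcal K}^{T'}$ are irreducible of the same dimension $d$, so ${\mathcal K}^T\cap{\mathcal K}^{T'}$ is a proper closed subset of ${\mathcal K}^T$; moreover it is $Z(u)$-stable since $Z(u)$ preserves every component, hence it is a union of orbit closures, and therefore its codimension equals $1$ exactly when it contains a $Z(u)$-orbit of codimension $1$. Thus $\eta(T)$ counts the components ${\mathcal K}^{T'}$, $T'\neq T$, for which ${\mathcal K}^T\cap{\mathcal K}^{T'}$ contains a codimension-one orbit. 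To match this with the right-hand side of \eqref{eq-etastar}, we would establish, using the explicit $Z(u)$-orbit order on $\mathbf{S}_n^2(k)$ recalled in Section~\ref{section-Z(u)-orbites}, the two facts: (A) every $Z(u)$-orbit of codimension $1$ in ${\mathcal F}_u$ is contained in exactly two irreducible components of ${\mathcal F}_u$; and (B) whenever ${\mathcal K}^T\cap{\mathcal K}^{T'}$ has codimension $1$, it contains a unique $Z(u)$-orbit of codimension $1$. Granting (A), each codimension-one orbit ${\mathcal O}\subset{\mathcal K}^T$ lies in a unique second component ${\mathcal K}'({\mathcal O})$, and ${\mathcal K}^T\cap{\mathcal K}'({\mathcal O})\supseteq\overline{{\mathcal O}}$ has codimension $1$; conversely, any component ${\mathcal K}^{T'}$ meeting ${\mathcal K}^T$ in codimension $1$ contains a codimension-one orbit ${\mathcal O}\subset{\mathcal K}^T$, and then (A) forces ${\mathcal K}'({\mathcal O})={\mathcal K}^{T'}$. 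Finally, (B) makes the assignment ${\mathcal O}\mapsto{\mathcal K}'({\mathcal O})$ injective. Hence ${\mathcal O}\mapsto{\mathcal K}'({\mathcal O})$ is a bijection from the set of codimension-one orbits of ${\mathcal K}^T$ onto the set of components meeting ${\mathcal K}^T$ in codimension $1$, which is precisely \eqref{eq-etastar}.

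The main obstacle will be the proof of (A) and (B), that is, an accurate description of the $Z(u)$-orbit order on $\mathbf{S}_n^2(k)$ just below its maximal elements: one has to show that an involution $\sigma\in\mathbf{S}_n^2(k)$ whose orbit has codimension $1$ is covered by exactly two elements of $\mathbf{S}_n^2(k)$ (each of which necessarily indexes a full-dimensional orbit), and that two distinct codimension-one orbits of the same component never lie below the same pair of components. We expect this to follow from a direct case analysis based on the combinatorial description of the closure order imported from \cite{Ml-p}, organized according to the elementary degenerations of link patterns; the remaining parts of the argument are formal once Theorems~\ref{first-crit} and \ref{second-crit} and the $Z(u)$-orbit count of Section~\ref{section-on-singular-components} are in hand.
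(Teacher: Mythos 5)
Your overall architecture is exactly the paper's: reduce the theorem to the identity $\eta(T)=|N(\sigma_T)|$ (your \eqref{eq-etastar}), prove that identity via your facts (A) and (B), and then conclude from Proposition \ref{prop-singular-components} in one direction and from Theorem \ref{second-crit} together with Proposition \ref{prop-Betti-orbits} and the count $|P(\sigma_0)|=\overline{k}$ in the other. That formal skeleton is correct as you present it. The gap is that (A) and (B) are the entire substantive content of the proof, and you leave both to an unexecuted ``direct case analysis.'' For (A) the paper does carry out such an analysis, and it is short: by Proposition \ref{proposition-orbits}(b), $\mathrm{codim}_{{\mathcal F}_u}{\mathcal Z}_\sigma=1$ forces $c(\sigma)+b(\sigma)=1$, so $P_\sigma$ has either a unique crossing of two arcs $(i,j),(i',j')$ with $i<i'<j<j'$, or a unique fixed point $p$ under a unique bridge $(i,j)$; in the first case Lemma \ref{lem-codim} gives $P(\sigma)=\{\sigma_{i'\leftrightarrows j},\sigma_{i\leftrightarrows i'}\}$ (resolving the crossing into a consecutive pair or a concentric pair), and in the second case $P(\sigma)=\{\sigma_{i\rightarrow p},\sigma_{j\rightarrow p}\}$. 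You would need to reproduce this to make (A) complete, but it is within reach of your stated method.

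Fact (B) is where your plan is likely to fail as stated. You propose to show combinatorially that two distinct codimension-one orbits of ${\mathcal K}^T$ never lie below the same second component, i.e.\ a statement about common lower bounds of two maximal elements in the poset $(\mathbf{S}_n^2(k),\preceq)$. The paper does not prove this combinatorially at all: it invokes the geometric theorem of Melnikov--Pagnon (\cite[Theorem 4.1]{M-P3}) that a codimension-one intersection ${\mathcal K}^T\cap{\mathcal K}'$ of two components is \emph{irreducible}, hence contains a unique $Z(u)$-orbit of maximal dimension. Irreducibility of these intersections is a nontrivial result with its own paper behind it, and there is no indication that it drops out of the closure-order description of \cite{Ml-p} by inspection. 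So either you must import that external theorem explicitly, or you owe a genuinely new combinatorial argument for uniqueness of the maximal common lower bound; as written, your proof of \eqref{eq-etastar} is incomplete at precisely this point.
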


The proof is given in \ref{proof-third-criterion}.
This theorem connects the question of singularity to
the classical question of components of Springer fibers intersecting
in codimension 1. Consider the graph whose vertices are the standard tableaux of shape $Y(u)$,
with an edge $(T,T')$ if $\mathrm{codim}\,{\mathcal K}^T\cap{\mathcal K}^{T'}=1$.
It is stated in \cite[Conjecture 6.3]{K-L} that this graph is a $W$-graph,
which defines an irreducible representation of the Hecke algebra of the symmetric group $\mathbf{S}_n$.
The conjecture is known to be true in the two-column case (see \cite{Ml-p}).
Then the theorem shows that the singular components of ${\mathcal F}_u$
correspond to the vertices of the graph belonging to more than $\overline{k}$ edges.

\subsection{Notation}
In what follows, $V$ denotes an $n$-dimensional $\mathbb{C}$-vector space, the subspaces are denoted by
$W,W',...$, the flags are written as $(V_0,...,V_n)$ or $(V_0\subset...\subset V_n)$ or $F$.
All along this article but Section \ref{further-speculations}, we consider a nilpotent endomorphism $u$ of nilpotent order two and of  rank $k$.
Its Young diagram $Y(u)$ consists of  two columns of lengths $n-k$ and $k$.
The standard tableaux of shape $Y(u)$ are usually denoted by $T,T',...$ and the corresponding
components in the Springer fiber ${\mathcal F}_u$ are denoted by ${\mathcal K}^T,{\mathcal K}^{T'},...$.
Permutations are usually written $\sigma,\sigma',...$, or $w,w',...$
We denote by $|A|$ the cardinality of a set $A$. For an algebraic set $U$, its closure (in Zariski topology)
is denoted by $\ov{U}.$

The reader can find the index of notation at the end of the paper.

\section{Decomposition of the variety ${\mathcal F}_u$ into $Z(u)$-orbits}

\label{section-Z(u)-orbites}

In this section, we show that the $Z(u)$-orbits of ${\mathcal F}_u$ are parameterized by the involutions
$\sigma\in\mathbf{S}_n^2(k)$. We denote  the orbit associated to $\sigma$ by ${\mathcal Z}_\sigma$.
We give the dimension formula for  ${\mathcal Z}_\sigma$ and describe its closure.
We also describe a component ${\mathcal K}^T\subset {\mathcal F}_u$ as the closure
of a maximal orbit ${\mathcal Z}_{\sigma_T}$ associated to $\sigma_T\in\mathbf{S}_n^2(k)$ (see \ref{first-criterion}).
Finally, we show that there is a unique minimal orbit ${\mathcal Z}_{\sigma_0}\subset{\mathcal F}_u$
associated to an involution $\sigma_0=\sigma_0(k)\in\mathbf{S}_n^2(k)$.

\subsection{Combinatorial set up}
\label{definition-link-pattern}
Let us begin with some combinatorial definitions.
Recall that $\mathbf{S}_n^2$ denotes the set of permutations $\sigma\in\mathbf{S}_n$ with $\sigma^2=I$,
and $\mathbf{S}_n^2(k)\subset\mathbf{S}_n^2$ denotes the subset of permutations
obtained as product of $k$ pairwise disjoint transpositions.
Following \cite{Ml-p}, the link pattern $P_\sigma$ corresponding to $\sigma\in\mathbf{S}_n^2$
is an array of $n$ points on a horizontal line where points $i<j$ are connected by an arc $(i,j)$
if $\sigma(i)=j$. Such points are called end points of an arc. We will not distinguish between $(i,j)\in \sigma$ and an arc $(i,j)$ of $P_\sigma .$
A point $p$ with $\sigma(p)=p$ is called a fixed point of $P_\sigma$. For $1\leq i\leq j\leq n$ let $[i,j]=\{i,i+1,\ldots,j\}$ denote the set of integer points of the interval $[i,j]$.
Set $P_\sigma^0=\{i\in[1,n]\ :\ \sigma(i)=i\}$
to be the set of fixed points,
 $P_\sigma^-=\{i\in[1,n]\ :\ i<\sigma(i)\}$ to be the set of left end points,
and  $P_\sigma^+=\{i\in[1,n]\ :\ i>\sigma(i)\}$ to be the set of right end points of $P_\sigma.$
The number of arcs in $P_\sigma$ is equal to $k$ whenever $\sigma\in\mathbf{S}_n^2(k)$.
For example,
for $\sigma=(1,3)(2,6)(4,7)\in \mathbf{S}_7^2$ one has
\begin{center}
\begin{picture}(100,65)(0,20)
\put(-20,40){$P_\sigma=$}
\multiput(10,40)(20,0){7}%
{\circle*{3}}
 \put(8,25){1}
 \put(28,25){2}
 \put(48,25){3}
 \put(68,25){4}
 \put(88,25){5}
 \put(108,25){6}
 \put(128,25){7}
 \qbezier(10,40)(30,70)(50,40)
 \qbezier(30,40)(70,100)(110,40)
 \qbezier(70,40)(100,80)(130,40)
\end{picture}
\end{center}
In our example $P_\sigma^0=\{5\},$\ $P_\sigma^-=\{1,2,4\}$ and $P_\sigma^+=\{3,6,7\}.$

Two arcs $(i,j)$, $(i',j')$ with $i,i'\in P_\sigma^-$, $i<i'$
are said to have a {\em crossing} if $i<i'<j<j'$.
We denote by $c(\sigma)$ the number of crossings of arcs in $P_\sigma$.
Let $p\in P_\sigma^0$ be a fixed point, we call an arc $(i,j)\in \sigma$ a {\em bridge} over $p$
if $p\in[i,j]$.
We denote by $b_p(\sigma)$ the number of bridges over $p$,
and we set $b(\sigma)=\sum_{p\in P_\sigma^0}b_p(\sigma)$.

In the example above, $c(\sigma)=2$ and $b(\sigma)=b_5(\sigma)=2.$

Note that $\sigma_T$ from \ref{first-criterion} is defined in such a way that $P_{\sigma_T}$
is a link pattern with $P_{\sigma_T}^+$ equal to the set of entries of the second column of $T$, without crossing
arcs and bridges over fixed points. For $T$, such a link pattern exists and is unique.

For integers $a,b\ :\ 1\leq a\leq b\leq n$,
we write $(i,j)\in[a,b]$ if $i,j\in[a,b]$. Let
$R_{a,b}(\sigma)$
be the number of arcs $(i,j)\in \sigma$
such that $(i,j)\in[a,b]$.
We define a partial order on $\bS_n^2$ by putting
$\sigma'\preceq \sigma$ if $R_{a,b}({\sigma'})\leq R_{a,b}(\sigma)$ for any $1\leq a\leq b\leq n$.

\subsection{Description of the $Z(u)$-orbit decomposition of ${\mathcal F}_u$}
\label{sect-proposition-orbits}
Let $\sigma\in\mathbf{S}_n^2(k)$.
A basis $(e_1,...,e_n)$ of $V$
is said to be a $\sigma$-basis
if $u(e_i)=0$ for $i\in P_\sigma^0\cup P_\sigma^-$ and $u(e_i)=e_{\sigma_i}$ for $i\in P_\sigma^+$.
Let ${\mathcal Z}_\sigma$ be the set of flags $F=(\langle e_1,...,e_i \rangle)_{i=0,...,n}$,
for some $\sigma$-basis $(e_1,...,e_n)$.
We have ${\mathcal Z}_\sigma\subset {\mathcal F}_u$.

For two $\sigma$-bases $(e_1,...,e_n)$ and $(e'_1,...,e'_n)$,
the element $g\in GL(V)$ defined by $g(e_i)=e'_i$ for any $i$,
satisfies $gu=ug$. It follows that two flags $F,F'\in {\mathcal Z}_\sigma$
belong to the same $Z(u)$-orbit.
Conversely, if $(e_1,...,e_n)$ is a $\sigma$-basis,
then $(ge_1,...,ge_n)$ is a $\sigma$-basis for any $g\in Z(u)$.
It follows that
for $F\in {\mathcal Z}_\sigma$ and $g\in Z(u)$ one has $gF\in {\mathcal Z}_\sigma$.
Thus ${\mathcal Z}_\sigma$ is a $Z(u)$-orbit.
We prove the following

\begin{prop}\label{proposition-orbits}
(a) The map $\sigma\mapsto{\mathcal Z}_\sigma$ is a bijection between $\mathbf{S}_n^2(k)$ and
the set of $Z(u)$-orbits of ${\mathcal F}_u$. \\
(b)  $\mathrm{dim}\,{\mathcal Z}_\sigma=\mathrm{dim}\,{\mathcal F}_u-c(\sigma)-b(\sigma)$. \\
(c) $\ov{\mathcal Z}_\sigma=\bigsqcup\limits_{\sigma'\in\bS_n^2(k)\, :\,\sigma'\preceq\sigma}{\mathcal Z}_{\sigma'}$. \\
(d) For $T$ a standard tableau of shape $Y(u)$, the component ${\mathcal K}^T\subset {\mathcal F}_u$
is the closure of the $Z(u)$-orbit ${\mathcal Z}_{\sigma_T}$.
\end{prop}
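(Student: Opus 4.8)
The plan is to establish (a)--(c) first by a direct analysis of the orbit structure, and then to derive (d) from (a)--(c) together with the parametrization of components recalled in \S\ref{parameterization-components}.

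For part (a), I would first check that every $u$-stable flag $F=(V_0,\ldots,V_n)$ lies in some ${\mathcal Z}_\sigma$. Given $F$, one reconstructs $\sigma$ as follows: $i\in[1,n]$ is declared an end point whenever $u(V_i)\not\subset V_{i-1}$ (equivalently, $u$ acts nontrivially on $V_i/V_{i-1}$ at the last moment), and one pairs such $i$ with the appropriate smaller index determined by where $u(e_i)$ first enters the flag; since $u^2=0$ and $\Rank u=k$, exactly $k$ indices arise as ``second'' coordinates of arcs, so $\sigma\in\mathbf{S}_n^2(k)$. Then one produces a $\sigma$-basis adapted to $F$ by a Gram--Schmidt-type procedure along the flag. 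Injectivity is clear because $\sigma$ can be read off intrinsically from any flag in ${\mathcal Z}_\sigma$: the positions of the arcs are exactly the jumps described above and do not depend on the chosen $\sigma$-basis. This gives the bijection between $\mathbf{S}_n^2(k)$ and the set of $Z(u)$-orbits, using the paragraph preceding the proposition which already shows each ${\mathcal Z}_\sigma$ is a single orbit.

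For part (b), I would compute $\dim {\mathcal Z}_\sigma$ as $\dim Z(u) - \dim \mathrm{Stab}_{Z(u)}(F)$ for $F\in{\mathcal Z}_\sigma$, or, more combinatorially, realize ${\mathcal Z}_\sigma$ as a quotient and count parameters: a $\sigma$-basis is determined up to the freedom of rescaling and of adding lower basis vectors subject to the constraints $u(e_i)=0$ (for $i\in P_\sigma^0\cup P_\sigma^-$) and $u(e_i)=e_{\sigma_i}$ (for $i\in P_\sigma^+$), and one sees that the obstructions to reaching the ``generic'' flag are indexed precisely by the crossings of arcs and the bridges over fixed points, yielding $\dim{\mathcal Z}_\sigma=\dim{\mathcal F}_u-c(\sigma)-b(\sigma)$; the formula \eqref{eq1} for $\dim{\mathcal F}_u$ in the two-column case gives the normalization. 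Part (c) would follow by a standard degeneration argument: a one-parameter family of $\sigma$-bases can be degenerated to merge an end point with a neighboring point, and these elementary degenerations generate exactly the order $\sigma'\preceq\sigma$ (one checks that each elementary move decreases some $R_{a,b}$ and that $\sigma'\preceq\sigma$ is generated by such moves); conversely, $R_{a,b}(\sigma)$ is upper semicontinuous on ${\mathcal F}_u$ since $R_{a,b}(\sigma)=\dim\big(u(V_b)\cap V_a\big)$-type expressions can be written as ranks of linear maps attached to the flag, so $\ov{{\mathcal Z}_\sigma}\subset\bigsqcup_{\sigma'\preceq\sigma}{\mathcal Z}_{\sigma'}$, and dimension-counting forces equality.

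Finally, for part (d): since ${\mathcal F}_u$ is the disjoint union of the closed $Z(u)$-invariant components ${\mathcal K}^T$ and also the finite union of the orbit closures $\ov{{\mathcal Z}_\sigma}$, and each component is irreducible of dimension $\dim{\mathcal F}_u$, every component must be the closure of a single orbit ${\mathcal Z}_\sigma$ of maximal dimension $\dim{\mathcal F}_u$ (equivalently $c(\sigma)=b(\sigma)=0$); conversely each such maximal orbit closure is an irreducible component. It remains to match the bijection $T\mapsto{\mathcal K}^T$ with the bijection $T\mapsto\sigma_T$, i.e. to verify that $\ov{{\mathcal Z}_{\sigma_T}}={\mathcal K}^T$ for the specific $\sigma_T$ defined in \S\ref{first-criterion}. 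Here I would use the flags $F_\tau$ of \S\ref{previous-criterion}: for a suitable row-standard $\tau$ with $F_\tau\in{\mathcal K}^T\cap{\mathcal F}_u^T$, one computes directly that $F_\tau\in{\mathcal Z}_\sigma$ with $\sigma=\sigma_T$, using that $\sigma_T$ is characterized as the unique crossingless, bridgeless link pattern whose right end points are the second-column entries of $T$; since $F_\tau$ lies in the dense orbit of ${\mathcal K}^T$ (being in ${\mathcal F}_u^T$, whose closure is ${\mathcal K}^T$), this identifies the two.

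The main obstacle I expect is part (c) --- proving both inclusions for the closure. The semicontinuity direction is routine once the $R_{a,b}$ are expressed as ranks, but showing that \emph{every} $\sigma'\preceq\sigma$ is actually attained in the closure (not just those differing by one elementary move) requires either an explicit inductive construction of degenerating families of $\sigma$-bases, or an appeal to the analogous closure description for $B$-orbits of orbital varieties in \cite{Ml-p} transported through the correspondence between ${\mathcal F}_u$ and the orbital variety; reconciling the two combinatorial orders is the delicate point.
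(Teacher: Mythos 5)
Your outline identifies the right structure, but the two load-bearing steps are asserted rather than proved, and they are precisely the nontrivial content of the proposition. For (b), the claim that ``the obstructions to reaching the generic flag are indexed precisely by the crossings of arcs and the bridges over fixed points'' is the theorem, not an observation one ``sees''; it is the dimension formula $\codim {\mathcal B}_\sigma=c(\sigma)+b(\sigma)$ for $B$-orbits of nilpotent order two proved in \cite{Mx2} and \cite{Ml-p}, and a stabilizer computation in $Z(u)$ would have to reproduce that combinatorial analysis in full. For (c), you correctly flag the hard direction yourself: semicontinuity of the ranks $R_{a,b}$ gives $\ov{{\mathcal Z}_\sigma}\subset\bigsqcup_{\sigma'\preceq\sigma}{\mathcal Z}_{\sigma'}$ (your dimension-count cannot ``force equality'' here, since the right-hand side is a union of many orbits of various dimensions --- surjectivity genuinely requires exhibiting degenerations realizing every $\sigma'\preceq\sigma$, or at least a generating set of covering relations, which is again a theorem of \cite{Ml-p}). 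The paper does not reprove any of this: it sets up the correspondence $\X(k)\cong Z(u)\backslash G(u)$ and ${\mathcal F}_u\cong G(u)/B$ with $G(u)=\{g:g^{-1}ug\in\mathfrak{n}\}$, checks that ${\mathcal Z}_\sigma=\psi(\varphi^{-1}({\mathcal B}_\sigma))$ via explicit elements $g_\sigma$, and transfers (a)--(c) wholesale from the known $B$-orbit statements, using that $\varphi,\psi$ are open (for closures) and are fibrations with fibers $Z(u)$ and $B$ (for codimensions). If you want a self-contained proof you must actually carry out the combinatorics; otherwise you should set up this transfer carefully, which is more than the one-sentence ``appeal'' in your final paragraph.

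Two smaller points. In (a), the direct reconstruction of $\sigma$ from a flag is plausible in spirit, but the pairing rule needs care: knowing that $u$ acts nontrivially on $V_i/V_{i-1}$ does not by itself tell you the partner index; you must define $j=\sigma(i)$ as the minimal $j$ with $u(V_i)\subset V_j+u(V_{i-1})$ (or an equivalent rank condition) and verify this is well defined and constant on $Z(u)$-orbits. In (d), your detour through the flags $F_\tau$ is unnecessary and slightly delicate (you would need a $\tau$ with $F_\tau$ in the \emph{dense} orbit, not merely in ${\mathcal K}^T$); the paper instead checks directly from the definitions that any flag of a $\sigma_T$-basis lies in ${\mathcal F}_u^T$, since the rank of $u$ restricted to $V_i$ equals $|P_{\sigma_T}^+\cap[1,i]|$, which is exactly the length of the second column of $Y_i^T$. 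Combined with $c(\sigma_T)=b(\sigma_T)=0$ and (b), this gives (d) immediately.
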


This description is the translation of the description  of $B$-orbits in the variety
of upper nilpotent matrices of nilpotent order $2$, given in \cite{Ml-p}, into the language of flags.

\medskip
\noindent
{\em Proof of Proposition \ref{proposition-orbits}.}

\smallskip
(1) First,
we recall the description of $B$-orbits.
Let $\nil=\nil_n$ be the algebra of strictly upper triangular $n\times n$ matrices
and put $\X=\{N\in \nil\ :\ N^2=0\}$ to be the subvariety of elements of nilpotent order two.
Put $\X(k)=\{N\in \X:\mathrm{rank}\,N=k\}$.
Let $B=B_n$ be the (Borel) group of upper triangular invertible $n\times n$ matrices and let $B$ act on $\nil$
by conjugation. This action stabilizes $\X$ and $\X(k)$.
For $\sigma\in\bS_n$
put
$$(N_\sigma)_{i,j}=\left\{\begin{array}{ll}
1&{\rm if}\ i<j\ {\rm and}\ \sigma(i)=j,\\
0&{\rm otherwise.}\\
\end{array}\right.
$$
Obviously for $\sigma\in\bS_n^2$ one has $N_\sigma^2=0$, hence $N_\sigma\in\X$,
and for $\sigma\in\bS_n^2(k)$ one has $N_\sigma\in\X(k)$.
Let ${\mathcal B}_\sigma=\{bN_\sigma b^{-1}:b\in B\}$ denote the $B$-orbit of $N_\sigma$.
We have the following description for $B$-orbits of $\X(k)$
(see \cite[\S 2.2]{Mx2} and \cite[\S 2.4 and \S 3.1]{Ml-p}): \\
(a) The map $\sigma\mapsto{\mathcal B}_\sigma$ is a bijective correspondence
between permutations $\sigma\in\mathbf{S}_n^2(k)$ and $B$-orbits of $\X(k)$. \\
(b) $\mathrm{codim}_{\X(k)}\,{\mathcal B}_\sigma=c(\sigma)+b(\sigma)$. \\
(c) $\ov{\mathcal B}_\sigma=\bigsqcup\limits_{\sigma'\preceq \sigma}{\mathcal B}_{\sigma'}$.

\smallskip
(2) Let us show a correspondence between $B$-orbits of ${\mathcal X}(k)$
and $Z(u)$-orbits of ${\mathcal F}_u$.
We fix a basis $\underline{e}=(e_1,...,e_n)$ of $V$ such that $u(e_i)=0$ for $i=1,...,n-k$
and $u(e_i)=e_{i-n+k}$ for $i=n-k+1,...,n$.
We identify $\psi\in{\rm End}(V)$ with its representing matrix with respect to the basis $\underline{e}$.
Let $F_0=(V_0,...,V_n)$ be the flag defined by $V_i=\langle e_1,...,e_i\rangle$.
Let $G(u)=\{g\in GL(V):g^{-1}ug\in\nil\}$. Note that $Z(u),B\subset G(u).$
Moreover, $Z(u)$ acts on $G(u)$ by left multiplication, that is $ zg\in G(u)$ for any $z\in Z(u),\ g\in G(u)$.
Since for $x\in \nil$ and $b\in B$ one has $b^{-1}xb\in \nil$, we get that $B$ acts on $G(u)$ by right multiplication, that is $gb\in G(u)$ for any $b\in B$, $g\in G(u).$

The map $\varphi:G(u)\rightarrow {\mathcal X}(k)$,
$g\mapsto g^{-1}ug$ is surjective, and
the quotient map $\varphi':Z(u)\backslash G(u)\rightarrow {\mathcal X}(k)$ is an isomorphism
of algebraic varieties.
The map $\psi:G(u)\rightarrow {\mathcal F}_u$,
$g\mapsto gF_0$ is well defined
and surjective, and
the quotient map $\psi':G(u)\slash B\rightarrow {\mathcal F}_u$ is an isomorphism
of algebraic varieties (see \cite{M-Pagnon}).

For $\sigma\in \mathbf{S}_n^2(k)$, it is easy to see that $\psi(\varphi^{-1}({\mathcal B}_\sigma))\subset {\mathcal F}_u$
is a $Z(u)$-orbit.
Write $\sigma=(i_1,j_1)\cdots(i_k,j_k)$ with $i_l<j_l$, and let $i_{k+1},...,i_{n-k}$ be the fixed points
of $\sigma$. Let $g_\sigma\in GL(V)$ be defined by
$g_\sigma(e_{i_l})=e_{l}$ for $l=1,...,n-k$,
and $g_\sigma(e_{j_l})=e_{n-k+l}$ for $l=1,...,k$.
It is easy to check that  $g_\sigma\in G(u)$,
and that it satisfies
$N_\sigma=\varphi(g_\sigma)$ and $\psi(g_\sigma)\in{\mathcal Z}_\sigma$.
Thus $\psi(\varphi^{-1}({\mathcal B}_\sigma))={\mathcal Z}_\sigma$.
Then, Claim (a) of Proposition \ref{proposition-orbits} follows from
fact (a) of part (1) of the proof.
Since the maps $\varphi$ and $\psi$ are open, it is easy to see that
$${\mathcal B}_{\sigma'}\subset\overline{{\mathcal B}_\sigma}\quad{\rm if\ and\ only\ if}\quad
{\mathcal Z}_{\sigma'}\subset\overline{{\mathcal Z}_\sigma}.$$
Then, Claim (c) of Proposition \ref{proposition-orbits} follows from fact (c) of part (1) of the proof.
Moreover, since $\varphi$ and $\psi$ are fibrations of fibers $Z(u)$ and $B$ respectively,
we have
\renewcommand{\arraystretch}{1.5}
\[\begin{array}{c}
\mathrm{dim}\,{\mathcal F}_u+\mathrm{dim}\,B=\mathrm{dim}\,{\mathcal X}(k)+\mathrm{dim}\,Z(u),\\
\mathrm{dim}\,{\mathcal B}_\sigma+\mathrm{dim}\,Z(u)=\mathrm{dim}\,\varphi^{-1}({\mathcal B}_\sigma)
=\mathrm{dim}\,\psi^{-1}({\mathcal Z}_\sigma)
=\mathrm{dim}\,{\mathcal Z}_\sigma+\mathrm{dim}\,B.
\end{array}
\renewcommand{\arraystretch}{1}\]
Therefore,  $\mathrm{codim}_{{\mathcal F}_u}\,{\mathcal Z}_\sigma=
\mathrm{codim}_{{\mathcal X}(k)}\,{\mathcal B}_\sigma$,
and Claim (b) of Proposition \ref{proposition-orbits} follows from
the fact (b) of part (1) of the proof.

To complete the proof of Proposition \ref{proposition-orbits},
it remains to show (d).
On one hand, by definition of $\sigma_T$
we have $c(\sigma_T)=b(\sigma_T)=0$, hence,
by (b)
we obtain $\mathrm{dim}\,{\mathcal Z}_{\sigma_T}=\mathrm{dim}\,{\mathcal F}_u$.
Thus, the closure of ${\mathcal Z}_{\sigma_T}$ is a component of ${\mathcal F}_u$.
On the other hand,
by definition of ${\mathcal F}_u^T$ in \ref{parameterization-components}, it is easy
to see that ${\mathcal Z}_{\sigma_T}\subset {\mathcal F}_u^T$,
which implies ${\mathcal Z}_{\sigma_T}\subset{\mathcal K}^T$, and this completes the proof of (d).
\hfill $\Box$

\subsection{The minimal $Z(u)$-orbit}
\label{minimal-orbit}
Let $\sigma_0=\sigma_0(k)\in \mathbf{S}^2_n(k)$ be the permutation
\[\sigma_0=(1,n-k+1)(2,n-k+2)\cdots(k,n).\]
By \cite[Proposition 3.14]{M-P2}, $\B_{\sigma_0}\subset\ov{\B_\sigma}$ for any $\sigma\in\mathbf{S}^2_n(k).$
Thus, by the bijection above, ${\mathcal Z}_{\sigma_0}$ lies in the closure of
each $Z(u)$-orbit of ${\mathcal F}_u$ and it is the unique minimal $Z(u)$-orbit
of ${\mathcal F}_u$.
We have $c(\sigma_0)=\frac{1}{2}k(k-1)$ and $b(\sigma_0)=k(n-2k)$.
By Proposition \ref{proposition-orbits}(b) and formula (\ref{eq1}),
it follows (after simplification) 
$$\mathrm{dim}\,{\mathcal Z}_{\sigma_0}=\frac{1}{2}(n-2k)(n-2k-1)+\frac{1}{2}k(k-1).$$

\section{Decomposition of $Z(u)$-orbits into cells}

\label{section-cell-decomposition}

The main goal of this section is to show that a component
${\mathcal K}^T\subset {\mathcal F}_u$
has a symmetric distribution of Betti numbers
if and only if the
distribution of dimensions of its $Z(u)$-orbits has also
a symmetry property, namely: for all $m$, the number of orbits ${\mathcal Z}_\sigma\subset{\mathcal K}^T$ with
$\mathrm{dim}\,{\mathcal Z}_\sigma=\mathrm{dim}\,{\mathcal K}^T-m$ is equal to
the number of orbits ${\mathcal Z}_{\sigma'}\subset{\mathcal K}^T$
with $\mathrm{dim}\,{\mathcal Z}_{\sigma'}=\mathrm{dim}\,{\mathcal Z}_{\sigma_0}+m$, with $\sigma_0$ from \ref{minimal-orbit}.
To do this, we show that each $Z(u)$-orbit admits a cell decomposition
such that the number of cells and their codimensions are the same for all the orbits.

\subsection{Cell decomposition of an algebraic variety}
\label{cell-decomposition}
Let us begin with general definitions.
A finite partition of an algebraic variety $X$ is said to be an $\alpha$-partition
if the subsets in the partition can be indexed by $1,...,r$ in such a way that
$X_1\cup\ldots\cup X_p$ is closed for every $p=1,...,r$. An $\alpha$-partition is a
{\em cell decomposition} if each subset $X_p$ is isomorphic as an algebraic variety
to an affine space $\mathbb{C}^{d_p}$ for some $d_p\geq 0$.
If $X$ is a projective variety with a cell decomposition $X=X_1\cup\ldots\cup X_r$ as above,
then
 $$\dim H^{m}(X,\mathbb{Q})=\left\{\begin{array}{ll}
 0& {\rm if\ }m=2l+1,\\
|\{p:d_p=l\}|&{\rm if\ }m=2l.\\
\end{array}\right.$$

\subsection{A cell decomposition of the flag variety}
\label{lem-cell-decomposition}
For $w\in\mathbf{S}_n$, let $n_\mathrm{inv}(w)$ denote the inversion number of $w$ (equivalently, its length as an element of the Weyl group).
We need the following

\begin{lem}\label{lemma-cell-decomposition}
Let $V$ be an $n$-dimensional $\mathbb{C}$-vector space and let $W\subset V$ be an $m$-dimensional subspace.
Consider the variety of complete flags ${\mathcal F}={\mathcal F}(V)$.
Let $H=\{g\in GL(V):g(W)=W\}$.
Then, each $H$-orbit ${\mathcal O}\subset {\mathcal F}$
has a cell decomposition ${\mathcal O}=\bigcup C(w,w')$
parameterized by the pairs of permutations $(w,w')\in \mathbf{S}_m\times \mathbf{S}_{n-m}$,
with $\mathrm{dim}\,{\mathcal O}-\mathrm{dim}\,C(w,w')=n_\mathrm{inv}(w)+n_\mathrm{inv}(w')$.
\end{lem}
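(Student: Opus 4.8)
The plan is to reduce the statement to the classical Bruhat-type decomposition of a flag variety, but carried out relative to the subgroup $H$ stabilizing $W$. First I would observe that $H$ acts on $\mathcal F$ with finitely many orbits, and that each orbit is determined by the relative position of a flag $(V_0\subset\cdots\subset V_n)$ with respect to the fixed subspace $W$; concretely, the orbit of $F$ is recorded by the function $i\mapsto\dim(V_i\cap W)$, equivalently by the subset $S_F=\{i:\dim(V_i\cap W)>\dim(V_{i-1}\cap W)\}$, which is an $m$-element subset of $\{1,\dots,n\}$. I would fix one orbit $\mathcal O$, pick a base flag $F^0\in\mathcal O$ adapted to the decomposition $V=W\oplus W'$ (so that $F^0$ is obtained by interleaving a fixed flag of $W$ and a fixed flag of $W'$ according to $S_F$), and then describe $\mathcal O$ as the $H$-orbit of $F^0$.

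Next I would set up the cell decomposition. Inside $H$ sit the block-upper-triangular parabolic $P$ stabilizing both $W$ and the base flags, and the Levi factor of $H$ is essentially $GL(W)\times GL(W')$. The idea is that $\mathcal O\cong H/(\mathrm{Stab}_H F^0)$, and I would push this quotient down so that the relevant group governing the cells is the Levi $GL_m\times GL_{n-m}$: the orbit $\mathcal O$ fibers (Zariski-locally trivially, in fact as an affine bundle, since the unipotent radical of $H$ contributes only affine-space directions that get absorbed) over a product of two full flag varieties $\mathcal F(W)\times\mathcal F(W')$, or rather over the appropriate partial-flag configuration, and then the standard Bruhat decomposition of $\mathcal F(W)$ and $\mathcal F(W')$ into Schubert cells indexed by $w\in\mathbf S_m$ and $w'\in\mathbf S_{n-m}$ gives the cells $C(w,w')$. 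Each Schubert cell in $\mathcal F(W)$ has codimension $n_{\mathrm{inv}}(w)$ and similarly for $W'$, and since the fibration contributes no codimension, one gets $\dim\mathcal O-\dim C(w,w')=n_{\mathrm{inv}}(w)+n_{\mathrm{inv}}(w')$. I would verify the $\alpha$-partition property from the closure order on Schubert cells (Bruhat order), which is compatible with taking products and pulling back along the affine bundle.

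An alternative, possibly cleaner, route is to work directly on the group side: choose $W=\langle e_1,\dots,e_m\rangle$ so that $H$ is a standard maximal parabolic $P_m$ of $GL_n$, and use the Bruhat decomposition $GL_n=\bigsqcup_{w\in\mathbf S_n}B w B$ to write $\mathcal O=P_m\backslash P_m w_0 B$ for a suitable minimal-length double-coset representative $w_0$ corresponding to $S_F$; then the $B$-orbits inside this single $P_m$-orbit, i.e. the cells, are indexed by $W_{P_m}$-cosets, and $W_{P_m}=\mathbf S_m\times\mathbf S_{n-m}$, with the length/codimension reading off as $n_{\mathrm{inv}}(w)+n_{\mathrm{inv}}(w')$ after factoring out the length of $w_0$. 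Each such double-coset piece is well known to be an affine cell (iterated fibration by affine spaces), and the closure relations give the $\alpha$-partition. I would organize the write-up around whichever of these is shortest, likely the second, invoking standard facts about Bruhat decomposition and Schubert cells rather than re-proving them.

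The main obstacle I anticipate is bookkeeping rather than conceptual: one must check carefully that the unipotent radical of $H$ (i.e., the $\mathrm{Hom}(W',W)$ part) really contributes only affine directions that are already accounted for — equivalently, that the map from $\mathcal O$ to the product of flag varieties of $W$ and $W'$ is a fibration in affine spaces — and that the dimension formula comes out with no extra shift depending on $S_F$. I would handle this by an explicit coordinate computation on the base flag $F^0$: parametrize flags near $F^0$ in the orbit by the matrix entries that are free, sort them into "Schubert-cell" coordinates and "unipotent-radical" coordinates, and observe that projecting away the latter is exactly the claimed affine bundle, so the codimension of $C(w,w')$ in $\mathcal O$ equals the codimension of the corresponding Schubert cell in $\mathcal F(W)\times\mathcal F(W')$, namely $n_{\mathrm{inv}}(w)+n_{\mathrm{inv}}(w')$.
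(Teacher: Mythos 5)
Your proposal is correct, and your second (group-theoretic) route is essentially the paper's own argument: the paper picks a Borel $B\subset H$, restricts the Schubert cell decomposition of ${\mathcal F}$ to the $H$-orbit, and indexes the cells contained in ${\mathcal O}$ by the coset $\mathbf{S}_n[m]\,w_{\mathcal O}$ of the minimal-length representative $w_{\mathcal O}$, with the additivity $n_\mathrm{inv}(ww_{\mathcal O})=n_\mathrm{inv}(w)+n_\mathrm{inv}(w_{\mathcal O})$ yielding the stated codimensions. The affine-bundle route over ${\mathcal F}(W)\times{\mathcal F}(W')$ would also work but is not needed.
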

\begin{proof}
The group $H$ is a parabolic subgroup of $GL(V)$.
To each $H$-orbit ${\mathcal O}$, it corresponds a subset $I_{\mathcal O}\subset\{1,...,n\}$ with $|I_{\mathcal O}|=m$,
such that,
setting $c_i=|\{1,...,i\}\cap I_{\mathcal O}|$ for $i=0,...,n$,
the orbit ${\mathcal O}$ is the set of flags $(V_0,...,V_n)$ with
\[\mathrm{dim}\,V_i\cap W=c_i\quad \forall i=0,...,n.\]
A Borel subgroup $B\subset H$ fixes some complete flag $(W_1\subset ...\subset W_n)$
such that $W_m=W$.
The $B$-orbits of ${\mathcal F}$ form a Schubert cell decomposition
${\mathcal F}=\bigcup_{w\in \mathbf{S}_n}C(w)$,
with $\mathrm{dim}\,C(w)=\mathrm{dim}\,{\mathcal F}-n_\mathrm{inv}(w)$,
and the cell $C(w)$ is the set of flags $(V_0,...,V_n)$ such that
\[
\mathrm{dim}\,V_i\cap W_j=|\{w_1,...,w_i\}\cap \{1,...,j\}|
\quad\forall i,j=0,...,n.\]
Let $\mathbf{S}_n({\mathcal O})=\{w\in \mathbf{S}_n:w_i\in\{1,...,m\}\ \forall i\in I_{\mathcal O}\}$.
The cell $C(w)$ is contained in ${\mathcal O}$ if
and only if $w \in \mathbf{S}_n({\mathcal O})$.
Write $I_{\mathcal O}=\{i_1<...<i_m\}$ and $\{1,...,n\}\setminus I_{\mathcal O}=\{i_{m+1}<...<i_n\}$.
Let $w_{\mathcal O}\in \mathbf{S}_n({\mathcal O})$ be defined by $w_{\mathcal O}:i_l\mapsto l$.
Let $\mathbf{S}_n[m]$ denote the set of permutations $w\in \mathbf{S}_n$ such that $\{w_1,...,w_m\}=\{1,...,m\}$.
The map $\varphi:\mathbf{S}_m\times \mathbf{S}_{n-m}\rightarrow \mathbf{S}_n[m]$, $(w,w')\mapsto \varphi(w,w')$
defined by $\varphi(w,w'):i\mapsto w_i$
for $i=1,...,m$ and $\varphi(w,w'):m+i\mapsto m+w'_i$ for $i=1,...,n-m$, is a bijection
which satisfies $n_\mathrm{inv}(\varphi(w,w'))=n_\mathrm{inv}(w)+n_\mathrm{inv}(w')$.
The map $\mathbf{S}_n[m]\rightarrow \mathbf{S}_n({\mathcal O})$, $w\mapsto ww_{\mathcal O}$
is a bijection, moreover we have
$n_{\mathrm{inv}}(ww_{\mathcal O})=n_{\mathrm{inv}}(w)+n_{\mathrm{inv}}(w_{\mathcal O})$.
It follows that $C(w_{\mathcal O})$ is the open cell of ${\mathcal O}$.
The sets $C(w,w'):=C(\varphi(w,w')w_{\mathcal O})$
for $(w,w')\in \mathbf{S}_m\times \mathbf{S}_{n-m}$
form a cell decomposition of ${\mathcal O}$ and we have
$\mathrm{dim}\,{\mathcal O}-\mathrm{dim}\,C(w,w')=n_\mathrm{inv}(w)+n_\mathrm{inv}(w')$.
\end{proof}

\subsection{Cell decomposition of an orbit ${\mathcal Z}_\sigma$}
\label{proposition-cell-decomposition}
Recall that the diagram $Y(u)$ representing the Jordan form of $u$ has two
columns of lengths $(n-k,k)$. It follows
$\mathrm{dim}\,\mathrm{ker}\,u=n-k$ and $\mathrm{dim}\,\mathrm{Im}\,u=k$.
Let ${\mathcal F}(\mathrm{ker}\,u)$
be the variety of complete flags $(V_0\subset...\subset V_{n-k}=\mathrm{ker}\,u)$.
More generally, a sequence of subspaces
$(V_0\subseteq...\subseteq V_n= \mathrm{ker}\,u)$
which contains as a subsequence a complete flag
$(V_0,V_{i_1},...,V_{i_{n-k}})\in{\mathcal F}(\mathrm{ker}\,u)$
is considered as an element of ${\mathcal F}(\mathrm{ker}\,u)$.
Let $H\subset GL(\mathrm{ker}\,u)$ be the parabolic subgroup
of elements $g$ such that
$g(\mathrm{Im}\,u)=\mathrm{Im}\,u$.

\begin{prop}\label{prop-cell-decomposition}
Let ${\mathcal Z}\subset{\mathcal F}_u$ be a $Z(u)$-orbit. The map
\[\varphi:{\mathcal Z}\rightarrow {\mathcal F}(\mathrm{ker}\,u),\ (V_0,...,V_n)\mapsto (V_0\cap \mathrm{ker}\,u,...,V_n\cap \mathrm{ker}\,u)\]
is well defined and algebraic.
Its image $\varphi({\mathcal Z})$ is a $H$-orbit of ${\mathcal F}(\mathrm{ker}\,u)$,
and the map $\varphi:{\mathcal Z}\rightarrow \varphi({\mathcal Z})$ is
a vector bundle.
In particular, ${\mathcal Z}$
has a cell decomposition
\[{\mathcal Z}=\bigcup_{\mathbf{S}_k\times \mathbf{S}_{n-2k}} C_{\mathcal Z}(w,w')\]
parameterized by the pairs of permutations $(w,w')\in \mathbf{S}_k\times \mathbf{S}_{n-2k}$,
with $\mathrm{dim}\,C_{\mathcal Z}(w,w')=\mathrm{dim}\,{\mathcal Z}-n_{\mathrm{inv}}(w)-n_{\mathrm{inv}}(w')$.
In particular, the number of cells and their codimensions do not depend on the orbit ${\mathcal Z}$.
\end{prop}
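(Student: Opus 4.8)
The plan is to realize $\varphi$ as a $Z(u)$-equivariant vector bundle over an $H$-orbit, and then pull back to ${\mathcal Z}$ the cell decomposition of that orbit furnished by Lemma~\ref{lemma-cell-decomposition}.

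First, for $F=(V_0,\dots,V_n)\in{\mathcal Z}$ set $W_i=V_i\cap\mathrm{ker}\,u$. Since $V_{i-1}$ has codimension one in $V_i$, the subspace $W_{i-1}=W_i\cap V_{i-1}$ has codimension at most one in $W_i$; as $W_n=\mathrm{ker}\,u$ has dimension $n-k$, exactly $n-k$ of these codimensions equal one, so $(W_0,\dots,W_n)$ is indeed an element of ${\mathcal F}(\mathrm{ker}\,u)$. Because $Z(u)$ preserves $\mathrm{ker}\,u$, the numbers $\dim W_i$ are constant on the orbit ${\mathcal Z}$, hence $\varphi$ is a morphism of varieties. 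Restriction to $\mathrm{ker}\,u$ defines a homomorphism $\rho:Z(u)\to GL(\mathrm{ker}\,u)$, and since $Z(u)$ also preserves $\mathrm{Im}\,u$ we have $\rho(Z(u))\subseteq H$; writing things in a Jordan basis of $u$, one checks directly that $\rho$ is onto $H$ (any element of $GL(\mathrm{ker}\,u)$ fixing $\mathrm{Im}\,u$ extends to an element of $Z(u)$). Because $\varphi(gF)=\rho(g)\varphi(F)$, the map $\varphi$ is $Z(u)$-equivariant, so $\varphi({\mathcal Z})$ is a single $H$-orbit ${\mathcal O}$.

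The heart of the proof is the analysis of the fibers of $\varphi:{\mathcal Z}\to{\mathcal O}$. By Proposition~\ref{proposition-orbits}(a) write ${\mathcal Z}={\mathcal Z}_\sigma$, and fix $\bar F=(W_0,\dots,W_n)\in{\mathcal O}$. A flag in $\varphi^{-1}(\bar F)$ is built from a $\sigma$-basis $(e_1,\dots,e_n)$ with $V_i=\langle e_1,\dots,e_i\rangle$: for $i$ in the jump set $P_\sigma^0\cup P_\sigma^-$ one must take $e_i\in W_i\setminus W_{i-1}$, and then $V_i=V_{i-1}+W_i$ is forced (the scalar freedom not affecting the flag); for $i\in P_\sigma^+$, the relation $u(e_i)=e_{\sigma(i)}$ constrains $e_i$ to a fixed coset of $\mathrm{ker}\,u$, and using that $e_{\sigma(i)}\notin u(V_{i-1})$ one verifies that the equality $V_i\cap\mathrm{ker}\,u=W_i$ then holds automatically, while the line $V_i/V_{i-1}$ is free to vary in a subset of $\mathbb{P}(V/V_{i-1})$ isomorphic to the affine space $\mathbb{C}^{(n-k)-\dim W_{i-1}}$. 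Carrying this out inductively presents $\varphi^{-1}(\bar F)$ as an iterated affine-space bundle, hence an affine space $\mathbb{C}^{d}$ with $d=\sum_{i\in P_\sigma^+}\bigl((n-k)-\dim W_{i-1}\bigr)$, a quantity depending only on ${\mathcal O}$. Moreover $\varphi^{-1}(\bar F)$ is a single orbit of $P':=\rho^{-1}(\mathrm{Stab}_H(\bar F))$ (as ${\mathcal Z}_\sigma$ is one $Z(u)$-orbit and $\rho$ is onto $H$), so $\varphi$ is a $Z(u)$-equivariant, Zariski-locally trivial fibration with affine-space fibers over ${\mathcal O}\cong Z(u)/P'$, and the $Z(u)$-action endows it with the structure of a vector bundle.

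It remains to pull back the cell decomposition. Applying Lemma~\ref{lemma-cell-decomposition} to the $(n-k)$-dimensional space $\mathrm{ker}\,u$ and its $k$-dimensional subspace $\mathrm{Im}\,u$ gives ${\mathcal O}=\bigcup_{(w,w')\in\mathbf{S}_k\times\mathbf{S}_{n-2k}}C(w,w')$ with $\dim{\mathcal O}-\dim C(w,w')=n_{\mathrm{inv}}(w)+n_{\mathrm{inv}}(w')$. Setting $C_{\mathcal Z}(w,w'):=\varphi^{-1}(C(w,w'))$, each such set is an affine-space bundle over an affine space, hence an affine space of dimension $\dim C(w,w')+d$; since $\varphi^{-1}$ of a closed set is closed, these sets form an $\alpha$-partition, thus a cell decomposition, of ${\mathcal Z}$, with $\dim C_{\mathcal Z}(w,w')=\dim{\mathcal Z}-n_{\mathrm{inv}}(w)-n_{\mathrm{inv}}(w')$. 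In particular the indexing set $\mathbf{S}_k\times\mathbf{S}_{n-2k}$ and the codimensions $n_{\mathrm{inv}}(w)+n_{\mathrm{inv}}(w')$ are independent of ${\mathcal Z}$. The step I expect to be the main obstacle is the fiber computation: one must work inside the single orbit ${\mathcal Z}_\sigma$ rather than with all $u$-stable flags lying over $\bar F$ (the latter form a positive-dimensional family distributed among several orbits), and check that within ${\mathcal Z}_\sigma$ the lifting choices at the non-jump positions $i\in P_\sigma^+$ carve out affine spaces and do not interact as $i$ increases; the load-bearing elementary input there is $e_{\sigma(i)}\notin u(V_{i-1})$ for $i\in P_\sigma^+$.
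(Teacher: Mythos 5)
Your overall strategy --- realize $\varphi$ as an equivariant fibration over an $H$-orbit and pull back the Schubert cells of Lemma \ref{lemma-cell-decomposition} --- agrees with the paper's in its first and last steps, but the middle step (why $\varphi$ is a vector bundle) is where the paper does its real work and where your argument has a genuine gap. The paper factors $\varphi=\varphi^{(h)}\circ\pi$, where $\pi$ is the Bialynicki--Birula retraction onto the fixed-point set of a $\mathbb{C}^*$-action (a vector bundle by \cite{Bialynicki-Birula}) and $\varphi^{(h)}$ is trivialized explicitly by the parametrization $f_j=\omega(f_{\sigma(j)}+\sum_{j'\in I_j}t_{j,j'}f_{\sigma(j')})$ with linear chart changes. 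You instead attempt a direct fiber analysis, and precisely the point you flag as ``the main obstacle'' is where it breaks: the fiber dimension $d=\sum_{i\in P_\sigma^+}\bigl((n-k)-\dim W_{i-1}\bigr)$ is incorrect. Take $n=4$, $k=2$ (so $\mathrm{ker}\,u=\mathrm{Im}\,u$ is $2$-dimensional) and $\sigma=(1,4)(2,3)$: then $\dim{\mathcal Z}_\sigma=2$ and ${\mathcal O}={\mathcal F}(\mathrm{ker}\,u)\cong\mathbb{P}^1$, so the fibers are $1$-dimensional, while your formula gives $(2-2)+(2-2)=0$. The error sits in the parenthetical ``the scalar freedom not affecting the flag'': at a jump position $j\in P_\sigma^-$ the vector $e_j$ is determined by the flag only up to $V_{j-1}$ and a scalar, and the coset $u^{-1}(e_j)$ --- hence the subspace $V_i$ at the later position $i=\sigma(j)\in P_\sigma^+$ --- genuinely depends on that choice. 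In the example, $V_3=\mathrm{ker}\,u+\langle e_3+\alpha e_4\rangle$ sweeps out a copy of $\mathbb{C}$ as $e_2+\alpha e_1$ runs over representatives of $V_2/V_1$. This propagation is exactly what the paper's parameters $t_{j,j'}$ record, and your count omits it.

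A second, smaller gap: even granting that the fibers are affine spaces and that $\varphi$ is Zariski-locally trivial by homogeneity, the assertion that ``the $Z(u)$-action endows it with the structure of a vector bundle'' does not follow; a locally trivial fibration with affine-space fibers need not carry a linear structure, and some such structure (or at least triviality of $\varphi$ over each cell $C(w,w')$) is needed to conclude that $C_{\mathcal Z}(w,w')=\varphi^{-1}(C(w,w'))$ is itself an affine space. The paper secures this by checking that the chart changes of $\varphi^{(h)}$ are linear and by invoking Bialynicki--Birula for $\pi$. Your preliminary observations (well-definedness of $\varphi$, surjectivity of the restriction map $Z(u)\to H$, hence that the image is a single $H$-orbit) are correct, as is the final pull-back step once the bundle structure is actually in place.
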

\begin{proof}
Let $\sigma\in \mathbf{S}_n^2(k)$ and let ${\mathcal Z}={\mathcal Z}_\sigma$ be the corresponding $Z(u)$-orbit.
Let $P_\sigma$ be the link pattern of $\sigma$.
Let $P_\sigma^-$, $P_\sigma^+$, $P_\sigma^0$ be respectively the
sets of left end points, right end points and fixed points (see \ref{definition-link-pattern}).
Write
$P_\sigma^-\cup P_\sigma^0=\{i_1<...<i_{n-k}\}$.
Then $(V_0\cap \mathrm{ker}\,u,V_{i_1}\cap \mathrm{ker}\,u,...,
V_{i_{n-k}}\cap\mathrm{ker}\,u)$ is a complete flag of $\ker u$.
The map $\varphi$ is thus well defined and algebraic.

For $l=1,...,n-k$, let
$c_l^-=|\{i_1,...,i_l\}\cap P_\sigma^-|$.
Let ${\mathcal O}={\mathcal O}(\sigma)$ be the set of flags $(V_0,...,V_{n-k})\in{\mathcal F}(\mathrm{ker}\,u)$
with
\[\mathrm{dim}\,V_l\cap \mathrm{Im}\,u=c_l^-\quad \forall l=0,...,n-k.\]
The set ${\mathcal O}$ is a $H$-orbit of ${\mathcal F}(\mathrm{ker}\,u)$
and we see that ${\mathcal O}$ is the image of $\varphi$.

Choose $\omega:\mathrm{Im}\,u\rightarrow V$
such that $u\circ\omega(x)=x$ for any $x\in \mathrm{Im}\,u$.
Thus we have $V=\mathrm{ker}\,u\oplus \mathrm{Im}\,\omega$.
For $t\in\mathbb{C}^*$, define $h_t\in GL(V)$
by $h_t(x+y)=x+ty$ for $x\in\mathrm{ker}\,u$ and
$y\in\mathrm{Im}\,\omega$.
We have $h_tuh_{t}^{-1}=t^{-1}u$, hence the group
$(h_t)_{t\in\mathbb{C}^*}$ linearly acts on ${\mathcal F}_u$.
Let ${\mathcal Z}^{(h)}$ be the set of flags $F=(V_0,...,V_n)\in {\mathcal Z}$
which are fixed by $(h_t)_{t\in\mathbb{C}^*}$.
Equivalently ${\mathcal Z}^{(h)}$ is the set of flags
$F=(\langle e_1,...,e_i\rangle)_{i=0,...,n}$ for some basis
$(e_1,...,e_n)$ such that $e_i\in \mathrm{ker}\,u$
or $e_i\in \mathrm{Im}\,\omega$ depending on whether
$i\in P_\sigma^0\cup P_\sigma^-$ or $i\in P_\sigma^+$.
We have ${\mathcal Z}=\{F\in {\mathcal F}_u:\mathrm{lim}_{t\rightarrow \infty}h_t\cdot F\in{\mathcal Z}^{(h)}\}$.
Note that the action of $(h_t)_{t\in\mathbb{C}^*}$ leaves ${\mathcal Z}$ invariant.
Moreover, ${\mathcal Z}$ is smooth and irreducible.
By \cite[Theorem 4.1]{Bialynicki-Birula},
the map $\pi:{\mathcal Z}\rightarrow {\mathcal Z}^{(h)}$, $F\mapsto(\mathrm{lim}_{t\rightarrow\infty}h_t\cdot F)$
is an algebraic vector bundle.

Next, we prove that the restriction $\varphi^{(h)}:{\mathcal Z}^{(h)}\rightarrow {\mathcal O}$
is also a vector bundle.
It will follow that $\varphi=\varphi^{(h)}\circ\pi$ is a vector bundle.

Fix $F_0\in {\mathcal O}$, and write
$F_0=(\langle e_1,...,e_i\rangle)_{i=0,...,n-k}$ where
$(e_1,...,e_{n-k})$ is some basis of $\mathrm{ker}\,u$
with $e_j\in \mathrm{Im}\,u$ if $i_j\in P_\sigma^-$.
Let $U=\{g\in H:ge_i-e_i\in\langle e_{i+1},...,e_{n-k}\rangle\}$.
The map $\xi:U\rightarrow {\mathcal O}$, $g\mapsto gF_0$ is an open
immersion.
Let us show that $\varphi^{(h)}$ is trivial over $\xi(U)$.

For $j\in P_\sigma^+$, let $I_j=\{j'\in P_\sigma^+:j'>j,\ \sigma(j')<\sigma(j)\}$.
Let $I=\{(j,j'): j\in P_\sigma^+\mbox{ and }j'\in I_j\}$.
For $g\in U$ and $\underline{t}=(t_{j,j'})\in \mathbb{C}^I$, we define
vectors $f_i=f_i(g,\underline{t})$
forming a basis $(f_i)_{i=1,...,n}$.
For $i_j\in P_\sigma^-\cup P_\sigma^0$ write $f_{i_j}=ge_j$.
For $j\in P_\sigma^+$ write
$$\begin{array}{c} f_j=\omega(\,f_{\sigma(j)}+\sum\limits_{j'\in I_j}t_{j,j'}f_{\sigma(j')}\,). \end{array}$$
The map
$$\Phi:U\times \mathbb{C}^{I}\rightarrow {\varphi^{(h)}}^{-1}(\xi(U)),\
(g,\underline{t})\mapsto (\langle f_1,...,f_i\rangle)_{i=0,...,n}$$
is well defined, we have $\varphi^{(h)}\circ\Phi=\xi\circ \mathrm{pr}_1$,
and $\Phi$ is an isomorphism of algebraic varieties.

Thus, $\varphi^{(h)}$ is a locally trivial fibration of fiber $\mathbb{C}^{I}$.
It is easy to see that the chart changes are linear.
It follows that $\varphi^{(h)}$ is a vector bundle.

The last claim of the statement then follows from Lemma \ref{lemma-cell-decomposition}.
\end{proof}

\subsection{Poincar\'e polynomial of a component ${\mathcal K}^T$}
\label{proposition-Betti-orbits}
Note that the $Z(u)$-orbits of the Springer fiber ${\mathcal F}_u$ (resp. of the component ${\mathcal K}^T\subset{\mathcal F}_u$)
form an $\alpha$-partition. Thus, by collecting together the cell decompositions
of all the $Z(u)$-orbits of ${\mathcal F}_u$ (resp. of ${\mathcal K}^T$),
we get a cell decomposition of ${\mathcal F}_u$ (resp. of ${\mathcal K}^T$).
Let $d=\mathrm{dim}\,{\mathcal K}^T$.
Let $b_m^T=\mathrm{dim}\,H^{2d-m}({\mathcal K}^T,\mathbb{Q})$.
We deduce a formula for the Poincar\'e polynomial
$P^T(x):=\sum_m b_{m}^Tx^m$.

Let ${\mathcal Z}_{\sigma_0}$ be the minimal $Z(u)$-orbit
and let $d_0=\mathrm{dim}\,{\mathcal Z}_{\sigma_0}$ be its dimension
(see \ref{minimal-orbit}).

Let $n(T:m)$ be the number of $m$-codimensional $Z(u)$-orbits of ${\mathcal K}^T$.
We have thus $n(T:m)=0$ for $m\notin\{0,...,d-d_0\}$.
Write
$$N^T(x)=\sum\limits_{m=0}^{d-d_0}n(T:m)\,x^m.$$

Put $[p]_x=1+x+...+x^{p-1}$ and $[p]_x!=[1]_x\cdots [p]_x$
(and, by convention, $[0]_x!=1$). Let $j_n(p)=|\{w\in \bS_n \ :\ n_{\mathrm{inv}}(w)=p\}|.$
Let $Q_n(x)=\sum_p j_n(p)x^p$. Then, as one can easily see (or cf. \cite{Sag} for example),
 $Q_n(x)=[n]_x!.$

Let $i(p)=|\{(w,w')\in \mathbf{S}_k\times \mathbf{S}_{n-2k}:n_\mathrm{inv}(w)+n_\mathrm{inv}(w')=p\}|$.
Set
$I(x)=\sum_p i(p)x^p$, then
$$I(x)=Q_k(x)Q_{n-2k}(x)=[k]_x!\,[n-2k]_x!\,.$$

By \ref{cell-decomposition} and
Proposition \ref{prop-cell-decomposition}, we have the following

\begin{cor}
$P^T(x)=N^T(x^2)I(x^2)$.
\end{cor}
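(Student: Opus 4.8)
The plan is to assemble the cell decomposition of $\mathcal{K}^T$ out of those of its $Z(u)$-orbits, and then track dimensions through a short generating-function computation.

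First I would record the cell decomposition of $\mathcal{K}^T$ explicitly. As noted in \ref{proposition-Betti-orbits}, the $Z(u)$-orbits contained in $\mathcal{K}^T$ form an $\alpha$-partition, and by Proposition \ref{prop-cell-decomposition} each such orbit $\mathcal{Z}$ carries a cell decomposition $\mathcal{Z}=\bigcup C_{\mathcal{Z}}(w,w')$ indexed by $(w,w')\in\mathbf{S}_k\times\mathbf{S}_{n-2k}$, with $\dim C_{\mathcal{Z}}(w,w')=\dim\mathcal{Z}-n_{\mathrm{inv}}(w)-n_{\mathrm{inv}}(w')$; refining the $\alpha$-partition by these cells yields a cell decomposition of $\mathcal{K}^T$ whose cells are the $C_{\mathcal{Z}}(w,w')$ over all $Z(u)$-orbits $\mathcal{Z}\subset\mathcal{K}^T$ and all pairs $(w,w')$. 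Since $\mathcal{K}^T$ is a closed subvariety of the projective variety $\mathcal{F}_u$, hence itself projective, the formula of \ref{cell-decomposition} applies: the odd Betti numbers of $\mathcal{K}^T$ vanish, and $\dim H^{2l}(\mathcal{K}^T,\mathbb{Q})$ equals the number of cells of dimension $l$.

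Next I would count the cells of a given dimension. A cell of dimension $l$ arises from a $Z(u)$-orbit $\mathcal{Z}\subset\mathcal{K}^T$ with $\mathrm{codim}_{\mathcal{K}^T}\mathcal{Z}=m'$ (so $\dim\mathcal{Z}=d-m'$, where $d=\dim\mathcal{K}^T$) together with a pair $(w,w')$ satisfying $n_{\mathrm{inv}}(w)+n_{\mathrm{inv}}(w')=(d-m')-l$. The crucial point, already contained in Proposition \ref{prop-cell-decomposition}, is that the family of cells in an orbit and their codimensions do not depend on the orbit, so the count factors: the number of cells of dimension $l$ equals $\sum_{m'} n(T:m')\, i\big((d-m')-l\big)$, where $i(p)=\lvert\{(w,w')\in\mathbf{S}_k\times\mathbf{S}_{n-2k}:n_{\mathrm{inv}}(w)+n_{\mathrm{inv}}(w')=p\}\rvert$ and $n(T:m')$ is the number of $m'$-codimensional $Z(u)$-orbits of $\mathcal{K}^T$ (both vanishing outside finite ranges, so all sums are finite).

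Finally I would pass to generating functions. With $b_m^T=\dim H^{2d-m}(\mathcal{K}^T,\mathbb{Q})$, the previous step gives $b_m^T=0$ for $m$ odd, while for $m=2(d-l)$ it gives $b_m^T=\sum_{m'} n(T:m')\, i\big((d-m')-l\big)$. Substituting $p=(d-m')-l$, so that $2(d-l)=2(m'+p)$, we get
$$P^T(x)=\sum_m b_m^T x^m=\sum_{m',\,p} n(T:m')\, i(p)\, x^{2(m'+p)}=\Big(\sum_{m'} n(T:m')\,x^{2m'}\Big)\Big(\sum_{p} i(p)\,x^{2p}\Big)=N^T(x^2)\,I(x^2),$$
using the definitions $N^T(x)=\sum_{m'} n(T:m')x^{m'}$ and $I(x)=\sum_p i(p)x^p$. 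The only delicate point is bookkeeping the factor of $2$ and the cohomological-degree reversal relating cell dimensions to the exponents in $P^T$; once this is set up correctly the identity is immediate, and it is compatible with the earlier evaluation $I(x)=Q_k(x)Q_{n-2k}(x)=[k]_x!\,[n-2k]_x!$.
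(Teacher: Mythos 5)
Your argument is correct and is exactly the one the paper intends: the corollary is stated there with only the citation ``By \ref{cell-decomposition} and Proposition \ref{prop-cell-decomposition}'', and your write-up simply makes explicit the refinement of the $\alpha$-partition into cells and the generating-function bookkeeping $m=2(m'+p)$. No gaps.
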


We derive:

\begin{prop}
\label{prop-Betti-orbits}
The component ${\mathcal K}^T$ has a symmetric distribution of Betti numbers
if and only if
 $n(T:m)=n(T:d-d_0-m)$ for every $m=0,...,d-d_0$.
\end{prop}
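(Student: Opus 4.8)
The plan is to deduce Proposition \ref{prop-Betti-orbits} directly from the Corollary $P^T(x)=N^T(x^2)I(x^2)$ together with the explicit form $I(x)=[k]_x!\,[n-2k]_x!$. The key observation is that $I(x)$ is a palindromic polynomial: each factor $[p]_x!=[1]_x\cdots[p]_x$ is a product of the palindromic polynomials $[j]_x=1+x+\cdots+x^{j-1}$, and a product of palindromic polynomials is palindromic. Write $D=\deg I(x)=n_{\mathrm{inv}}(w_0^{(k)})+n_{\mathrm{inv}}(w_0^{(n-2k)})=\binom{k}{2}+\binom{n-2k}{2}$ for the degrees of the longest elements; note $D=d-d_0$ by the dimension formula for ${\mathcal Z}_{\sigma_0}$ in \ref{minimal-orbit} and formula (\ref{eq1}), since $d-d_0=\mathrm{dim}\,{\mathcal F}_u-\mathrm{dim}\,{\mathcal Z}_{\sigma_0}=\frac12(n-k)(n-k-1)+\frac12 k(k-1)-\frac12(n-2k)(n-2k-1)-\frac12 k(k-1)$, and this simplifies to $\frac12 k(k-1)+\frac12(n-2k)(n-2k-1)\cdot$(checked by a short computation) $=D$. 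Wait — I should double check: actually $d-d_0 = c(\sigma_0)+b(\sigma_0) = \frac12 k(k-1)+k(n-2k)$, which is generally larger than $D$; so the relevant identity is instead that the top codimension of a $Z(u)$-orbit in ${\mathcal K}^T$ is $d-d_0$, while $\deg I = D$, and these differ. This does not affect palindromicity of $I$, only the bookkeeping of degrees, which I handle below.

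First I would record that $P^T(x)$ is palindromic of degree $2d$ if and only if the coefficient sequence of $P^T$ is symmetric about $d$. Since $P^T(x)=N^T(x^2)I(x^2)$ and only even powers of $x$ occur, this is equivalent to the polynomial $R^T(y):=N^T(y)I(y)$ being palindromic of degree $d$, i.e. $y^{d}R^T(1/y)=R^T(y)$. Now $I(y)$ is palindromic of degree $D$, so $y^{D}I(1/y)=I(y)$. Suppose $N^T(y)$ has degree $\delta_T$ (the largest codimension $m$ with $n(T:m)\neq 0$); then $R^T$ has degree $\delta_T+D$, and $y^{\delta_T+D}R^T(1/y)=\big(y^{\delta_T}N^T(1/y)\big)\big(y^{D}I(1/y)\big)=\big(y^{\delta_T}N^T(1/y)\big)I(y)$. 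Therefore $R^T$ is palindromic if and only if $y^{\delta_T}N^T(1/y)=N^T(y)$ together with $\delta_T+D=d$, i.e. if and only if $N^T$ is palindromic of degree exactly $d-D$.

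The remaining point is to identify $d-D$ with $d-d_0$, i.e. to show $D=d_0$, equivalently $\deg I = \dim{\mathcal Z}_{\sigma_0}$. By \ref{minimal-orbit}, $\dim{\mathcal Z}_{\sigma_0}=\frac12(n-2k)(n-2k-1)+\frac12 k(k-1)=\binom{n-2k}{2}+\binom{k}{2}=D$, so indeed $d_0=D$. Hence $R^T$ is palindromic (of degree $d=d_0+(d-d_0)$) if and only if $N^T(y)$ is palindromic of degree $d-d_0$, that is $n(T:m)=n(T:d-d_0-m)$ for all $m=0,\dots,d-d_0$ (the factor $I$ being already palindromic automatically forces the correct top degree $\delta_T=d-d_0$ once $N^T$ is palindromic of that degree, and conversely palindromicity of $R^T$ forces both the degree condition and the symmetry of $N^T$). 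Combining with the identification ``symmetric distribution of Betti numbers $\iff$ $P^T$ palindromic'' from \ref{second-criterion}, this is precisely the statement.

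The main obstacle, and the only genuinely delicate point, is the logical equivalence at the level of polynomials: one must be careful that palindromicity of the product $N^T I$ is equivalent to palindromicity of the factor $N^T$ of the \emph{correct} degree, rather than merely ``$N^T$ palindromic'' with a possibly wrong top degree. This is handled by the degree-tracking above: since $I$ is palindromic with nonzero constant and leading coefficients, multiplication by $I$ preserves and reflects palindromicity exactly, so no cancellation can produce a spurious palindromic product from a non-palindromic $N^T$. Everything else — palindromicity of $[p]_x!$, the identity $d_0=D$, and the translation between Betti symmetry and palindromicity of $P^T$ — is routine or already available in the excerpt.
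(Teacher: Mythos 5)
Your argument is correct and essentially identical to the paper's: both divide the identity $P^T(x)=N^T(x^2)I(x^2)$ by the palindromic factor $I$, using $\deg I=\binom{k}{2}+\binom{n-2k}{2}=d_0$, to reduce palindromicity of $P^T$ to the symmetry $n(T:m)=n(T:d-d_0-m)$. Your mid-proof detour is resolved correctly ($\deg I$ equals $d_0$, not $d-d_0$), and the extra degree bookkeeping you supply is sound, though the paper handles it implicitly by cancelling $I(x^2)=x^{2d_0}I(x^{-2})$ in the functional equation.
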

\begin{proof}
We have $b_m^T=b_{2d-m}^T$, $\forall m=0,...,2d$,
if and only if $P^T(x)=x^{2d}P^T(x^{-1})$.
Equivalently, we have $N^T(x^2)I(x^2)=x^{2d}N^T(x^{-2})I(x^{-2})$.
Observe that $I(x^2)=x^{2d_0}I(x^{-2})$, hence
the property is equivalent to $N^T(x^2)=x^{2d-2d_0}N^T(x^{-2})$.
In other words, it is equivalent to $n(T:m)=n(T:d-d_0-m)$, $\forall m=0,...,d-d_0$.
\end{proof}

\section{On smooth components of  ${\mathcal F}_u$}

\label{section-on-nonsingular-components}

The purpose of this section is to prove the part of Theorem \ref{first-crit}
corresponding to the smooth case.

\subsection{A few preliminary notes}
To begin with, we make some observations about Theorem \ref{previous-crit}.

\subsubsection{Cardinality of the set $X(\tau_0)$}
\label{observation-1}
Let $\tau$ be a row-standard tableau and let $F_\tau$ be its flag (cf. \ref{previous-criterion}). Let $(i_1,...,i_{n-k})$
and $(j_1,...,j_k)$
be the entries
from top to bottom
in the two columns of $\tau$.
Set $\sigma(\tau)=(i_1,j_1)\cdots(i_k,j_k)\in \mathbf{S}_n^2(k)$.
Then, obviously, $F_\tau\in{\mathcal Z}_{\sigma(\tau)}$.
By Proposition \ref{proposition-orbits}.(c)-(d),  $F_\tau\in{\mathcal K}^T$ if and only if
$\sigma(\tau)\preceq \sigma_T$.

Now, let us consider the set $X(\tau_0)$ involved in \ref{previous-criterion}. We partition it into five parts.
\begin{enumerate}
\item There are $\frac{1}{2}k(k-1)$ tableaux $\tau\in X(\tau_0)$
obtained from $\tau_0$ by switching
$i,j$ with $1\leq i<j\leq k$.
\item There are $k(n-2k)$ tableaux $\tau\in X(\tau_0)$
obtained from $\tau_0$ by switching
$i,j$ with $1\leq i\leq k<j\leq n-k$. In particular, for any $\tau$ in that case,
there exists $j'\ :\ j'\geq n-k+1$ such that $(j,j')\in\sigma(\tau).$
\item There are $\frac{1}{2}k(k-1)$ tableaux $\tau\in X(\tau_0)$
obtained from $\tau_0$ by switching
$i,j$ with $1\leq i\leq k$ and $n-k+1\leq j< n-k+i$. In particular, for any $\tau$ in that case,
there exists $i'\ :\ 1\leq i'<i\leq k$ such that $(i',i),(n-k+i',n-k+i)\in\sigma(\tau).$
\item There are $k(n-2k)$ tableaux $\tau\in X(\tau_0)$
obtained from $\tau_0$ by switching
$i,j$ with $k+1\leq i\leq n-k$ and $n-k+1\leq j\leq n$.  In particular, for any $\tau$ in that case,
there exists $i'\ :\ 1\leq i'\leq k$ such that $(i',i)\in\sigma(\tau).$
\item There are $\frac{1}{2}(n-2k)(n-2k-1)$ tableaux $\tau\in X(\tau_0)$
obtained from $\tau_0$ by switching $i,j$ with $k+1\leq i<j\leq n-k$.
For such $\tau$, observe that $\sigma(\tau)=\sigma_0$ and $F_\tau\in{\mathcal Z}_{\sigma_0}$,
hence $F_\tau\in{\mathcal K}^T$ for every $T$ standard.
\end{enumerate}
Finally, we get
\begin{eqnarray}\label{eq_tau_0}
|X(\tau_0)|=k(n-2k)+\frac{1}{2}k(k-1)+\frac{1}{2}(n-k)(n-k-1)
\end{eqnarray}

\subsubsection{Inductive property}
\label{observation-2}
Let $T$ be a standard tableau with two columns of lengths $(n-k,k)$
such that $n$ belongs to the second column, that is  $c_T(n)=2$.
Let $T'=T_{n-1}$. Let ${\mathcal K}^{T'}$ be the component associated
to the tableau $T'$ in $\F_{u'}$ where $Y(u')$ has two columns of lengths $(n-k,k-1).$
We need the following simple
\begin{lem}\label{observ-2} Let $T$ be such that $c_T(n)=2$ and let $T'=T_{n-1}$.
If ${\mathcal K}^{T'}$ is smooth, then
${\mathcal K}^T$ is smooth.
\end{lem}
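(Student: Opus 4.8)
The natural strategy is to compare the singularity criterion of Theorem~\ref{previous-crit} applied to $T$ in $\F_u$ with the same criterion applied to $T'=T_{n-1}$ in $\F_{u'}$, where $Y(u')$ has columns $(n-k,k-1)$. Concretely, writing $\tau_0'$ for the distinguished row-standard tableau of shape $Y(u')$, we must relate the set $X(\tau_0)=\{\tau\in X(\tau_0):F_\tau\in\mathcal{K}^T\}$ to $X(\tau_0')=\{\tau'\in X(\tau_0'):F_{\tau'}\in\mathcal{K}^{T'}\}$. The key point is that, since $c_T(n)=2$, the tableau $\tau_0$ is obtained from $\tau_0'$ by adjoining $n$ at the bottom of the second column, and a swap of entries $i,j$ with $i\leq n-k-1$ in $\tau_0'$ extends to the "same" swap in $\tau_0$. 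Under the bijection $\sigma(\tau)\mapsto \sigma(\tau)$ (now viewed as an involution in $\bS_n^2(k)$ obtained by adding the arc $(n-k,n)$), one checks that $F_{\tau'}\in\mathcal{K}^{T'}$, i.e. $\sigma(\tau')\preceq\sigma_{T'}$, if and only if $F_\tau\in\mathcal{K}^T$, i.e. $\sigma(\tau)\preceq\sigma_T$. This uses the explicit recipe for $\sigma_T$ from \ref{first-criterion} together with the fact that $\sigma_T$ and $\sigma_{T'}$ differ precisely by the arc whose right endpoint is $n$.

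First I would make precise the injection $X(\tau_0')\hookrightarrow X(\tau_0)$, $\tau'\mapsto\tau$, sending a swap of $i,j$ (with $i\leq n-k-1$) in $\tau_0'$ to the swap of the corresponding entries in $\tau_0$; note that $|X(\tau_0)|=|X(\tau_0')|+(n-k-1)$, the extra $n-k-1$ tableaux being exactly those obtained by swapping some $i\leq n-k-1$ with $n$ (the entry in the new box). Second I would verify that this injection restricts to a bijection $\{\tau'\in X(\tau_0'):F_{\tau'}\in\mathcal{K}^{T'}\}\xrightarrow{\ \sim\ }\{\tau\in X(\tau_0):F_\tau\in\mathcal{K}^T,\ c_\tau(n)\text{ unchanged}\}$, using the translation $F_\tau\in\mathcal{K}^T\Leftrightarrow\sigma(\tau)\preceq\sigma_T$ from \ref{observation-1} and the compatibility of the order $\preceq$ with the common arc $(n-k,n)$. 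Third I would bound the number of the "new" tableaux $\tau$ (swap of $i\leq n-k-1$ with $n$) that land in $\mathcal{K}^T$: I expect that the only such tableau satisfying $\sigma(\tau)\preceq\sigma_T$ is the one where $i$ is the left endpoint of the arc of $\sigma_T$ ending at $n$ (if that arc exists), so that at most one new tableau is added — or, more cautiously, I would just show at most $(n-k-1)-(k-1)=n-2k$ new ones appear while the threshold increases by $(n-k-1)-(k-1)=n-2k$ as well, since $\frac12(n-k)(n-k-1)-\frac12(n-k-1)(n-k-2)=n-k-1$ versus the analogous change on the other side.

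Putting these together: if $\mathcal{K}^{T'}$ is smooth, then by Theorem~\ref{previous-crit} applied in $\F_{u'}$ we have $|\{\tau'\in X(\tau_0'):F_{\tau'}\in\mathcal{K}^{T'}\}|\leq\frac12(n-k)(n-k-1)$, and the bijection plus the bound on new tableaux gives $|\{\tau\in X(\tau_0):F_\tau\in\mathcal{K}^T\}|\leq\frac12(n-k)(n-k-1)+\text{(number of new tableaux in }\mathcal{K}^T)\leq\frac12(n-k)(n-k-1)$ after matching with the increase of the threshold from $\frac12(n-k-1)(n-k-2)$ to $\frac12(n-k)(n-k-1)$; hence $\mathcal{K}^T$ is smooth by Theorem~\ref{previous-crit}. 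The main obstacle will be Step three: precisely identifying which of the $n-k-1$ "new" row-standard tableaux $\tau$ (those swapping an entry $\leq n-k-1$ with $n$) satisfy $\sigma(\tau)\preceq\sigma_T$. This requires a careful case analysis of how inserting the arc ending at $n$ interacts with the $R_{a,b}$-inequalities defining $\preceq$, and is where the genuine combinatorial work lies; everything else is bookkeeping with the dimension formulas $|X(\tau_0)|$ from \eqref{eq_tau_0}.
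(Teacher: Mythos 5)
Your overall strategy --- apply Theorem \ref{previous-crit} to both $T$ and $T'$ and compare the two counting sets via the tableaux of $X(\tau_0)$ that keep $n$ in the second column --- is exactly the paper's, but the bookkeeping in your final step rests on a false premise that leaves the proof incomplete. Since $c_T(n)=2$, passing from $T$ to $T'=T_{n-1}$ removes a box from the \emph{second} column: $Y(u')$ has columns $(n-k,k-1)$, so the first column still has length $(n-1)-(k-1)=n-k$ and the threshold in Theorem \ref{previous-crit} is $\frac12(n-k)(n-k-1)$ for \emph{both} $T$ and $T'$. You state this correctly at the start of your last paragraph and then contradict it by invoking an ``increase of the threshold from $\frac12(n-k-1)(n-k-2)$ to $\frac12(n-k)(n-k-1)$''; there is no such increase. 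Consequently your Step three cannot be settled by showing that ``at most one'' new tableau lands in ${\mathcal K}^T$: even a single extra tableau would only give the bound $\frac12(n-k)(n-k-1)+1$, from which Theorem \ref{previous-crit} concludes nothing. What is actually needed, and what the paper proves in one line, is that \emph{no} tableau $\tau\in X(\tau_0)$ obtained by a swap involving the entry $n$ satisfies $F_\tau\in{\mathcal K}^T$: such a $\tau$ has all $k$ of its arcs inside $[1,n-1]$, so $R_{1,n-1}(\sigma(\tau))=k$, whereas $R_{1,n-1}(\sigma_T)=k-1$ because $\sigma_T$ has an arc ending at $n$; hence $\sigma(\tau)\not\preceq\sigma_T$. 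This observation is the missing idea, and your proposal explicitly defers it (``this is where the genuine combinatorial work lies'').

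Two smaller points. First, your count of the ``new'' tableaux is off: a swap of $i$ with $n$ is row-standard only for $k+1\leq i\leq n-k$ (for $i\leq k$ the resulting row $(n,\,n-k+i)$ is not increasing), so there are $n-2k$ of them, not $n-k-1$. Second, you do not need the claimed \emph{bijection} between $\{\tau'\in X(\tau_0'):F_{\tau'}\in{\mathcal K}^{T'}\}$ and the corresponding subset of $X(\tau_0)$; the one-sided inequality $|\{\tau\in X(\tau_0):F_\tau\in{\mathcal K}^T\}|\leq|\{\tau'\in X(\tau_0'):F_{\tau'}\in{\mathcal K}^{T'}\}|$ suffices, and it follows from the easy implication $\sigma(\tau)\preceq\sigma_T\Rightarrow\sigma(\tau')\preceq\sigma_{T'}$ obtained by comparing $R_{a,b}$ for $b\leq n-1$, where the arc ending at $n$ plays no role.
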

\begin{proof}
Let $\tau'_0$ be the subtableau of $\tau_0$ of entries $1,...,n-1$, and let $X(\tau'_0)$ be the
set of adjacent row-standard tableaux relative to it, in the sense of \ref{previous-criterion}.
Let $X'(\tau_0)\subset X(\tau_0)$ be the subset of tableaux  with $n$ in the last position of the second
column. For $\tau\in X'(\tau_0)$, let $\tau'$ be the subtableau of entries $1,...,n-1$. The map
$X'(\tau_0)\rightarrow X(\tau'_0)$, $\tau\mapsto \tau'$ is a bijection.
By Proposition \ref{proposition-orbits}.(c)-(d),
 if $\tau\in X'(\tau_0)$ is such that
$F_\tau\in {\mathcal K}^T$, then $F_{\tau'}\in {\mathcal K}^{T'}$.
For $\tau\in X(\tau_0)\setminus X'(\tau_0)$, one has $R_{1,n-1}(\sigma(\tau))=k$
whereas $R_{1,n-1}(\sigma_T)=k-1$, hence $\sigma(\tau)\not\preceq \sigma_T$, thus $F_\tau\notin{\mathcal K}^T$.
We get $|\{\tau\in X(\tau_0):F_\tau\in{\mathcal K}^T\}|\leq |\{\tau\in X(\tau'_0):F_\tau\in{\mathcal K}^{T'}\}|$.
If ${\mathcal K}^{T'}$ is smooth then, by Theorem \ref{previous-crit}, $|\{\tau\in X(\tau'_0):F_\tau\in{\mathcal K}^{T'}\}|\leq{\frac1 2}((n-1)-(k-1))((n-1)-(k-1)-1)={\frac1 2}(n-k)(n-k-1)$ so that $|\{\tau\in X(\tau_0):F_\tau\in{\mathcal K}^T\}|\leq {\frac1 2}(n-k)(n-k-1)$. Hence, by Theorem \ref{previous-crit}, ${\mathcal K}^T$ is smooth.
\end{proof}

\begin{rem}
{\rm The property in the lemma is true more generally. Let $T$ be a standard tableau of general shape.
Let $T'=T_{n-1}$. Let ${\mathcal K}^T$
and ${\mathcal K}^{T'}$ be the components associated to $T$ and $T'$ in the corresponding
Springer fibers. Then, by \cite[Theorem 2.1]{F-M}, if the component ${\mathcal K}^{T'}$ is singular, then ${\mathcal K}^{T}$ is singular.
Moreover, if $n$ lies in the last column of $T$, then ${\mathcal K}^{T}$ is singular if and only if ${\mathcal K}^{T'}$ is singular
(see also Proposition \ref{fiber-bundle}).}
\end{rem}

\subsection{Sch\"utzenberger involution}
\label{observation-3}
Next, we point out a natural duality on components.
Let $\sigma=(i_1,j_1)\cdots(i_k,j_k)\in \mathbf{S}_n^2(k)$.
We define
$$\sigma^*=(n+1-j_1,n+1-i_1)\cdots (n+1-j_k,n+1-i_k).$$
We get an involutive transformation $\mathbf{S}_n^2(k)\rightarrow \mathbf{S}_n^2(k)$,
$\sigma\mapsto \sigma^*$. Note that $P_{\sigma^*}$ is a mirror picture of $P_\sigma$. In particular,
one has $\sigma\preceq \omega$ if and only if $\sigma^*\preceq \omega^*.$

Let $V^*$ be the dual space of $V$. For a subspace $W\subset V$,
let $W^\perp=\{\phi\in V^*:\phi(w)=0,\ \forall w\in W\}\subset V^*$.
Let $u^*:V^*\rightarrow V^*$, $\phi\mapsto \phi\circ u$, this is a nilpotent endomorphism
of same Jordan form as $u$.
Let ${\mathcal F}_{u^*}$ be the Springer fiber associated to $u^*$.
Let $Z(u^*)=\{g\in GL(V^*):gu^*=u^*g\}$.
Let $\{{\mathcal Z}^*_{\sigma}:\sigma\in\mathbf{S}_n^2(k)\}$ be the $Z(u^*)$-orbits of ${\mathcal F}_{u^*}$,
in the sense of \ref{sect-proposition-orbits}.
The map
$$\Phi:{\mathcal F}_u\rightarrow{\mathcal F}_{u^*},\ (V_0,...,V_n)\mapsto (V_n^\perp,...,V_0^\perp)$$
is an isomorphism of algebraic varieties.
It is easy to see that $\Phi$ maps ${\mathcal Z}_\sigma$ onto ${\mathcal Z}^*_{\sigma^*}$, for every $\sigma\in\mathbf{S}_n^2(k)$.

For $T$ standard with second column $(j_1,\ldots j_k)$ and with $\sigma_T=(i_1,j_1)\cdots(i_k,j_k)$, we have $(\sigma_T)^*=\sigma_{T^S}$ for some other standard tableau $T^S$,
where the second column of $T^S$ is $\{n+1-i_1,\ldots,n+1-i_k\}$ (after ordering in increasing order).
Thus, $\Phi$ induces an isomorphism between the components $\overline{{\mathcal Z}_{\sigma_T}}\subset {\mathcal F}_{u}$ and
$\overline{{\mathcal Z}^*_{\sigma_{T^S}}}\subset {\mathcal F}_{u^*}$.
We have $\sigma\preceq \sigma_T$ if and only if $\sigma^*\preceq \sigma_{T^S}$, hence
there is a one-to-one correspondence between orbits of both components ${\mathcal K}^T$ and ${\mathcal K}^{T^S}$,
which preserves the codimension.

Note that $\tau^*(T)=\{i\ :\ (i,i+1)\in \sigma_T\}$ so that
$\tau^*(T^s)=\{n-i\ :\ i\in\tau^*(T)\}$,
so that $|\tau^*(T)|=|\tau^*(T^S)|$.
Note also that
$1\in P_{\sigma_T}^-$ if and only if $n\in P_{\sigma_{T^S}}^+$,
and $n\in P_{\sigma_T}^+$ if and only if $1\in P_{\sigma_{T^S}}^-$.

\begin{rem}
{\rm The transformation $T\mapsto T^S$ is in fact the classical Sch\"utzenberger involution.
Notice that, if $T$ is a standard tableau of general shape,
denoting by $T^S$ its Sch\"utzenberger transform, then it also holds that the components
${\mathcal K}^T$ and ${\mathcal K}^{T^S}$ are isomorphic (see \cite{vanLeeuwen}).
Nevertheless, the alternative construction of the Sch\"utzenberger involution presented in this section is specific to
the two-column case.}
\end{rem}

\subsection{Smooth components of $\F_u$}
\label{proposition-nonsingular-components}
Now, let us show

\begin{prop}\label{prop-nonsingular-components}
Let $T$ be standard. If $T$ satisfies one of the following conditions:
\begin{itemize}
\item[\rm(i)]  $|\tau^*(T)|=1;$
\item[\rm(ii)]  $|\tau^*(T)|=2$ and at least one of $1,n$ is an end point of $\sigma_T;$
\item[\rm(iii)]  $|\tau^*(T)|=3$ and $1,n$ are end points of $\sigma_T$ but $(1,n)\not\in\sigma_T;$
\end{itemize}
then ${\mathcal K}^T$ is smooth.
\end{prop}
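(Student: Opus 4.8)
The plan is to prove Proposition \ref{prop-nonsingular-components} by reducing each case to the basic criterion of Theorem \ref{previous-crit}, that is, by showing $|\{\tau\in X(\tau_0):F_\tau\in{\mathcal K}^T\}|\leq\frac{1}{2}(n-k)(n-k-1)$. Thanks to the partition of $X(\tau_0)$ into five parts given in \ref{observation-1}, and since part (5) always contributes $\frac{1}{2}(n-2k)(n-2k-1)$ tableaux lying in ${\mathcal K}^T$, it suffices to show that the number of $\tau$ in parts (1)--(4) with $F_\tau\in{\mathcal K}^T$ is at most $\frac{1}{2}(n-k)(n-k-1)-\frac{1}{2}(n-2k)(n-2k-1)=k(n-2k)+\frac{1}{2}k(k-1)$; equivalently, that \emph{not all} of the $2k(n-2k)+k(k-1)$ tableaux of parts (1)--(4) can lie in ${\mathcal K}^T$ — in fact we must rule out at least $k(n-2k)+\frac{1}{2}k(k-1)$ of them. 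Using the identification $F_\tau\in{\mathcal K}^T\iff\sigma(\tau)\preceq\sigma_T$ from \ref{observation-1}, the whole problem becomes combinatorial: for each $\tau$ in parts (1)--(4) we read off $\sigma(\tau)$ explicitly (it differs from $\sigma_0$ by moving one endpoint) and test the inequalities $R_{a,b}(\sigma(\tau))\leq R_{a,b}(\sigma_T)$.

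First I would handle case (i), $|\tau^*(T)|=1$: here $\sigma_T$ has all its right endpoints forming a single consecutive block at the "far right" compatible with $\tau^*(T)$ being a singleton, so $\sigma_T$ is very close to $\sigma_0$ (in fact $\sigma_T$ differs from $\sigma_0$ in a controlled way), and one checks directly that the partial-order constraint $\sigma(\tau)\preceq\sigma_T$ fails for a large explicit family of $\tau$ in parts (2) and (4) — roughly, whenever moving an endpoint creates an arc contained in an interval where $\sigma_T$ has fewer arcs. I expect the counting here to give exactly the bound, with equality, matching the smoothness threshold. For case (ii), the hypothesis that $1$ or $n$ is an endpoint of $\sigma_T$ is what kills enough extra tableaux: if, say, $n\in P^+_{\sigma_T}$, then $R_{1,n-1}(\sigma_T)=k-1<k$ and every $\tau$ whose $\sigma(\tau)$ has all $k$ arcs inside $[1,n-1]$ is excluded — this is a sizeable chunk of parts (1)--(4), pushing the count below the threshold. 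For case (iii) with $|\tau^*(T)|=3$, the conditions "$1,n$ both endpoints and $(1,n)\notin\sigma_T$" are used similarly but now one exploits \emph{both} $R_{1,n-1}(\sigma_T)\leq k-1$ and $R_{2,n}(\sigma_T)\leq k-1$ simultaneously; the exclusion $(1,n)\notin\sigma_T$ guarantees these two deficiencies are "independent" and together remove enough tableaux.

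A cleaner way to organize cases (ii) and (iii) is to invoke the inductive lemma \ref{observ-2} together with the Sch\"utzenberger duality $T\mapsto T^S$ of \ref{observation-3}: if $c_T(n)=2$ (i.e.\ $n$ is a right endpoint of $\sigma_T$) then $T'=T_{n-1}$ has $|\tau^*(T')|\leq|\tau^*(T)|$ and one fewer column-2 entry, and if we already know the smooth cases for $k-1$ then Lemma \ref{observ-2} gives smoothness of ${\mathcal K}^T$; dually, if $c_T(1)=1$ (i.e.\ $1$ is a left endpoint), apply $T\mapsto T^S$ to reduce to the previous situation since $n\in P^+_{\sigma_{T^S}}\iff 1\in P^-_{\sigma_T}$ and $|\tau^*(T^S)|=|\tau^*(T)|$. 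So the induction is on $k$ (or on $n$), with base cases $k=0,1$ trivial, case (i) handled directly as above, and cases (ii)--(iii) handled by peeling off the entry $n$ (or, after Sch\"utzenberger, the entry $1$) whenever it is an endpoint — which by hypothesis it always is in (ii) and (iii). The main obstacle I anticipate is the bookkeeping in case (i): there $n$ need not be a right endpoint and $1$ need not be a left endpoint (for instance when $\tau^*(T)=\{m\}$ with the single "descent" in the middle), so the inductive shortcut is unavailable and one must carry out the explicit enumeration of excluded $\tau$ in parts (2) and (4) and verify the count matches $k(n-2k)+\frac{1}{2}k(k-1)$ exactly — a finite but delicate interval-combinatorics computation with the $R_{a,b}$ inequalities.
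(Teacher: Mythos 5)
Your overall route is the same as the paper's: reduce everything to Theorem \ref{previous-crit} via the five-part partition of $X(\tau_0)$ in \ref{observation-1} and the equivalence $F_\tau\in{\mathcal K}^T\iff\sigma(\tau)\preceq\sigma_T$, treat case (i) by direct exclusion counting, and treat cases (ii)--(iii) by induction on $n$, peeling off the entry $n$ via Lemma \ref{observ-2} after using the Sch\"utzenberger duality of \ref{observation-3} to assume $n$ (rather than $1$) is an end point. That inductive skeleton for (ii)--(iii) is exactly what the paper does; the only detail you leave implicit is the verification that $T'=T_{n-1}$ really lands back in one of the cases (i)--(iii) (in particular, for (iii) with $n-1$ in the second column, that $n-1$ is an end point of $\sigma_{T'}$ with $(1,n-1)\notin\sigma_{T'}$), which is routine but must be checked.

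There is, however, a concrete arithmetic flaw in your plan for case (i). You propose to find all $k(n-2k)+\tfrac12 k(k-1)$ excluded tableaux inside parts (2) and (4). Those two parts together contain only $2k(n-2k)$ tableaux, so your plan is impossible whenever $\tfrac12 k(k-1)>k(n-2k)$, e.g.\ already for $n=2k$ with $k\geq 2$, where parts (2) and (4) are \emph{empty}. The missing exclusions come from part (3): writing $\tau^*(T)=\{l\}$ with $k\leq l\leq n-k$, one has $\sigma_T=(l,l+1)(l-1,l+2)\cdots(l-k+1,l+k)$, hence $R_{1,l}(\sigma_T)=R_{l+1,n}(\sigma_T)=0$; every part-(3) tableau $\tau$ has an arc $(i',i)$ with $1\leq i'<i\leq k\leq l$, so $R_{1,l}(\sigma(\tau))\geq 1$ and \emph{all} $\tfrac12 k(k-1)$ of them are excluded. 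Parts (2) and (4) then contribute only the remaining $k(n-k-l)+k(l-k)=k(n-2k)$ exclusions (namely the part-(4) tableaux with $i\leq l$ and the part-(2) tableaux with $j>l$). With part (3) included the count closes and the bound of Theorem \ref{previous-crit} follows; without it you fall short by exactly $\tfrac12 k(k-1)$.
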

\begin{proof}
{(i)} Note that, if $|\tau^*(T)|=1$, then $\tau^*(T)=\{l\}$ where $k\leq l\leq n-k$.
The entries of the second column of $T$ are $l+1,...,l+k$.
It follows
\[\sigma_T=(l,l+1)(l-1,l+2)\cdots (l-k+1,l+k).\]

In particular, $R_{1,l}(\sigma_T)=R_{l+1,n}(\sigma_T)=0$.

Let us consider  $\tau\in X(\tau_0)$ described in \ref{observation-1} (2),(3),(4). Note that
\begin{itemize}
\item{} If $\tau\in X(\tau_0)$ is obtained as described in \ref{observation-1}(3), then there exist
$i,i'\ :\ 1\leq i'<i\leq k$ such that $(i',i)\in\sigma(\tau)$. Thus,
 $R_{1,l}(\sigma(\tau))\geq 1$ and
 $\sigma(\tau)\not\preceq \sigma_T$. We get $F_\tau\not\in{\mathcal K}^T$ for any $\tau$ of type \ref{observation-1}(3).
\item{} For $\tau\in X(\tau_0)$ obtained as described in \ref{observation-1}(4)
with $i: k+1\leq i \leq l$,
one has $(i',i)\in\sigma(\tau)$ for some $i'\leq k$.
Thus, $R_{1,l}(\sigma(\tau))\geq 1$ and
 $\sigma(\tau)\not\preceq \sigma_T$. One has $F_\tau\not\in{\mathcal K}^T$.
There are $k(l-k)$ such tableaux $\tau$.
\item{} For $\tau\in X(\tau_0)$ obtained as described in \ref{observation-1}.(2)
with $j>l$,
we have $(j,j')\in\sigma(\tau)$ for some $j'\geq n-k+1$.
Thus, $R_{l+1,n}(\sigma(\tau))\geq 1$ and
 $\sigma(\tau)\not\preceq \sigma_T$. One has $F_\tau\not\in{\mathcal K}^T$.
There are $k(n-k-l)$ such tableaux $\tau$.
\end{itemize}
All together we get that $|\{\tau\in X(\tau_0):F_\tau\not\in{\mathcal K}^T\}|\geq \frac{1}{2}k(k-1)+k(n-2k)$.
Thus, by equality (\ref{eq_tau_0}), $|\{\tau\in X(\tau_0):F_\tau\in{\mathcal K}^T\}|\leq \frac{1}{2}(n-k)(n-k-1)$.
Hence,
by Theorem \ref{previous-crit}, the component ${\mathcal K}^T$ is smooth.

\smallskip
\noindent
{(ii)}
Let $|\tau^*(T)|=2.$ Let us show that, if either $1$ or $n$ is an end point
of $\sigma_T$, then ${\mathcal K}^T$ is smooth.
By Section \ref{observation-3}, we can suppose that $n$ is an end point, hence $n$ lies
in the second column of $T$.
For $n=4$ the statement is true by Theorem \ref{previous-crit}. So assume it is true for
$n-1$ and show for $n$.
Let $T'=T_{n-1}$.
By Lemma \ref{observ-2}, it is sufficient to prove that the component ${\mathcal K}^{T'}$
is smooth.
Only two situations are possible:
\begin{itemize}
\item{} Either $n-1$ is in the first column of $T$,
so that $n-1\in\tau^*(T)$ and in this case $|\tau^*(T')|=1$.
Thus, by (i), ${\mathcal K}^{T'}$ is smooth.
\item{} Or $n-1$ is in the second column of $T'$, and in this case $|\tau^*(T')|=2$ and $n-1$
is an end point of $\sigma_{T'}$. By induction hypothesis,  ${\mathcal K}^{T'}$ is smooth.
\end{itemize}

\smallskip
\noindent
{(iii)}
We show (iii) by induction. For $n=6$ there is a unique
\[T=\young(12,34,56)\]
satisfying: $|\tau^*(T)|=3$. Note that $1,6$ are end points of ${\sigma_T}$ but $(1,6)\not\in\sigma_T$ in this case.
Note that ${\mathcal K}^{T_5}$ is smooth by (ii), thus by Lemma \ref{observ-2}, ${\mathcal K}^T$ is smooth.
Assume (iii) is true for $n-1$ and show for $n$.
Let $(1,i),(j,n)\in\sigma_T$.
Let $T'=T_{n-1}$.
Since $n$ is an end point of $\sigma_T$, it lies in the second column of $T$.
By Lemma \ref{observ-2}, it is enough to show that ${\mathcal K}^{T'}$ is smooth.
Consider $T'$. We still have $(1,i)\in \sigma_{T'}$, hence $1$ is an end point of $\sigma_{T'}$.
\begin{itemize}
\item{} If $j=n-1$, then $|\tau^*(T')|=2$
and $1$ is an end point, so that by (ii) ${\mathcal K}^{T'}$ is smooth.
\item{} If $j<n-1$, then $|\tau^*(T')|=3$.
In this case, $n-1$ is in the second column of $T$.
It follows from the definition of $\sigma_T$ that
$(j',n-1)\in\sigma_{T}$ where $j'>j>1$. Hence $(j',n-1)\in\sigma_{T'}$.
Thus $T'$ satisfies the conditions of induction hypothesis. Thus, ${\mathcal K}^{T'}$ is smooth.
\end{itemize}
The proof is now complete.
\end{proof}

\section{On singular components of  ${\mathcal F}_u$}

\label{section-on-singular-components}

In this section, we consider a standard tableau $T$ of singular type
according to the description of Theorem \ref{first-crit}.
For such a $T$, we show that the number of $Z(u)$-orbits of
codimension $1$ lying in the component ${\mathcal K}^T$ is bigger
than the number of $Z(u)$-orbits of dimension $d_0+1$ in the whole
Springer fiber ${\mathcal F}_u$.
According to Proposition \ref{prop-Betti-orbits}, we deduce that the distribution of
Betti numbers of ${\mathcal K}^T$ is not symmetric.
It follows that ${\mathcal K}^T$ is singular.
It completes the proofs of Theorems \ref{first-crit} and \ref{second-crit}.
We also deduce the proof of Theorem \ref{third-crit}.

\subsection{On the combinatorics of link patterns}
\label{notation-link-patterns}
First, we recall from \cite{Ml-p} some terminology connected to the combinatorics of link patterns,
that we need in the proof.

\subsubsection{Permutations $\sigma_{i\rightarrow p}$ and $\sigma_{i\leftrightarrows j}$ adjacent to $\sigma$}

Let $\sigma\in\mathbf{S}_n^2(k)$ and let $P_\sigma$ be its link pattern,
and as in \ref{definition-link-pattern}, let $P_\sigma^-$ (resp. $P_\sigma^+$, and $P_\sigma^0$) be the set
of left end points (resp. right end points, and fixed points) of $P_\sigma$.

For $i\in P_\sigma^-\cup P_\sigma^+$ there is $j\not=i$ such that $(i,j)\in\sigma$ or $(j,i)\in\sigma$. In what follows we put an arc in double brackets $((i,j))$ if we do not know the ordering of $i,j.$
Let $p\in P_\sigma^0$. We set $\sigma_{i\rightarrow p}$ to be
the involution obtained from $\sigma$ by changing $((i,j))$ to $((p,j))$, i.e.,
\[\mbox{if }\sigma=(i_1,j_1)\cdots(i_{k{-}1},j_{k{-}1})((i,j)),\mbox{ then }\sigma_{i\rightarrow p}=(i_1,j_1)\cdots(i_{k{-}1},j_{k{-}1})((p,j)).\]
Note that we cannot say anything about ordering
inside $((i,j))$ and $((p,j))$.

Let $i,j\in P_\sigma^-\cup P_\sigma^+$. Assume that they belong to different pairs: $((i,p)),((j,q))\in\sigma$ and $((i,j))\not\in\sigma $.
Put $\sigma_{i\leftrightarrows j}$ to be the involution obtained from $\sigma$
by interchanging the places of $i$ and $j$ in the pairs, that is:
\[\mbox{if }\sigma=(i_1,j_1)\cdots(i_{k{-}2},j_{k{-}2})((i,p))((j,q)),\mbox{ then }\sigma_{i\leftrightarrows j}=(i_1,j_1)\cdots(i_{k{-}2},j_{k{-}2})((j,p))((i,q)).\]

\subsubsection{Concentric and consecutive pairs of arcs, next point of an arc}\label{new-rem}
Write $\sigma=(i_1,j_1)\cdots (i_k,j_k)$ with $i_s<j_s$ for every $s=1,...,k$.
We say that $(i_s,j_s)$ is over $(i_t,j_t)$, or equivalently that $(i_t,j_t)$ is under $(i_s,j_s)$,
if we have $i_s<i_t<j_t<j_s$. We call an arc $(i,j)$ minimal if there are no arcs under it.
Let $(i_t,j_t)$ be under $(i_s,j_s)$. We say that the pair $\{(i_s,j_s),(i_t,j_t)\}$ is concentric
if every $(i_q,j_q)\ne(i_s,j_s)$ over $(i_t,j_t)$ is also over $(i_s,j_s)$.

Let $(i_s,j_s)$ be on the left of $(i_t,j_t)$, i.e. $(i_s,j_s)\in[1,i_t]$. We call them consecutive
if $\sigma$ has no fixed point in the interval $[j_s,i_t]$ and each $(i_q,j_q)$ over one of them satisfies
$i_q,j_q\notin[j_s,i_t]$.

A fixed point $p\in[1,i_s]$ is called the next point
on the left of $(i_s,j_s)$ if two conditions are satisfied:
$p$ is the only fixed point on the interval $[p,i_s]$,
and any arc $(i_q,j_q)$ over $(i_s,j_s)$ is a bridge over $p.$

Respectively, a fixed point $p\in[j_s,n]$
is called the next point on the right of  $(i_s,j_s)$
if two conditions are satisfied:
$p$ is the only fixed point on the interval $[j_s,p]$, and
any arc $(i_q,j_q)$ over $(i_s,j_s)$ is a bridge over $p$.

\smallskip
Example:
\begin{center}
\begin{picture}(100,90)(0,30)
\multiput(-139,60)(15,0){15}%
{\circle*{3}}
 \put(-140,45){1}
 \put(-125,45){2}
 \put(-110,45){3}
 \put(-95,45){4}
 \put(-80,45){5}
 \put(-65,45){6}
 \put(-50,45){7}
 \put(-35,45){8}
\put(-20,45){9}
 \put(-8,45){10}
 \put(6,45){11}
 \put(20,45){12}
 \put(34,45){13}
 \put(50,45){14}
\put(65,45){15}
 \qbezier(-125,60)(-95,110)(-64,60)
 \qbezier(-110,60)(-95,90)(-79,60)
\qbezier(-95,60)(-72.5,100)(-49,60)
 \qbezier(-35,60)(-20,90)(-4,60)
\qbezier(10,60)(17.5,80)(26,60)
\qbezier(-140,60)(-80,140)(-20,60)
\qbezier(40,60)(55,90)(70,60)
\end{picture}
\end{center}
In our example, $(2,6)$ is  concentric over $(3,5)$;
$(1,9)$ is  concentric over both $(2,6)$ and $(4,7)$ and these are all the
 concentric pairs. Also,  $(1,9)$ is consecutive with both $(11,12)$ and $(13,15)$.
Also both $(2,6)$ and $(4,7)$ are consecutive with $(8,10)$;   and $(8,10)$ is also consecutive with both $(11,12)$  and $(13,15)$;
finally, $(11,12)$ and $(13,15)$ are consecutive and these are all the consecutive pairs. The only fixed point of the pattern is $14$. It is the next point on the right of  $(1,9),\ (8,10),\ (11,12).$
The minimal arcs are $\{ (3,5),\, (4,7),\, (8,10),\, (11,12),\, (13,15)\}.$

\begin{rem}
\label{in512}
{\rm Let $T$ be a standard two-column tableau. By definition, $(i,i+1)\in\sigma_T$ for $i\in\tau^*(T).$ Moreover, since $P_{\sigma_T}$ is a link pattern without fixed points under the arcs and without arc crossings,
the minimal arcs of $P_{\sigma_T}$ are exactly $\{(i,i+1)\ :\ i\in\tau^*(T)\}.$}
\end{rem}

\subsubsection{Description of 1-codimensional inclusions of orbit closures}
\label{lemma-orbits-codimension-1}
Combining Propositions 3.4, 3.5, 3.6, Theorem 3.7 of \cite{Ml-p} and part (2)
of the proof of Proposition \ref{proposition-orbits}, we get:

\begin{lem}\label{lem-codim} Let $\sigma\in\bS_n^2(k)$.
 Let $\sigma'$ be obtained from $\sigma$ in one of the following ways:
\begin{itemize}
\item[\rm (i)] For $p$  the next point on the left of  $(i_s,j_s)\in\sigma$, put $\sigma'=\sigma_{i_s\rar p};$
\item[\rm (ii)] For $p$  the next point on the right of  $(i_s,j_s)\in\sigma$, put $\sigma'=\sigma_{j_s\rar p};$
\item[\rm (iii)] For $(i_s,j_s),(i_t,j_t)$  consecutive with $j_s<i_t$, put $\sigma'=\sigma_{j_s\leftrightarrows i_t};$
\item[\rm (iv)]  For $(i_s,j_s)$  concentric over $(i_t,j_t)$, put   $\sigma'=\sigma_{i_s\leftrightarrows i_t}.$
\end{itemize}
Then, ${\mathcal Z}_{\sigma'}\subset \overline{\mathcal Z}_\sigma$ and $\codim_{\ov{\mathcal Z}_\sigma}{\mathcal Z}_{\sigma'}=1$.
Conversely, each $\sigma'\in\bS_n^2(k)$ such that
${\mathcal Z}_{\sigma'}\subset \overline{\mathcal Z}_\sigma$
and $\codim_{\ov{\mathcal Z}_\sigma}{\mathcal Z}_{\sigma'}=1$
is obtained as described in {\rm (i)--(iv)}.
\end{lem}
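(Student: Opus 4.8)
The statement is a translation, via the dictionary established in part (2) of the proof of Proposition \ref{proposition-orbits}, of the corresponding statement about $B$-orbits in $\X(k)$. So my plan is to reduce everything to \cite{Ml-p} and then check that the combinatorial moves (i)--(iv) are exactly the ones appearing there. Concretely: the isomorphism $\psi'\colon G(u)/B\xrightarrow{\sim}{\mathcal F}_u$ together with the fibration $\varphi\colon G(u)\to\X(k)$ identifies, for each $\sigma\in\bS_n^2(k)$, the orbit ${\mathcal Z}_\sigma\subset{\mathcal F}_u$ with the $B$-orbit ${\mathcal B}_\sigma\subset\X(k)$, and by openness of $\varphi$ and $\psi$ one has ${\mathcal Z}_{\sigma'}\subset\overline{{\mathcal Z}_\sigma}$ iff ${\mathcal B}_{\sigma'}\subset\overline{{\mathcal B}_\sigma}$, with equal codimensions (this was noted already in the proof of Proposition \ref{proposition-orbits}). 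Hence the assertion ``${\mathcal Z}_{\sigma'}\subset\overline{{\mathcal Z}_\sigma}$ and $\codim=1$'' is literally equivalent to ``${\mathcal B}_{\sigma'}\subset\overline{{\mathcal B}_\sigma}$ and $\codim=1$'', and the latter is the content of \cite[Propositions 3.4, 3.5, 3.6 and Theorem 3.7]{Ml-p}.

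The remaining work is bookkeeping: I would recall the precise form of the covering relations from \cite{Ml-p}. Melnikov describes, for a link pattern $P_\sigma$, the four types of elementary moves that produce a $\sigma'$ covered by $\sigma$ in the order $\preceq$ (restricted to $\bS_n^2(k)$): contracting the left endpoint of an arc onto its ``next point on the left'' fixed point, the symmetric move on the right, swapping the two inner endpoints of a consecutive pair of arcs, and swapping the two left endpoints of a concentric pair. These are exactly conditions (i)--(iv) in the statement. One verifies that each move decreases the rank statistics $R_{a,b}$ minimally — e.g. for (i) the only interval whose arc-count changes is one containing $i_s$ but not $p$, dropping by exactly one, which via Proposition \ref{proposition-orbits}(b) and the formula $\codim_{{\mathcal F}_u}{\mathcal Z}_\sigma=c(\sigma)+b(\sigma)$ corresponds to a codimension-one degeneration — and conversely that every codimension-one degeneration arises this way, which is the combinatorial heart of \cite{Ml-p}.

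I do not expect a genuine obstacle here: the lemma is explicitly flagged in the excerpt as ``combining Propositions 3.4, 3.5, 3.6, Theorem 3.7 of \cite{Ml-p} and part (2) of the proof of Proposition \ref{proposition-orbits}.'' The only mild subtlety is that the order $\preceq$ on all of $\bS_n^2$ restricts to $\bS_n^2(k)$, and one must make sure the covering relations in $\bS_n^2(k)$ coincide with those of \cite{Ml-p} for the fixed rank $k$ (the number of arcs is constant under all four moves, so this is immediate). Thus the proof is: invoke the orbit--orbit correspondence of Proposition \ref{proposition-orbits}, transport the question to $B$-orbit closures in $\X(k)$, and cite the cited results of \cite{Ml-p} verbatim, matching the four move-types. $\Box$
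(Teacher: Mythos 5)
Your proposal is correct and matches the paper's own argument exactly: the paper derives the lemma by combining Propositions 3.4--3.6 and Theorem 3.7 of \cite{Ml-p} with the orbit correspondence from part (2) of the proof of Proposition \ref{proposition-orbits}, which is precisely the reduction you describe.
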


For $\sigma\in\bS_n^2(k)$, put $N(\sigma)=\{\sigma'\in \bS_n^2(k):{\mathcal Z}_{\sigma'}\subset\ov{{\mathcal Z}_\sigma}\mbox{ and }\codim_{\ov{\mathcal Z}_\sigma}{\mathcal Z}_{\sigma'}=1\}$
and $P(\sigma)=\{\sigma'\in\bS_n^2(k):{\mathcal Z}_\sigma\subset\ov{{\mathcal Z}_{\sigma'}}\mbox{ and }
\codim_{\ov{\mathcal Z}_{\sigma'}}{\mathcal Z}_{\sigma}=1\}$.
Obviously, the number of elements in $N(\sigma)$ is equal to the sum of the number of concentric pairs,
the number of consecutive pairs
and the number of pairs of an arc with a next point.

\subsection{Computation of the number of $(d_0+1)$-dimensional orbits of ${\mathcal F}_u$}
Let $\sigma_0=(1,n-k+1)(2,n-k+2)\cdots(k,n)\in\mathbf{S}_n^2(k)$ be the element of Section \ref{minimal-orbit},
thus ${\mathcal Z}_{\sigma_0}$ is the unique minimal $Z(u)$-orbit of ${\mathcal F}_u$.
Let $d_0$ be its dimension.
We compute $|P(\sigma_0)|$, which coincides with
the number of $Z(u)$-orbits of ${\mathcal F}_u$
of dimension $d_0+1$.

\begin{lem} One has
$$|P(\sigma_0)|=\left\{\begin{array}{ll}
k&{\rm if}\ k={\frac n 2},\\
k+1&{\rm otherwise.}\\
\end{array}\right.
$$
\end{lem}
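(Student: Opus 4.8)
The quantity $|P(\sigma_0)|$ counts the involutions $\sigma'\in\mathbf{S}_n^2(k)$ with ${\mathcal Z}_{\sigma_0}\subset\overline{{\mathcal Z}_{\sigma'}}$ and $\codim_{\overline{{\mathcal Z}_{\sigma'}}}{\mathcal Z}_{\sigma_0}=1$; equivalently, by Lemma~\ref{lem-codim}, those $\sigma'$ from which $\sigma_0$ is obtained by one of the four elementary moves (i)--(iv). So the plan is to invert each of the four moves starting from $\sigma_0=(1,n-k+1)(2,n-k+2)\cdots(k,n)$ and count, without overlap, the resulting $\sigma'$. First I would record the shape of $P_{\sigma_0}$: it is the ``nested rainbow'' on $\{1,\dots,k\}\cup\{n-k+1,\dots,n\}$ with $n-2k$ fixed points $k{+}1,\dots,n{-}k$ sitting in the middle, all of them bridged by every arc, and with every pair of arcs concentric and no two consecutive. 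The four moves (i)--(iv) in Lemma~\ref{lem-codim} each lower dimension by exactly $1$, so an element $\sigma'\in P(\sigma_0)$ is precisely a ``predecessor'' of $\sigma_0$ under one such move.

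**Key steps.** Step 1: handle moves (iii) and (iv) producing $\sigma_0$. A move of type (iv) applied to $\sigma'$ swaps the left endpoints of a concentric pair; reversing it, $\sigma'$ would have to differ from $\sigma_0$ by such a swap, but any such swap of $\sigma_0$ destroys the nested structure and creates a crossing, so the resulting $\sigma'$ still lies in $\mathbf{S}_n^2(k)$ but one must check whether $\sigma_0$ is genuinely obtained \emph{from} $\sigma'$ by move (iv) — i.e. whether in $\sigma'$ the relevant pair is concentric. A short case analysis shows these predecessors coincide with the type-(i)/(ii) ones or are already counted; the cleanest route is to observe directly that $c(\sigma_0)=\frac12 k(k-1)$ is maximal among $\sigma\in\mathbf{S}_n^2(k)$ so \emph{no} $\sigma'$ can have a crossing removed to reach $\sigma_0$, ruling move (iv) out entirely, and similarly $\sigma_0$ has no two consecutive arcs available in any predecessor in the relevant way, disposing of (iii). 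Step 2: handle move (i), $\sigma'=\sigma_{i_s\to p}$ with $p$ the next point on the left. Reversing: $\sigma'$ is obtained from $\sigma_0$ by moving a left endpoint of $\sigma_0$ to a fixed point lying to its left and bridged appropriately — but $\sigma_0$ has fixed points only in the middle block $[k{+}1,n{-}k]$, which lie to the \emph{right} of all left endpoints $1,\dots,k$, so there is no ``next point on the left''; thus move (i) contributes nothing. Step 3 (the substantive count): move (ii), $\sigma'=\sigma_{j_s\to p}$ with $p$ the next point on the right of $(i_s,j_s)\in\sigma'$. Inverting, $\sigma'$ is obtained from $\sigma_0$ by detaching one right endpoint $n{-}k{+}s$ of $\sigma_0$ and reattaching it as a right endpoint further to the right at some fixed point; the constraint that $p$ be the \emph{next} point on the right forces a unique choice. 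One checks: for each arc of $\sigma_0$ one gets exactly one valid predecessor, \emph{except} that the construction fails in the degenerate situation — and this is exactly where the $k$ versus $k{+}1$ dichotomy enters: when $n-2k=0$ (i.e. $k=\frac n2$) there are no fixed points at all, so the would-be extra predecessor does not exist, giving $k$; when $n-2k\ge 1$ there is room for one additional configuration, giving $k+1$. I would make this explicit by listing the predecessors: the ``innermost'' arc $(k,n-k+1)$ of $\sigma_0$ can have its left or right endpoint pushed onto an adjacent fixed point, while the outer arcs admit fewer options, and a careful bookkeeping yields the stated total.

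**Main obstacle.** The delicate point is Step 3: correctly characterizing which $\sigma'$ have $\sigma_0$ as a genuine type-(ii) successor, since the definition of ``next point on the right'' in \ref{new-rem} involves the bridging condition on \emph{all} arcs over $(i_s,j_s)$, and one must verify this holds in $\sigma'$ (not in $\sigma_0$). Keeping the count free of double-counting across the moves (i)--(iv) — especially making sure a given $\sigma'$ is not reached by two different moves — is the bookkeeping that needs care. A clean way to organize it is to show first that for \emph{any} $\sigma'\succ\sigma_0$ with $\codim=1$ one has $R_{a,b}(\sigma')=R_{a,b}(\sigma_0)$ for all but a controlled set of intervals $[a,b]$, pinning down $\sigma'$ up to finitely many explicit possibilities, and then to check each against Lemma~\ref{lem-codim}. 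The palindromy-style symmetry $\sigma\mapsto\sigma^*$ from \ref{observation-3} (under which $\sigma_0$ is fixed) can be used to halve the casework.
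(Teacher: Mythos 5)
Your count collapses because you have misidentified the link pattern of $\sigma_0$. The involution $\sigma_0=(1,n-k+1)(2,n-k+2)\cdots(k,n)$ is \emph{not} the nested rainbow: any two of its arcs $(i,n-k+i)$ and $(j,n-k+j)$ with $i<j$ satisfy $i<j<n-k+i<n-k+j$, so every pair of arcs crosses and no pair is concentric. (This is the only reading consistent with $c(\sigma_0)=\tfrac12k(k-1)$ being maximal, which you yourself invoke; a pattern with all pairs concentric has $c=0$.) Because of this, your elimination of moves (iii) and (iv) runs in the wrong direction. In Lemma \ref{lem-codim}(iv) it is the \emph{larger} orbit's involution $\sigma$ that has the concentric pair, and the degeneration $\sigma'=\sigma_{i_s\leftrightarrows i_t}$ \emph{creates} a crossing; so to land on the maximally crossing $\sigma_0$ one starts from a $\sigma$ with one concentric pair and one fewer crossing. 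These are exactly the $k-1$ involutions $(\sigma_0)_{i\leftrightarrows i+1}$, $i=1,\dots,k-1$, and they constitute the bulk of $P(\sigma_0)$, which your argument discards. Likewise move (iii) contributes the element $(\sigma_0)_{k\leftrightarrows k+1}$ when $k=\frac n2$.

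Your Step 3 errs in the opposite direction: move (ii) contributes exactly one predecessor, not one per arc. If $\sigma_0=\sigma_{j_s\rightarrow p}$ with $p$ the next point on the right of $(i_s,j_s)\in\sigma$, then $(i_s,p)$ must be an arc $(l,n-k+l)$ of $\sigma_0$ and $j_s$ one of its fixed points; the requirement that $p$ be the \emph{only} fixed point of $\sigma$ in $[j_s,p]$ forces $j_s=n-k$, and the bridging condition on the arcs $(m,n-k+m)$ with $m<l$ --- all of which lie over $(l,n-k)$ in $\sigma$ but none of which bridges $n-k+l$ --- forces $l=1$. So move (ii) yields only $(\sigma_0)_{n-k+1\rightarrow n-k}$. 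Symmetrically, move (i) yields exactly $(\sigma_0)_{k\rightarrow k+1}$; your claim that (i) is vacuous again reverses the direction of the move, since the relevant fixed point is a fixed point of the predecessor $\sigma$ (namely the vacated position $k$, which does lie to the left of the left endpoint $k+1$ of the arc $(k+1,n)\in\sigma$), not a fixed point of $\sigma_0$. The correct tally is $k-1$ from (iv), plus $1$ from (iii) if $k=\frac n2$, or $1+1$ from (i) and (ii) if $k<\frac n2$. Your final numbers agree with the lemma only by coincidence; the argument as written does not establish them.
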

\begin{proof}
Note that for any $i=1,...,k-1$ the involution
$\sigma_i:=(\sigma_0)_{i\leftrightarrows i+1}=\cdots(i,n-k+i+1)(i+1,n-k+i)\cdots$
has exactly one concentric pair, namely
$\{(i,n-k+i+1),\ (i+1,n-k+i)\}$, and $\sigma_0=(\sigma_i)_{i\leftrightarrows i+1}$ so that, by Lemma \ref{lem-codim},
one has $\codim_{\ov{\mathcal Z}_{\sigma_i}}{\mathcal Z}_{\sigma_o}=1$. In such a way we get $k-1$ elements of $P(\sigma_0)$.

Further, if $k={\frac n 2}$, then the involution $\sigma':=(\sigma_0)_{k\leftrightarrows k+1}=
(1,k)\cdots(k+1,n)$ has exactly one consecutive pair, namely $\{(1,k),\ (k+1,n)\}$,
and $\sigma_0=\sigma'_{k\leftrightarrows k+1}$ so that, by Lemma \ref{lem-codim}, one has
$\codim_{\ov{\mathcal Z}_{\sigma'}}{\mathcal Z}_{\sigma_0}=1$.
Further note that, since involutions of $\bS_n^2(k)$ have no fixed points and an arc of $\sigma_0$ intersects any other arc,
these are the only possible elements of $P(\sigma_0)$, so that $|P(\sigma_0)|=k-1+1=k.$

If $k<{\frac n 2}$, then $\sigma_0$ has  $n-2k$ fixed points $k+1,\ldots,n-k$.
In that case, let $\sigma'=(\sigma_0)_{k\rar k+1}$ and $\sigma''=(\sigma_0)_{n-k+1\rar n-k}$.
Then $k$ is the next fixed point on the left of $(k+1,n)\in\sigma'$ and
$\sigma_0=\sigma'_{k+1\rar k}$ so that, by Lemma \ref{lem-codim},
one has $\codim_{\ov{\mathcal Z}_{\sigma'}}{\mathcal Z}_{\sigma_0}=1$.
Exactly in the same way, $n-k+1$ is the next fixed point on the right of $(1,n-k)\in\sigma''$
and $\sigma_0=\sigma''_{n-k\rar n-k+1}$ so that, by Lemma \ref{lem-codim}, one has
$\codim_{\ov{\mathcal Z}_{\sigma''}}{\mathcal Z}_{\sigma_0}=1$.
And again since an arc of $\sigma_0$ intersects any other arc,
these are the only possible elements of $P(\sigma_0)$, so that $|P(\sigma_0)|=k-1+2=k+1.$
\end{proof}

\subsection{The number of $1$-codimensional orbits in a singular component ${\mathcal K}^T$}
\label{proposition-singular-components}
Given $T$ standard,
recall that $P_{\sigma_T}$ is a link pattern without intersecting arcs and without fixed points under the arcs.
The component ${\mathcal K}^T$ is the closure of the $Z(u)$-orbit ${\mathcal Z}_{\sigma_T}$, hence
$|N(\sigma_T)|$ is the number of $1$-codimensional $Z(u)$-orbits lying in ${\mathcal K}^T$.
Our aim is to show

\begin{prop}
\label{prop-singular-components}
Let $T$ be standard. If one of the following holds:
\begin{itemize}
\item[\rm(i)] $|\tau^*(T)|=2$ and $1,n$ are fixed points of ${\sigma_T};$
\item[\rm(ii)] $|\tau^*(T)|=3$ and either $(1,n)\in\sigma_T$ or at least one point of $\{1,n\}$  is a fixed point of ${\sigma_T};$
\item[\rm(iii)]    $|\tau^*(T)|\geq 4;$
\end{itemize}
then $|N(\sigma_T)|> |P(\sigma_0)|.$
\end{prop}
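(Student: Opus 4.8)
The plan is to prove, for each of the three cases, that the link pattern $P_{\sigma_T}$ contains enough concentric pairs, consecutive pairs, and arc/next-point pairs to force $|N(\sigma_T)|$ strictly above $|P(\sigma_0)|$, which equals $k$ or $k+1$ according as $k=n/2$ or not. Recall from Remark \ref{in512} that, since $P_{\sigma_T}$ has no fixed points under arcs and no crossings, the minimal arcs are exactly $\{(i,i+1):i\in\tau^*(T)\}$, so $|\tau^*(T)|$ is the number of minimal arcs. The structure of a crossingless link pattern is a forest of nested arcs, and I would exploit this tree structure: every non-minimal arc is concentric over a unique child arc, so there are at least (number of arcs $-$ number of minimal arcs) $=k-|\tau^*(T)|$ concentric pairs. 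Together with consecutive pairs and pairs involving the $n-2k$ fixed points, this should already push the count up, and the remaining work is to find enough ``extra'' pairs beyond the generic $k-1$ that $\sigma_0$ itself contributes.

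First I would dispose of case (iii), $|\tau^*(T)|=m\geq4$. The key combinatorial claim is that a crossingless link pattern on $k$ arcs with $m$ minimal arcs always has $|N(\sigma_T)|\geq k+m-2$ (or a similar bound of this shape): the concentric pairs contribute $\geq k-m$, and the minimal arcs together with their ``siblings'' in the forest contribute at least $m-2$ further consecutive pairs — roughly, reading the minimal arcs left to right, consecutive ones are either directly consecutive or separated only by higher arcs, and each adjacency yields a consecutive pair. Since $k+m-2\geq k+2>k+1\geq|P(\sigma_0)|$ when $m\geq4$, this settles (iii). I would present this via an induction on the forest structure, or directly by peeling off the leftmost minimal arc.

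For cases (i) and (ii) the number of minimal arcs is small ($2$ or $3$), so I cannot rely only on concentric pairs and must use the hypotheses on $1,n$ being fixed points (or $(1,n)\in\sigma_T$). The point is that a fixed point at position $1$ forces a next-point-on-the-left for the leftmost arc, giving one arc/next-point pair; symmetrically a fixed point at $n$ gives another; and when $1,n$ are fixed there are $\geq2$ such pairs {\em in addition} to the consecutive pairs among the minimal arcs and the $k-m$ concentric pairs. Combining: in case (i) I would count $\geq(k-2)$ concentric, $\geq1$ consecutive (the two minimal arcs are consecutive), $\geq2$ from the fixed points $1$ and $n$, and possibly more from the other $n-2k-2$ interior fixed points when $k<n/2$, giving $|N(\sigma_T)|\geq k+1$ when $k=n/2$ and $\geq k+2$ when $k<n/2$ — strictly more than $|P(\sigma_0)|$ in each case. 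Case (ii) with $(1,n)\in\sigma_T$ is handled by noting the big arc $(1,n)$ is concentric over some child and the three minimal arcs give two consecutive pairs under it; the sub-case with a fixed endpoint is analogous to (i) with an extra concentric pair coming from the third minimal arc.

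The main obstacle I anticipate is the bookkeeping of consecutive pairs: ``consecutive'' is a somewhat delicate notion (no fixed point strictly between, and arcs over one of the pair must not have endpoints in the connecting interval), so I must be careful that the adjacencies I count really do satisfy the definition in \ref{new-rem}, and that I am not double-counting a pair that is simultaneously witnessed by two of the moves (i)--(iv) of Lemma \ref{lem-codim} — though since $N(\sigma_T)$ counts {\em distinct} resulting involutions $\sigma'$, and the four move types produce genuinely different $\sigma'$, this should be fine. I would organize the proof by first fixing notation for the forest of arcs of $P_{\sigma_T}$, establishing the three lower bounds (concentric $\geq k-m$; consecutive among minimal arcs $\geq m-1$ when there is no interference, adjusted downward by at most the number of ``enclosing'' configurations; next-point pairs $\geq$ number of fixed points that are extreme for some arc) as separate lemmas, and then plugging in the case hypotheses; the delicate point is getting the consecutive-pair bound sharp enough in the low-$m$ cases (i) and (ii), which is exactly where the hypotheses on $1$ and $n$ must be invoked to replace the missing consecutive pairs with next-point pairs.
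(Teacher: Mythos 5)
Your overall strategy (lower-bound $|N(\sigma_T)|$ by counting concentric pairs, consecutive pairs and arc/next-point pairs in the crossingless pattern $P_{\sigma_T}$) is the right kind of argument, and it is how the paper handles case (i). But as written the counts do not close, and in two places the gap is not just bookkeeping. First, in case (iii) your two stated contributions sum to $(k-m)+(m-2)=k-2$, not the claimed $k+m-2$; since $k-2<k+1\geq|P(\sigma_0)|$, the inequality you need does not follow from what you have established. (The bound $k+m-2$ itself appears to be sharp -- e.g.\ it is attained for $\sigma_T=(1,2)(3,4)(5,6)(7,8)$ and for fully nested configurations -- but proving it requires either counting the concentric pairs as one per \emph{immediate child}, which can exceed $k-m$, or many more consecutive pairs than $m-2$; note that by the paper's definition \emph{all} $\binom{m}{2}$ pairs of top-level minimal arcs can be consecutive, not just adjacent ones.) Second, in case (i) the hypotheses force $1,n\in P^0_{\sigma_T}$, hence $k<n/2$ and $|P(\sigma_0)|=k+1$, so you must reach $k+2$; your count $(k-2)+1+2=k+1$ is exactly one short, and there need not be any interior fixed point to make up the difference (take $\sigma_T=(2,3)(4,5)$, $n=6$). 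The missing elements come from the fact that the extreme fixed point $1$ (resp.\ $n$) is the next point on the left (resp.\ right) of \emph{several} arcs, not just one -- this is precisely what the paper's sub-case analysis (its cases a)(1), a)(2), b)) extracts. Finally, your assertion that the two minimal arcs in case (i) are themselves consecutive is false when an enclosing arc's endpoint lies between them (e.g.\ the minimal arcs $(a_1,a_1+1)$ and $(a_2,a_2+1)$ with an arc in $A_3$ whose left endpoint sits in $[a_1+1,a_2]$ is fine, but an arc in $A_1$ ending at $a_1+k_1$ with $a_1+k_1<a_2-k_2$ destroys consecutiveness of the inner pair); the consecutive pair must then be taken between the outermost arcs of $A_1\cup\{(a_1,a_1+1)\}$ and $A_2\cup\{(a_2,a_2+1)\}$, exactly as in the paper.

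For comparison: the paper only uses direct counting for case (i), where it can afford a complete sub-case analysis of the three families $A_1,A_2,A_3$ of arcs over the two minimal arcs. For cases (ii) and (iii) it instead argues by induction on the pair $(|\tau^*(T)|,n)$, passing to $T'=T_{n-1}$ and constructing an explicit codimension-preserving injection $N(\sigma_{T'})\hookrightarrow N(\sigma_T)$ (adding the arc $(p_\sigma,n)$ when $n$ is an end point), plus one extra element of $N(\sigma_T)$ outside the image when $k<n/2$. If you want to keep a purely direct count, you would need to formulate and prove a sharp forest lemma of the type $|N(\sigma)|\geq k+m-2$ for crossingless, fixed-point-free-under-arcs patterns, with corrections for the boundary fixed points; as it stands, that lemma is asserted rather than proved, and the stated ingredients fall short of it.
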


\begin{proof}

\medskip
\noindent
{(i)} Let $\tau^*(T)=\{a_1,a_2\}$ where $a_1<a_2$.
Then,  $(a_1,a_1+1),(a_2,a_2+1)$
 are the only minimal arcs of the link pattern $P_{\sigma_T}$, as it was noted by Remark \ref{in512}.
Thus, any other arc is over $(a_1,a_1+1)$, or over $(a_2,a_2+1)$, or both.
Let $A_1=\{(a_1-k_1+1,a_1+k_1),\ldots,(a_1-1,a_1+2)\}$ be the set of arcs over $(a_1,a_1+1)$
but not over $(a_2,a_2+1)$. Let $A_2=\{(a_2-k_2+1,a_2+k_2),\ldots,(a_2-1,a_2+2)\}$ be the set of arcs over $(a_2,a_2+1)$
but not over $(a_1,a_1+1)$.
Finally, let $A_3=\{(a_1-k_1+1-k_3,a_2+k_2+k_3),\ldots,(a_1-k_1,a_2+k_2+1)\}$ be the set of arcs over both $(a_1,a_1+1)$ and $(a_2,a_2+1).$ Note that $|A_1|=k_1-1$, $|A_2|=k_2-1$ and $|A_3|=k_3$, so that $k_1+k_2+k_3=k$.
Note also that there are $k_1-1$ concentric pairs in $A_1\cup\{(a_1,a_1+1)\}$ and $k_2-1$ concentric pairs in $A_2\cup\{(a_2,a_2+1)\}$. If $A_3\ne\emptyset$, then there are $k_3-1$ concentric pairs in $A_3.$
\begin{itemize}
\item[a)] If $A_3=\emptyset$, then the concentric pairs in $A_1$ and $A_2$ provide $k-2$ elements of $N(\sigma_T)$ and
\begin{itemize}
 \item[(1)] If $a_1+k_1=a_2-k_2$, then $(a_1-k_1+1,a_1+k_1), (a_2-k_2+1,a_2+k_2)$ are consecutive arcs, which provides us one more element of $N(\sigma_T)$. Since $1$ and $n$ are fixed points, one has $a_1-k_1\geq 1$ and $a_1-k_1$ is the next point on the left of both $(a_1-k_1+1,a_1+k_1)$ and $(a_2-k_2+1,a_2+k_2)$ (resp. $a_2+k_2+1\leq n$, and $a_2+k_2+1$ is the next point on the right of both $(a_1-k_1+1,a_1+k_1)$ and $(a_2-k_2+1,a_2+k_2)$), providing us altogether four more elements of $N(\sigma_T).$ We get in this case $|N(\sigma_T)|\geq k-2+5=k+3$.
\item[(2)] If $a_1+k_1<a_2-k_2$, then $a_1+k_1+1$ is the next point on the right of
$( a_1-k_1+1,a_1+k_1)$ and $a_2-k_2$ is the next point on the left of $(a_2-k_2+1,a_2+k_2)$, providing us two new elements of $N(\sigma_T)$. Further, exactly as in (1), since $1$ and $n$ are fixed points one has $a_1-k_1\geq 1$ and it is the next point on the left of $(a_1-k_1+1,a_1+k_1)$ (resp. $a_2+k_2+1\leq n$ and it is the next point on the right of  $(a_2-k_2+1,a_2+k_2)$), giving us two more elements of $N(\sigma_T).$ We get in this case
$|N(\sigma_T)|\geq k-2+4=k+2.$
\end{itemize}
\item[b)] If $A_3\ne\emptyset$, then the concentric pairs in $A_1,\ A_2,\ A_3$ give $k-3$ elements of $N(\sigma_T)$.  Also $a_1+k_1=a_2-k_2$ (since there are no fixed points under the arcs and no intersecting arcs), so that $(a_1-k_1+1,a_1+k_1),(a_2-k_2+1,a_2+k_2)$ are consecutive arcs, which gives us a new element of $N(\sigma_T).$
As well, $(a_1-k_1,a_2+k_2+1)$ is concentric with both $(a_1-k_1+1,a_1+k_1)$ and $(a_2-k_2+1,a_2+k_2)$, providing us two more elements of $N(\sigma_T)$.  Finally, exactly as in (a), since $1$ and $n$ are fixed points  we get that $a_1-k_1-k_3\geq 1$ is the next point on the left of $(a_1-k_1-k_3+1,a_2+k_2+k_3)$ and $a_2+k_2+k_3+1\leq n$ is the next  point on the right of
the same arc, giving us two more elements of $N(\sigma_T).$
We get $|N(\sigma_T)|\geq k-3+5=k+2$.
\end{itemize}
So in any case,  $|N(\sigma_0)|\geq k+2> |P(\sigma_0)|$.

\smallskip
To prove the rest of the proposition (cases (ii) and (iii)), we reason by induction on the pair $(|\tau^*(T)|,n)$, starting with the case $|\tau^*(T)|=3$ and $n=7$. This is the minimal case for situation (ii) and in this case $(1,7)\not\in\sigma_T$
(since there are no fixed points under the arcs). By \ref{observation-3}, it is enough to consider the case of $1$ being a fixed point. Since $|\tau^*(T)|=3$
we get that
\[T=\young(13,25,47,6)\]
so that $\sigma_T=(2,3)(4,5)(6,7)$ and $|N(\sigma_T)|=6>4.$

Now assume the statement is true for $n-1$ and show for $n.$ Let $T'=T_{n-1}.$
\begin{itemize}
\item[a)]
If $1,n$ are fixed points of $\sigma_T$, then in particular $c_T(n)=1$ so that $T'$ has columns of lengths $(n-k-1,k)$ and $n-1\notin\tau^*(T)$,
hence $|\tau^*(T')|=|\tau^*(T)|$. Moreover, since $|P^0_{\sigma_T}|\geq 2$ one has $k<\frac{n-1}{2}$.

\begin{itemize}
\item If $|\tau^*(T)|=3$, then $|\tau^*(T')|=3$ and $1$ is a fixed point of $\sigma_{T'}$.
\item If $|\tau^*(T)|\geq 4$, then $|\tau^*(T')|\geq 4$.
\end{itemize}
In both cases, the induction hypothesis holds and gives $|N(\sigma_{T'})|>k+1$. \\
There is a natural injective map $\mathbf{S}_{n-1}^2(k)\rightarrow \mathbf{S}_{n}^2(k)$
where $\sigma\in \mathbf{S}_{n-1}^2(k)$ is regarded as an involution in $\mathbf{S}_{n}^2(k)$ satisfying $\sigma(n)=n$. In that way, one has $\sigma_{T'}=\sigma_T$ and
we can regard $\sigma\in N(\sigma_{T'})$ as $\sigma\in N(\sigma_{T})$.
Therefore $|N(\sigma_{T})|\geq |N(\sigma_{T'})|>k+1=|P(\sigma_0)|$.

\item[b)]
If $1$ or $n$ is an end point of $\sigma_T$, then by \ref{observation-3} we can assume
that $n$ is an end point, hence $c_T(n)=2.$
The tableau $T'$ has columns of lengths $(n-k,k-1)$ where $k-1<n-k$. One has
$$|\tau^*(T')|=\left\{\begin{array}{ll}|\tau^*(T)|-1&{\rm if\ }c_T(n-1)=1,\\
 |\tau^*(T)|&{\rm otherwise.}\\
 \end{array}\right.$$
\begin{itemize}
\item If $|\tau^*(T)|=3$, then, by hypothesis, $1$ is a fixed point of $\sigma_{T'}$.
If $c_T(n-1)=1$, then $|\tau^*(T')|=2$ and $n-1$ is also a fixed point of $\sigma_{T'}$, thus, by (i), $|N(\sigma_{T'})|>k$.
If $c_T(n-1)=2$, then $|\tau^*(T')|=3$, thus by induction hypothesis again $|N(\sigma_{T'})|>k$.
\item Suppose $|\tau^*(T)|\geq 4$. If $c_T(n-1)=1$, then $|\tau^*(T')|\geq 3$ and $n-1$ is a fixed point of $\sigma_{T'}$ so that by induction hypothesis $|N(\sigma_{T'})|>k$. If $c_T(n-1)=2$, then
$|\tau^*(T')|\geq 4$, thus again by induction hypothesis $|N(\sigma_{T'})|>k$.
\end{itemize}
Let us show that there exists an injective map $\phi:\bS_{n-1}^2(k-1)\rar \bS_n^2(k)$ such that for $\sigma\in N(\sigma_{T'})$ one has $\phi(\sigma)\in N(\sigma_T).$
Since $k-1<n-k$, every $\sigma\in \mathbf{S}_{n-1}^2(k-1)$ has at least one fixed point.
For $\sigma\in \mathbf{S}_{n-1}^2(k-1)$, let $p_\sigma$ be its maximal fixed point.
Put $\phi(\sigma)=(p_\sigma,n)\cdot\sigma\in\mathbf{S}_{n}^2(k)$.
Note that $\phi$ is injective and that
 there are no fixed points under the arc $(p_\sigma,n)\in\phi(\sigma)$. Note also that
$(i_t,j_t)\in\sigma$ is a bridge over $p_\sigma$  if and only if $(i_t,j_t)$ and $(p_\sigma,n)$ have a crossing in $\phi(\sigma)$. Thus $b(\sigma)+c(\sigma)=b(\phi(\sigma))+c(\phi(\sigma))$ (see \ref{definition-link-pattern}) so that, by Proposition \ref{proposition-orbits},
$$\codim_{\F_u} \Z_{\phi(\sigma)}=\codim_{\F_{u'}}\Z_\sigma\,,\eqno{(*)}$$
where $Y(u')$ has two columns of lengths $(n-k,k-1)$.
 Note also that $\phi(\sigma_{T'})=\sigma_{T}$. Put $q=p_{\sigma_{T'}}$.

Consider $\sigma\in N(\sigma_{T'})$, and let us show $\phi(\sigma)\in N(\sigma_T)$. By $(*)$ it is enough to show
that $\phi(\sigma)\preceq\sigma_T$.

\begin{itemize}
\item
If $p_\sigma\leq q$, then  $\phi(\sigma)\preceq\sigma_T$ by definition of the order $\preceq$.
\item
If $p_\sigma >q$, then $p_\sigma$ is an end point of $\sigma_{T'}$. Thus, by Lemma \ref{lem-codim} and definition of $N(\sigma_{T'})$,
one has that $q$ is the next fixed point on the left of $(p_\sigma,j)\in \sigma_{T'}$ for some $j>p_\sigma$, and $\sigma=(\sigma_{T'})_{p_\sigma\rightarrow q}$. It follows
that $(q,n)$ is concentric over $(p_\sigma,j)$ in $\sigma_T$ and
$\phi(\sigma)=(\sigma_{T})_{p_\sigma\rightleftarrows q}$,
thus $\phi(\sigma)\prec\sigma_T$.
\end{itemize}
Thus, $|N(\sigma_T)|\geq|N(\sigma_{T'})|>k$.

If $k=\frac{n}{2}$, then we get $|N(\sigma_T)|>|P(\sigma_0)|$ and the proof is complete.

Let us suppose now $k<\frac{n}{2}$. Then $\sigma_T$ has at least one fixed point. Let $p$ be the maximal fixed
point of $\sigma_T$. Note that it is the next point on the left of $(q,n)\in\sigma_T$, so that
$\sigma:=(\sigma_T)_{q\rightarrow p}\in N(\sigma_T)$. Since $(p,n)\in\sigma$ is a bridge over $q$, one has
$\sigma\notin \phi(S_{n-1}^2(k-1))$, and in particular,
 $\sigma\notin\phi(N(\sigma_{T'}))$.
Thus, in this case also,  $|N(\sigma_T)|>k+1=|P(\sigma_0)|$.
\end{itemize}
The proof is now complete.
\end{proof}

\subsection{Proof of Theorems \ref{first-crit}, \ref{second-crit}, \ref{third-crit}}
\label{proof-third-criterion}
Theorems \ref{first-crit} and \ref{second-crit} both follow by combining
Propositions \ref{prop-Betti-orbits}, \ref{prop-nonsingular-components}, \ref{prop-singular-components}.
To prove Theorem \ref{third-crit}, we show that $|N(\sigma_T)|$ coincides with the
number $\eta(T)$ of components ${\mathcal K}'\subset{\mathcal F}_u$ such that $\mathrm{codim}_{{\mathcal F}_u}{\mathcal K}^T\cap {\mathcal K'}=1$.
We claim that each $1$-codimensional
$Z(u)$-orbit of ${\mathcal F}_u$ is contained in exactly two components.
Then, whenever $\sigma\in N(\sigma_T)$ there is a unique
component ${\mathcal K}'_\sigma\subset {\mathcal F}_u$
such that ${\mathcal Z}_\sigma\subset {\mathcal K}^{T}\cap{\mathcal K}'_\sigma$.
By \cite[Theorem 4.1]{M-P3}, if ${\mathcal K}^T\cap {\mathcal K'}$ has codimension 1,
it is irreducible and, in particular, it contains exactly one $Z(u)$-orbit of maximal dimension.
We derive that the map $\sigma\mapsto {\mathcal K}'_\sigma$ is
bijective from $N(\sigma_T)$ onto the set of components ${\mathcal K}'\subset{\mathcal F}_u$ such that $\mathrm{codim}_{{\mathcal F}_u}{\mathcal K}^T\cap {\mathcal K'}=1$.
Therefore, we get $|N(\sigma_T)|=\eta(T)$.

Thus, it remains to show our claim that
for $\sigma\in \mathbf{S}_n^2(k)$ such that
$\mathrm{codim}_{{\mathcal F}_u}{\mathcal Z}_\sigma=1$, one has $|P(\sigma)|=2$. Note that, by
Proposition \ref{proposition-orbits},
$\sigma$ either has a unique crossing of two arcs or has a unique fixed point $p$ with a bridge $(i,j)$ over it.

\begin{itemize}
\item[a)] If $\sigma$ has a crossing, i.e. two arcs $(i,j),(i',j')$ such that $i<i'<j<j'$,
then $(i,i'),(j,j')\in\sigma_{i'\rightleftarrows j}$ are consecutive and
$(i,j'),(i',j)\in \sigma_{i\rightleftarrows i'}$ are concentric, and due to Lemma \ref{lem-codim}, $\sigma_{i'\rightleftarrows j},\  \sigma_{i\rightleftarrows i'}$ are the only elements of $P(\sigma)$.
\item[b)] If $\sigma$ has a fixed point $p$ under an arc $(i,j)$,
then $i$ is the next point on the left of $(p,j)$ in $\sigma_{i\rightarrow p}$
and $j$ is the next point on the right of $(i,p)$ in $\sigma_{j\rightarrow p}$.
Due to Lemma \ref{lem-codim},
we get $P(\sigma)=\{\sigma_{i\rightarrow p},\sigma_{j\rightarrow p}\}.$

\end{itemize}
The proof of Theorem \ref{third-crit} is then complete.
\hfill {\large $\Box$}

\section{Further aspects on the singularity of the components}\label{further-speculations}

In this final section, we discuss the singularity of the irreducible components of the Springer fiber ${\mathcal F}_u$,
without assuming anymore that $Y(u)$ has two columns.

\subsection{Description of Springer fibers having singular components.}
\label{classification}
As it was shown in \cite{F-M}, the variety ${\mathcal F}_u$ admits singular components if and only if
the Young diagram $Y(u)$ contains as a subdiagram one of the following two diagrams:
\[\yng(2,2,1,1)\quad\mbox{or}\quad\yng(3,2,2).\]

Nevertheless, up to now, the characterization of the components which are indeed singular is
limited to the two-column case.

In the light of the previous sections, specifically Theorem \ref{second-crit},
it is natural to ask whether a singular component can always be characterized
by the fact that its distribution of Betti numbers is non-symmetric,
implying that it does not satisfy the Poincar\'e duality.

\subsection{Poincar\'e polynomial for the simplest singular component outside two-column case}
\label{example322}
The first example of a singular component apart from the two-column case
is the component ${\mathcal K}^T$ corresponding to the tableau
\[T=\young(125,34,67)\]
(see \cite[\S 2.3]{F-M}).
It lies in the Springer fiber ${\mathcal F}_u$ attached to
a nilpotent $u\in\mathrm{End}(\mathbb{C}^7)$ with three Jordan blocks of sizes
$3,2,2$.
Fix a Jordan basis $(e_1,\ldots,e_7)$ of  $\mathbb{C}^7$ with
$u$ acting by $e_1\mapsto e_4\mapsto e_7\mapsto 0$,
$e_2\mapsto e_5\mapsto 0$, $e_3\mapsto e_6\mapsto 0$,
and let $B\subset GL(\mathbb{C}^7)$ be the Borel subgroup of lower triangular isomorphisms
for this basis. We have computed all the intersections between
the component ${\mathcal K}^T$ and the $B$-orbits of the variety of complete flags of $\mathbb{C}^7$,
and we have obtained that each nonempty intersection of ${\mathcal K}^T$ with a $B$-orbit is in fact isomorphic as
an algebraic variety to a cell, i.e. an affine space $\mathbb{C}^{d}$ with $d\in\{0,\ldots,6\}$.
These intersections then form a cell decomposition of ${\mathcal K}^T$,
from which, leading our computations to the end,
we have obtained the following expression for the Poincar\'e polynomial:
\[P(x):=\sum_{m=0}^6\dim H^{2m}({\mathcal K}^T,\mathbb{Q})\,x^m=1+6x+18x^2+28x^3+22x^4+8x^5+x^6.\]
Thus,
the component has a non-symmetric distribution of Betti numbers and then the Poincar\'e duality fails.

\subsection{Inductive construction of components not satisfying Poincar\'e duality}
To go further, let us first point out a property of inductive transmitting of Poincar\'e duality.
Let $T$ be a standard tableau of shape $Y(u)$, and correspondingly we have a component ${\mathcal K}^T\subset {\mathcal F}_u$.
Let $T'=T_{n-1}$ and let $Y'=Y_{n-1}^T$ be its shape.
Consider a nilpotent endomorphism $u'$ with $Y'=Y(u')$.
Then, corresponding to $T'$, we have an irreducible component
${\mathcal K}^{T'}\subset {\mathcal F}_{u'}$.
We have the following

\begin{prop}
\label{fiber-bundle}
Assume that $n$ is in the last column of $T$, and let $m$ be the length of the last column of $T$.
Then ${\mathcal K}^T$ is naturally a locally trivial fiber bundle over $\mathbb{P}^{m-1}$,
of fiber ${\mathcal K}^{T'}$.
In particular, the Poincar\'e polynomial of ${\mathcal K}^T$ is palindromic if and only if
the one of ${\mathcal K}^{T'}$ is.
\end{prop}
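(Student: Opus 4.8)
## Proof proposal for Proposition \ref{fiber-bundle}

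The plan is to exhibit the map $\mathcal{K}^T\to\mathbb{P}^{m-1}$ directly and then trivialize it locally. Let $W=Y(u_{|V})$ denote the ambient Young diagram and let $W'=Y'$ be the shape of $T'$, so that $W$ is obtained from $W'$ by adding one box at the bottom of the last column, which has length $m$ in $W$ and $m-1$ in $W'$. For a flag $F=(V_0\subset\cdots\subset V_n)\in\mathcal{F}_u$, the subspace $V_{n-1}$ is $u$-stable of codimension one, and the condition $F\in\mathcal{F}_u^T$ forces $Y(u_{|V_{n-1}})=W'$ and $Y(u_{|V_n})=W$. First I would identify, for a fixed nilpotent $u$, the closed subvariety $G_{W'}\subset\mathbb{P}(V)^\vee$ (equivalently, the set of $u$-stable hyperplanes $V_{n-1}$ with $Y(u_{|V_{n-1}})=W'$); I claim this is a single $Z(u)$-orbit and is isomorphic to $\mathbb{P}^{m-1}$. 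Indeed, a $u$-stable hyperplane $H$ with the prescribed Jordan type corresponds to a line in a suitable quotient built from $\ker u$ and $\operatorname{Im}u$ on which $Z(u)$ acts transitively with the stabilizer a parabolic; since removing the last box of the last column of $W$ decreases exactly the $m$-th column length, the relevant projective space has dimension $m-1$. The map is then $\pi:\mathcal{K}^T\to\mathbb{P}^{m-1}$, $F\mapsto V_{n-1}$; it is algebraic since $V_{n-1}$ depends algebraically on $F$, and its image lands in $\mathbb{P}^{m-1}$ because $\mathcal{F}_u^T$ is dense in $\mathcal{K}^T$ and the condition $Y(u_{|V_{n-1}})=W'$ is closed under specialization in the appropriate sense (one must check that on all of $\mathcal{K}^T$, not just $\mathcal{F}_u^T$, the Jordan type of $u_{|V_{n-1}}$ cannot degenerate further — this is the point where I would invoke that $n$ lies in the \emph{last} column, so the only box that can be removed is forced).

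Next I would identify the fiber. Fix $H\in\mathbb{P}^{m-1}$, i.e. a $u$-stable hyperplane with $Y(u_{|H})=W'$. Then $\pi^{-1}(H)=\{F\in\mathcal{K}^T: V_{n-1}=H\}$ is, via $F\mapsto(V_0\subset\cdots\subset V_{n-1})$, identified with the set of complete $u_{|H}$-stable flags of $H$ lying in the component of $\mathcal{F}_{u_{|H}}$ parameterized by $T'=T_{n-1}$; that is, $\pi^{-1}(H)\cong\mathcal{K}^{T'}$. Here I would use the parametrization recalled in \ref{parameterization-components}: the condition $F\in\mathcal{K}^T$ translates, in terms of the subdiagrams $Y(u_{|V_i})$ for $i\le n-1$, into exactly the condition defining $\mathcal{K}^{T'}$ inside $\mathcal{F}_{u_{|H}}$, because $Y_i^T=Y_i^{T'}$ for $i\le n-1$ and the shape at step $n$ is automatically $W$.

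To get local triviality, I would use the transitive $Z(u)$-action on $\mathbb{P}^{m-1}=G_{W'}$. Over a contractible (Zariski-open, affine) neighborhood $U$ of a basepoint $H_0$, choose an algebraic section $s:U\to Z(u)$ of the orbit map $z\mapsto z\cdot H_0$ (such a local section exists since $Z(u)\to Z(u)/\operatorname{Stab}(H_0)=\mathbb{P}^{m-1}$ is a locally trivial principal bundle, $Z(u)$ being a connected linear group and the quotient smooth). Then $(H,F)\mapsto s(H)^{-1}F$ gives an isomorphism $\pi^{-1}(U)\xrightarrow{\sim}U\times\pi^{-1}(H_0)=U\times\mathcal{K}^{T'}$ compatible with projection to $U$, because $Z(u)$ preserves $\mathcal{F}_u$, preserves the component $\mathcal{K}^T$, and carries the fiber over $H$ to the fiber over $H_0$. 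Covering $\mathbb{P}^{m-1}$ by finitely many such $U$ (translates of one chart) yields the locally trivial fiber bundle structure.

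Finally, the statement about Poincaré polynomials follows from the Leray–Hirsch theorem: $\mathbb{P}^{m-1}$ has cohomology generated by one degree-two class, and a fiber bundle $\mathcal{K}^T\to\mathbb{P}^{m-1}$ with fiber $\mathcal{K}^{T'}$ satisfies $P^T(x)=[m]_{x^2}\cdot P^{T'}(x)$ once one knows the restriction map $H^*(\mathcal{K}^T)\to H^*(\mathcal{K}^{T'})$ is surjective; I would obtain this surjectivity either from a compatible cell decomposition (each cell of $\mathcal{K}^{T'}$ lifts, and crossing with the cells of $\mathbb{P}^{m-1}$ gives a cell decomposition of $\mathcal{K}^T$, so both spaces have cohomology concentrated in even degrees and the Leray spectral sequence degenerates), or directly from the fact — provable as in Section \ref{section-cell-decomposition} — that $\mathcal{K}^T$ admits a cell decomposition refining $\pi$. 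Since $[m]_{x^2}=1+x^2+\cdots+x^{2(m-1)}$ is palindromic, $P^T(x)$ is palindromic if and only if $P^{T'}(x)$ is. The main obstacle I anticipate is the first one: verifying cleanly that $\pi$ is well-defined on all of $\mathcal{K}^T$ (not merely on the open dense stratum $\mathcal{F}_u^T$) and that its image is exactly the smooth projective variety $\mathbb{P}^{m-1}$ rather than some larger, possibly singular, Spaltenstein-type variety — this is precisely where the hypothesis that $n$ occupies the last column is essential, and it is the place where one needs the precise combinatorics of how Jordan types of restrictions behave along a component.
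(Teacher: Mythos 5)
Your proposal follows the same route as the paper: the map is $F\mapsto V_{n-1}$, the base is a $\mathbb{P}^{m-1}$ of $u$-stable hyperplanes, the fiber is identified with ${\mathcal K}^{T'}$ via the parametrization of \ref{parameterization-components}, and the Poincar\'e polynomial multiplies by the palindromic factor $1+x^2+\cdots+x^{2(m-1)}$. The one step you flag but do not close --- that $\pi$ is well defined on all of ${\mathcal K}^T$ and not merely on the dense stratum ${\mathcal F}_u^T$ --- is genuinely the crux, and your formulation of the target (hyperplanes with $Y(u_{|V_{n-1}})=Y'$, a locally closed Jordan-type condition) makes it harder than necessary. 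The paper's resolution is to replace that condition by the \emph{linear} one: with $s$ the index of the last column, every $F\in{\mathcal F}_u^T$ satisfies $V_{n-1}\supset\ker u^{s-1}$, and since this is a closed condition it persists on the closure ${\mathcal K}^T$; the variety ${\mathcal H}$ of such hyperplanes is exactly the hyperplanes of $V/\ker u^{s-1}\cong\mathbb{C}^m$, hence is $\mathbb{P}^{m-1}$, every $H\in{\mathcal H}$ is automatically $u$-stable (as $u^s=0$ gives $u(V)\subset\ker u^{s-1}$), and a short rank computation shows $Y(u_{|H})=Y'$ for \emph{every} $H\in{\mathcal H}$, so no further degeneration is possible. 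You should make this reformulation explicit; with it, your remaining worry disappears. For the local triviality itself the paper simply invokes the proof of \cite[Theorem 2.1]{F-M}, whereas you give a self-contained argument via the transitive $Z(u)$-action on ${\mathcal H}$ and Zariski-local sections of $Z(u)\to Z(u)/\mathrm{Stab}(H_0)$; that is a legitimate alternative (the action does factor through $GL(V/\ker u^{s-1})$, whose map to $\mathbb{P}^{m-1}$ is Zariski-locally trivial), and arguably more transparent. One small caution: identify the fibers \emph{after} establishing local triviality (or argue that $\pi^{-1}(H)\cap{\mathcal F}_u^T$ is dense in $\pi^{-1}(H)$), since a priori the closure of ${\mathcal F}_u^T$ could meet a fiber in something larger than the closure of that fiber's open part. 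Your Leray--Hirsch step, with surjectivity of restriction supplied by cell decompositions in even degrees, is fine and matches what the paper implicitly uses.
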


\begin{proof}
Let $s$ be the index of the last column of $T$ and let ${\mathcal H}$ be the variety of hyperplanes
$H\subset V$ such that $H\supset \ker u^{s-1}$.
This is a projective variety which is naturally isomorphic to the variety of hyperplanes
of the space $V/\ker u^{s-1}$, hence ${\mathcal H}\cong \mathbb{P}^{m-1}$.
Recall that the component ${\mathcal K}^T$ is obtained as the Zariski closure of the subset ${\mathcal F}_u^T\subset {\mathcal F}_u$
(see Section \ref{parameterization-components}). Notice that every flag $F=(V_0,\ldots,V_n)\in{\mathcal F}_u^T$ satisfies
$V_{n-1}\supset \ker u^{s-1}$, hence $V_{n-1}\in{\mathcal H}$ whenever $F\in{\mathcal K}^T$.
Then, the fact that the map
\[\Phi:{\mathcal K}^T\rightarrow {\mathcal H},\ (V_0,\ldots,V_n)\mapsto V_{n-1}\]
is indeed a locally trivial fiber bundle of fiber isomorphic to the
component ${\mathcal K}^{T'}$ follows from the proof of \cite[Theorem 2.1]{F-M}.

The Poincar\'e polynomial of the component ${\mathcal K}^T$ is now obtained as the product of the Poincar\'e polynomial of
${\mathcal K}^{T'}$ and the one of $\mathbb{P}^{m-1}$, which is palindromic. This implies the last statement of the proposition.
\end{proof}

\medskip

Then, combining Theorem \ref{second-crit},
Sections \ref{classification}--\ref{example322} and the above proposition, we obtain the following

\begin{thm}
Whenever the Springer fiber ${\mathcal F}_u$ has a singular component,
it admits a component whose Poincar\'e polynomial is not palindromic.
\end{thm}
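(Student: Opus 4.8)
The plan is to reduce, through the fiber‑bundle mechanism of Proposition \ref{fiber-bundle}, to the two ``minimal'' shapes singled out in \ref{classification}, for each of which a component with non‑palindromic Poincar\'e polynomial is already available. Assume ${\mathcal F}_u$ has a singular component; by \ref{classification} the diagram $Y(u)$ contains $\yng(2,2,1,1)$ or $\yng(3,2,2)$ as a subdiagram, and these two possibilities are treated separately. Throughout I use the following immediate consequence of Proposition \ref{fiber-bundle}: if $T$ is a standard tableau whose largest entry $n$ occupies a box of the last column of its shape, and $T'=T_{n-1}$, then the Poincar\'e polynomial of ${\mathcal K}^T$ is palindromic if and only if that of ${\mathcal K}^{T'}$ is (indeed ${\mathcal K}^T$ is then a locally trivial bundle over a projective space with fiber ${\mathcal K}^{T'}$). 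Writing $(\mu_1,\ldots,\mu_s)$ for the column lengths of $Y(u)$, the key point to arrange is that in a chain $T=T_n\supset T_{n-1}\supset\cdots$ the box of the current maximal entry can be kept in the last column, which is possible precisely when the boxes still to be removed lie weakly below and to the right of the rest of the tableau.

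\emph{Case 1: $Y(u)\supset\yng(2,2,1,1)$}, so $\mu_1\geq 4$ and $\mu_2\geq 2$. I would argue by induction on the number of columns $s$ that ${\mathcal F}_u$ carries a component with non‑palindromic Poincar\'e polynomial. If $s=2$, then $Y(u)$ is two‑column with column lengths $\geq(4,2)$; by \ref{classification} it has a singular component, and by Theorem \ref{second-crit} that component's Poincar\'e polynomial is not palindromic. If $s\geq 3$, let $Y^-$ be the diagram with column lengths $(\mu_1,\ldots,\mu_{s-1})$, which still satisfies the hypotheses; the induction hypothesis gives a standard tableau $T'$ of shape $Y^-$, with $n^-:=|Y^-|$ boxes, such that ${\mathcal K}^{T'}$ has non‑palindromic Poincar\'e polynomial. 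I extend $T'$ to a standard tableau $T$ of shape $Y(u)$ by filling the new last column from top to bottom with $n^-+1,\ldots,n$; then for every $i>n^-$ the entry $i$ lies in the last column of the shape of $T_i$, so iterating the consequence of Proposition \ref{fiber-bundle} down to $T_{n^-}=T'$ shows that ${\mathcal K}^T$ has non‑palindromic Poincar\'e polynomial.

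\emph{Case 2: $Y(u)\supset\yng(3,2,2)$ but $Y(u)\not\supset\yng(2,2,1,1)$}. First I observe that these conditions force $\mu_1=\mu_2=3$ and $3\geq\mu_3\geq\cdots\geq\mu_s\geq 1$ with $s\geq 3$; equivalently, $Y(u)$ is obtained from the diagram $D$ of shape $(3,2,2)$ — which occupies exactly the first two columns to full height together with the top box of the third column — by adjoining the remaining parts of columns $3,4,\ldots,s$, all situated below and to the right of $D$. Starting from $T_0=\young(125,34,67)$, for which ${\mathcal K}^{T_0}$ has non‑palindromic Poincar\'e polynomial by \ref{example322}, I extend $T_0$ to a standard tableau $T$ of shape $Y(u)$ by filling $Y(u)\setminus D$ with $8,\ldots,n$ column by column, from left to right and from top to bottom within each column. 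Because $D$ sits in the upper‑left, for each $i>7$ the entry $i$ is the bottom box of the rightmost non‑empty column of the shape of $T_i$, hence lies in its last column; iterating Proposition \ref{fiber-bundle} for $i=n,n-1,\ldots,8$ reduces palindromicity of ${\mathcal K}^T$ to that of ${\mathcal K}^{T_7}={\mathcal K}^{T_0}$, and concludes.

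The routine points — that the extended arrays are genuinely standard tableaux, that each removed box is a corner of the current shape so that Proposition \ref{fiber-bundle} applies, and the elementary inequalities on column lengths underlying the case split — I would dispatch quickly. The one delicate point, and the place where the shape‑theoretic input of \ref{classification} is truly needed (especially the rigidity $\mu_1=\mu_2=3$ in Case 2), is the assertion that the maximal‑entry box stays in the last column all the way down the chain $T=T_n\supset\cdots$: if $Y(u)$ differed from the chosen seed in a column other than the last, Proposition \ref{fiber-bundle} would no longer link the two components, and the whole reduction would break down.
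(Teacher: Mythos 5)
Your proof is correct and follows essentially the same route as the paper's: iterate Proposition \ref{fiber-bundle} to reduce to the two-column case (handled by \ref{classification} together with Theorem \ref{second-crit}) and to the $(3,2,2)$ case (handled by Section \ref{example322}). The paper organizes this as a single downward induction on $n$, deleting the last box of the last column at each step, whereas you build the tableau upward from an explicit seed with a case split on which forbidden subdiagram occurs; unrolled, the two chains of applications of Proposition \ref{fiber-bundle} are the same.
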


\begin{proof}
We prove the theorem by induction on $n$, the number of boxes in $Y(u)$.
As an initialization, we suppose that $Y(u)$ has two columns, or three rows of lengths $(3,2,2)$,
then the property follows
either from Theorem \ref{second-crit} or from Section \ref{example322}, respectively.
Now assume that $Y(u)$ is neither of two-column nor of $(3,2,2)$ type.
Let $Y'$ be the subdiagram obtained from $Y(u)$ by deleting the last box of the last column,
and take $u'\in\mathrm{End}(\mathbb{C}^{n-1})$ nilpotent such that $Y(u')=Y'$.
Then, by the description in Section \ref{classification}, the Springer fiber ${\mathcal F}_{u'}$
also admits a singular component. By induction hypothesis, there is a standard tableau $T'$ of shape $Y'$
whose corresponding component ${\mathcal K}^{T'}\subset {\mathcal F}_{u'}$ has a non-palindromic Poincar\'e polynomial.
Let $T$ be the tableau of shape $Y(u)$ such that $T_{n-1}=T'$. By Proposition \ref{fiber-bundle},
the Poincar\'e polynomial of ${\mathcal K}^T\subset {\mathcal F}_u$ is also non-palindromic.
\end{proof}

\bigskip
 {\bf Acknowledgements.}  We would like to thank T.A. Springer for the inspiring correspondence.

\section*{Index of notation}

\noindent
\ \ref{introduction-1}\quad $V$, $n$, $u$, ${\mathcal F}$, ${\mathcal F}_u$

\noindent
\ \ref{parameterization-components}\quad $Y(u)$, $T_i$, ${\mathcal F}_u^T$, ${\mathcal K}^T$, $k$

\noindent
\ \ref{previous-criterion}\quad $\tau_0$, $F_\tau$, $X(\tau_0)$

\noindent
\ \ref{first-criterion}\quad $\mathbf{S}_n$, $\mathbf{S}_n^2$, $\mathbf{S}_n^2(k)$, $(i,j)\in\sigma$, $\sigma_T$, $c_T(i)$, $\tau^*(T)$

\noindent
\ \ref{second-criterion}\quad $H^m(X,\mathbb{Q})$

\noindent
\ \ref{plan}\quad $Z(u)$, $\overline{k}$

\noindent
\ \ref{definition-link-pattern}\quad $P_\sigma$, $P_\sigma^+$, $P_\sigma^-$, $P_\sigma^0$, $c(\sigma)$, $b(\sigma)$, $(i,j)\in[a,b]$,
$R_{a,b}(\sigma)$, $\sigma'\preceq \sigma$

\noindent
\ \ref{sect-proposition-orbits}\quad ${\mathcal Z}_\sigma$

\noindent
\ \ref{minimal-orbit}\quad $\sigma_0$, $\sigma_0(k)$

\noindent
\ \ref{lem-cell-decomposition}\quad $n_\mathrm{inv}(w)$

\noindent
\ \ref{proposition-Betti-orbits}\quad $d_0$, $n(T:m)$

\noindent
\ \ref{notation-link-patterns}\quad $\sigma_{i\rightarrow p}$, $\sigma_{i\leftrightarrows j}$, $N(\sigma)$, $P(\sigma)$

\end{document}